\documentclass[12pt]{scrartcl}
\usepackage[english]{babel}
\usepackage[utf8x]{inputenc}
\usepackage{amsfonts}
\usepackage{amsmath}
\usepackage{amssymb}
\usepackage{amsthm}
\usepackage{mathrsfs}
\usepackage[colorlinks=false,urlcolor=blue]{hyperref}
\usepackage{breakurl}
\usepackage{listings}
\usepackage{graphicx}
\usepackage{enumerate}
\usepackage{mathtools}
\usepackage{tensor}

\pagestyle{plain}
\addtokomafont{section}{\rmfamily}
\addtokomafont{subsection}{\rmfamily}
\addtokomafont{paragraph}{\rmfamily}
\usepackage{parskip}

\swapnumbers
\theoremstyle{plain}
\newtheorem{thm}[paragraph]{Theorem}
\newtheorem{lem}[paragraph]{Lemma}
\newtheorem{kor}[paragraph]{Corollary}
\newtheorem{prop}[paragraph]{Proposition}
\newtheorem{qu}[paragraph]{Question}
\theoremstyle{definition}
\newtheorem{defn}[paragraph]{Definition}
\newtheorem{bem}[paragraph]{Remark}

\setcounter{secnumdepth}{6}
\counterwithin{paragraph}{section}
\counterwithout{paragraph}{subsection}
\counterwithout{paragraph}{subsubsection}
\counterwithin{equation}{section}

\newcommand{\R}{\mathbb{R}}
\newcommand{\C}{\mathbb{C}}
\renewcommand{\H}{\mathbb{H}}
\newcommand{\CP}{\mathbb{CP}}
\newcommand{\Id}{\mathrm{Id}}
\newcommand{\Ric}{\mathrm{Ric}}
\newcommand{\scal}{\mathrm{scal}}
\newcommand{\vol}{\mathrm{vol}}
\newcommand{\Vol}{\operatorname{Vol}}
\newcommand{\Sym}{\operatorname{Sym}}
\newcommand{\tr}{\operatorname{tr}}
\newcommand{\End}{\operatorname{End}}
\newcommand{\Hom}{\operatorname{Hom}}
\newcommand{\Ad}{\operatorname{Ad}}
\newcommand{\ad}{\operatorname{ad}}
\newcommand{\Cas}{\mathrm{Cas}}
\newcommand{\Sy}{\mathscr{S}}
\newcommand{\Sf}{\mathfrak{S}}
\newcommand{\X}{\mathfrak{X}}
\newcommand{\U}{\operatorname{U}}
\newcommand{\SU}{\operatorname{SU}}
\newcommand{\su}{\mathfrak{su}}
\renewcommand{\u}{\mathfrak{u}}
\newcommand{\SO}{\operatorname{SO}}
\newcommand{\so}{\mathfrak{so}}
\newcommand{\Sp}{\operatorname{Sp}}
\renewcommand{\sp}{\mathfrak{sp}}
\newcommand{\Spin}{\operatorname{Spin}}
\newcommand{\GL}{\operatorname{GL}}
\newcommand{\gl}{\mathfrak{gl}}
\newcommand{\SL}{\operatorname{SL}}
\makeatletter
\@ifundefined{sl}{%
\newcommand{\sl}{\mathfrak{sl}}
}{
\renewcommand{\sl}{\mathfrak{sl}}
}
\makeatother
\renewcommand{\i}{\mathrm{i}}
\newcommand{\m}{\mathfrak{m}}
\newcommand{\h}{\mathfrak{h}}
\newcommand{\g}{\mathfrak{g}}
\renewcommand{\k}{\mathfrak{k}}
\renewcommand{\t}{\mathfrak{t}}
\newcommand{\TT}{\Sy^2_{\mathrm{tt}}}
\newcommand{\Isom}{\operatorname{Iso}}
\newcommand{\isom}{\mathfrak{iso}}
\newcommand{\Ltwoinprod}[2]{\mathchoice
  {\big(#1,#2\big)_{L^2}}%
  {(#1,#2)_{L^2}}%
  {(#1,#2)_{L^2}}%
  {(#1,#2)_{L^2}}%
}
\newcommand{\FN}[2]{[#1,#2]^{\mathrm{FN}}}
\newcommand{\quadric}{\mathscr{Q}}
\newcommand{\intprod}{\mathbin{\lrcorner}}
\newcommand{\Eop}{\mathrm{E}}
\newcommand{\rmE}{\mathrm{E}}
\newcommand{\fe}{\mathfrak{e}}
\newcommand{\rmF}{\mathrm{F}}
\newcommand{\ff}{\mathfrak{f}}
\newcommand{\rmS}{\mathrm{S}}
\newcommand{\Herm}{\mathfrak{H}}
\renewcommand{\AA}{\mathbb{A}}
\newcommand{\Univ}{\mathfrak{U}}
\newcommand{\Gr}{\mathrm{Gr}}
\newcommand{\Der}{\operatorname{\mathfrak{der}}}
\newcommand{\Str}{\operatorname{\mathfrak{str}}}
\renewcommand{\P}{\mathbb{P}}
\newcommand{\OO}{\mathbb{O}}
\newcommand{\SW}{\mathrm{S}}
\newcommand{\EH}{\mathcal{S}}
\newcommand{\CS}{\mathcal{C}}
\newcommand{\Diff}{\mathrm{Diff}}

\title{\rmfamily Sandwich operators and Einstein deformations of compact symmetric spaces related to Jordan algebras}
\author{Stuart James Hall\footnote{School of Mathematics, Statistics, and Physics,  Newcastle University, Newcastle upon Tyne, NE1 7RU, UK. Email: \texttt{stuart.hall@ncl.ac.uk}}, Paul Schwahn\footnote{Universidade Estadual de Campinas, IMECC, Rua Sérgio Buarque de Holanda 651, 13083-859 Campinas-SP, Brazil. Email: \texttt{schwahn@ime.unicamp.br}}, Uwe Semmelmann\footnote{Institut f\"ur Geometrie und Topologie, Fachbereich Mathematik, Universit\"at Stuttgart, Pfaffenwaldring 57, 70569 Stuttgart, Germany. Email: \texttt{uwe.semmelmann@mathematik.uni-stuttgart.de}}}

\begin{document}

\maketitle

\begin{abstract}
\noindent
We study the deformability of the symmetric Einstein metrics on the spaces $\SU(n)/\SO(n)$ and $\SU(2n)/\Sp(n)$, thereby concluding the problem to second order for all irreducible symmetric spaces. The obstruction integrals are calculated from invariant polynomials on certain Lie algebra representations. To aid the computation, we develop what we term \emph{sandwich operators} for compact Lie algebras and relate them to quadratic Casimir operators. We also explain the source of the infinitesimal Einstein deformations of irreducible symmetric spaces, except for the complex Grassmannians, by exploring their relation to simple Jordan algebras. Finally, we prove the nonlinear instability of most of the infinitesimally deformable irreducible compact symmetric spaces.

\medskip

\noindent{\textbf{MSC} (2020): 17B20, 17B60, 53C24, 53C25, 53C35}

\medskip

\noindent{\textbf{Keywords}: Einstein deformations, Rigidity, Compact symmetric spaces, Jordan algebras, Casimir operators, Ricci solitons, Stability}
\end{abstract}

\section{Introduction}
\label{sec:intro}
\subsection{Overview and main results}

In the early 1980s, in the papers \cite{Koiso80}, \cite{Koiso82}, and \cite{Koiso83}, Koiso made a systematic study of the deformation theory of compact Einstein manifolds. He proved general structural results about the moduli space of Einstein metrics and applied this theory to provide a comprehensive account of deformability in the case of symmetric spaces. He showed that, in almost all cases, the canonical Einstein metric on an irreducible symmetric space does not admit deformations to first order and so the metric is isolated in the moduli space (isolated metrics are referred to as \emph{rigid}). Koiso found that there are five kinds of symmetric spaces that do admit infinitesimal Einstein deformations (see Proposition~\ref{symmetricied} for the precise statement); they are:
\[
\SU(n)/\SO(n), \  \SU(n), \ \SU(2n)/\Sp(n), \ \SU(p+q)/\mathrm{S}(\U(p)\times \U(q)), \ \mathrm{and} \ \mathrm{E}_{6}/\mathrm{F}_{4}.
\]
Koiso found an obstruction to integrating infinitesimal deformations to second order but left open the problem of calculating his obstruction on all but the space $\mathrm{E}_{6}/\mathrm{F}_{4}$. This problem has recently been successfully attacked for $\SU(n)$ \cite{BHMW} and for the complex Grassmannians  $\SU(p+q)/\mathrm{S}(\U(p)\times \U(q))$ \cite{OddGr}. In this article we investigate the second order deformations of the remaining two families -- $\SU({n)}/\SO(n)$ and $\SU(2n)/\Sp(n)$.

\begin{thm}
	\label{thm:maintheorem}
	Let $n\geq3$, and let $(M=G/K,g)$ be one of the Riemannian symmetric spaces $\SU(n)/\SO(n)$ or $\SU(2n)/\Sp(n)$.
	\begin{enumerate}[\upshape(i)]
		\item The set of infinitesimal Einstein deformations in $\varepsilon(g)$ which are integrable to second order corresponds to the variety
		\[\quadric=\left\{X\in\g\,\middle|\,X^2=\frac{\tr(X^2)}{m}I_m\right\}\]
		under the $G$-equivariant isomorphism $\varepsilon(g)\cong\g=\su(m)$, where $m=n$ or $m=2n$.
		\item If $n$ is odd, all infinitesimal Einstein deformations of $\SU(n)/\SO(n)$ are obstructed to second order, and hence the metric $g$ is rigid.
	\end{enumerate}
\end{thm}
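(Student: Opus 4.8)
The plan is to deduce part~(ii) from part~(i) by an elementary spectral computation together with the local structure theory of the Einstein moduli space. By part~(i), the infinitesimal Einstein deformations of $(M,g)=\SU(n)/\SO(n)$ that integrate to second order correspond, under the isomorphism $\varepsilon(g)\cong\su(n)$, precisely to $\quadric=\{X\in\su(n)\mid X^2=\tfrac{\tr(X^2)}{n}I_n\}$. Since that isomorphism already exhibits $\varepsilon(g)\cong\su(n)\neq 0$, the assertion of~(ii) reduces to two things: that $\quadric=\{0\}$ when $n$ is odd, and that an Einstein metric with no nonzero second-order-integrable deformation is rigid.

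First I would determine $\quadric$ for odd $n$. Let $X\in\su(n)$ satisfy $X^2=c\,I_n$ with $c:=\tr(X^2)/n$. Being skew-Hermitian, $X$ is unitarily diagonalizable with eigenvalues $\i\lambda_1,\dots,\i\lambda_n$, where $\lambda_j\in\R$ and $\sum_j\lambda_j=0$ by tracelessness. Then $\tr(X^2)=-\sum_j\lambda_j^2\leq 0$, with equality exactly when $X=0$, so for $X\neq 0$ we have $c<0$. From $X^2=c\,I_n$ we read off $-\lambda_j^2=c$ for every $j$, hence each $\lambda_j=\pm\mu$ with $\mu:=\sqrt{-c}>0$; writing $p$ and $q$ for the multiplicities of $+\mu$ and $-\mu$ among the $\lambda_j$, we get $p+q=n$ and $0=\sum_j\lambda_j=(p-q)\mu$, so $p=q$ and $n=2p$ is even, a contradiction. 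Hence $\quadric=\{0\}$ when $n$ is odd. (For $n$ even the matrix $\i\mu\cdot\diag(I_{n/2},-I_{n/2})$ lies in $\quadric\setminus\{0\}$, so the parity hypothesis is indispensable; this is exactly where the $\SU(2n)/\Sp(n)$ story and the Jordan-algebraic picture reappear.)

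Finally I would conclude rigidity. With $\quadric=\{0\}$, part~(i) shows that the only infinitesimal Einstein deformation of $g$ integrable to second order is $0$; a fortiori, no nonzero infinitesimal Einstein deformation is integrable to all orders, so $g$ is rigid by Koiso's structure theory of the Einstein moduli space \cite{Koiso82,Koiso83}. Concretely: a neighbourhood of $[g]$ in the moduli space is the zero set, modulo a finite group, of a real-analytic map $\Phi$ on $\varepsilon(g)$ with vanishing linear part whose homogeneous quadratic part $\Phi_2$ is the second-order obstruction, so $\Phi_2^{-1}(0)=\quadric=\{0\}$; homogeneity then yields $\|\Phi_2(h)\|\geq c\|h\|^2$ on $\varepsilon(g)$, and since the remaining terms of $\Phi$ are $O(\|h\|^3)$ we get $\Phi(h)\neq 0$ for all sufficiently small $h\neq 0$, i.e.\ $[g]$ is isolated. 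All the real work sits in part~(i); within~(ii) the only point needing care is the bookkeeping that identifies ``integrable to second order'' with the vanishing of precisely this quadratic part, so that its triviality genuinely propagates to the full analytic obstruction.
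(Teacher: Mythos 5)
Your argument establishes only part~(ii) \emph{conditionally on} part~(i), and leaves part~(i) --- which is the substantive claim of the theorem --- entirely unproven. The identification of the second-order-integrable deformations with $\quadric$ is not a formality: it requires showing that Koiso's obstruction polynomial $\Psi(h)=2\Ltwoinprod{\Eop_g''(h,h)}{h}$, viewed as an element of the one-dimensional space $(\Sym^3\g^\ast)^G$ for $\g=\su(m)$, is a \emph{nonzero} multiple of the cubic invariant $P(X)=2\i\tr(X^3)$, and then that the critical locus of $P$ (equivalently, by Proposition~\ref{critical}, the locus of second-order integrability) is exactly $\quadric$. The nonvanishing is where essentially all of the work in the paper sits: one must compute $\langle 3R-EQ,P\rangle$ by decomposing $P=P_{\m\m\m}+P_{\k\k\m}$ along the Cartan decomposition, evaluating the norms $|P_{\m\m\m}|^2$, $|P_{\k\k\m}|^2$ and the proportionality constants $\kappa$, $\lambda$ via Casimir and sandwich constants (Sections~\ref{sec:Section4}--\ref{sec:Section5}), arriving at the explicit nonzero coefficients of Corollary~\ref{psiandp}. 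If that coefficient happened to vanish --- as it does, for instance, for $\rmE_6/\rmF_4$ --- then \emph{every} IED would be unobstructed to second order and both parts of the theorem would fail as stated. Your proposal contains no route to this computation, so the gap is not bookkeeping but the main content.

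That said, the portion you do prove is correct and matches the paper's logic for the final step. Your spectral argument that $\quadric=\{0\}$ for odd $m$ (eigenvalues $\pm\i\mu$ forced into equal multiplicities by tracelessness) is a clean, self-contained proof of the fact the paper imports from \cite[Prop.~6.4]{OddGrArxiv}, and your passage from ``no nonzero IED integrable to second order'' to rigidity via Koiso's analytic structure of the moduli space --- including the quantitative lower bound $\|\Phi_2(h)\|\geq c\|h\|^2$ from compactness of the unit sphere --- is a reasonable expansion of what the paper asserts with a citation. To complete the proof you would still need to supply part~(i): the reduction of $\Psi$ to Lie-algebraic data (Lemma~\ref{psipol}), the evaluation of the resulting invariant polynomials, and the verification that the critical locus of $P$ on $\su(m)$ is $\{X\mid X^2=\tfrac{\tr(X^2)}{m}I_m\}$ (which follows from $dP_X(Y)=6\i\tr(X^2Y)$ vanishing for all $Y\in\su(m)$ iff $X^2$ is a multiple of the identity).
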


By combining Theorem~\ref{thm:maintheorem}  with the work of Koiso \cite{Koiso82} and the results of the recent papers, \cite{BHMW} and \cite{OddGr}, we now have a complete understanding of integrability to second order of all of the deformable spaces on Koiso's list and thus a complete understanding of deformability to second order for \emph{all} compact irreducible symmetric spaces.

For the list of five infinitesimally deformable irreducible symmetric spaces, Koiso proved that the space of infinitesimal deformations $\varepsilon(g)$ is equivariantly isomorphic to the Lie algebra of Killing fields. If we write $M=G/K$, where $G$ is the isometry group, then $ \varepsilon(g) \cong \mathfrak{g}$, so working out Koiso's obstruction to integrability can be reduced to a calculation on the Lie algebra $\g$. The obstruction is closely related to various polynomials on $\g$, given by contracting certain products of traces, e.g.
\[-\sum_{i,j,k}\tr((X_iX_j+X_jX_i)X_k)^2\]
for an orthonormal basis $(X_i)$ of $\g$ (see Proposition~\ref{p0norm}), but one may also contract only over a subspace or subalgebra of $\g$ (see Section~\ref{sec:Section5}). Performing calculations with these polynomials involves the manipulation of certain Casimir-type objects which we term \emph{sandwich operators} and which we introduce in Section~\ref{sec:Section2}. These might be of independent interest for the simplification of similar expressions.

To provide a unified framework for dealing with both $\SU(n)/\SO(n)$ and $\SU(2n)/\Sp(n)$ it is useful to view them as symmetric spaces associated to \textit{formally real simple Jordan algebras}. In fact there is a very strong link between Jordan algebras and the infinitesimally deformable spaces appearing on Koiso's list, which we also hope is of independent interest; we elucidate this link in Section \ref{sec:Section3} and suggest topics for further investigation in this vein.

The question of whether an Einstein metric is infinitesimally deformable is related to the idea of stability for the Einstein--Hilbert action. As Einstein metrics are critical points for the action, a Einstein metric is said to be \textit{stable} if it is a local maximum when variations are restricted to fixed volume constant scalar curvature metrics. Thus stability can be investigated by considering the higher order derivatives of the action; these derivatives involve similar quantities to those found when considering infinitesimal deformations of Einstein metrics.  There is also a relationship between the Einstein--Hilbert stability of a metric and its stability as a fixed point of the Ricci flow. Such issues are discussed in Section \ref{sec:Section6}.

\subsection{Infinitesimal deformations of Einstein metrics and obstructions to integrability}

In this subsection, we give a little more detail regarding the important concepts we will use in the rest of the paper. For a comprehensive treatment of the deformation theory for Einstein metrics we refer the reader to \cite{besse} and \cite{NS23}.

Given a compact, oriented manifold $M^{n}$, the Einstein metrics (normalised to have unit volume) can be thought of as the zeroes of the Einstein operator
\[
\Eop(g) := \Ric(g)-\frac{\int_M\scal_g\,\vol_g}{n}g.
\]  
Infinitesimal deformations are in essence the critical points of this operator.

\begin{defn}
Let $(M^n,g)$ be an Einstein manifold, that is ${\mathrm{Ric}(g)=E g}$ with $E \in\R$. An \textit{infinitesimal Einstein deformation} (IED) is a section $h\in\Sy^2(M)=\Gamma(\Sym^2T^{\ast}M)$ satisfying the following conditions:
\begin{align}
\tr(h) &= 0, \label{eq:trace_free}\\
\mathrm{div}(h)&=0, \label{eq:div_free}\\
\nabla^\ast\nabla h -2\mathring{R}h &= 0, \label{eq:Lin_Ein}
\end{align}
where $\nabla^\ast\nabla$ is the connection Laplacian of the Levi-Civita connection, and $\mathring{R}$ is the \emph{curvature operator of the second kind} acting on symmetric 2-tensors. We denote the space of IED by $\varepsilon(g)$.
\end{defn}

The conditions \eqref{eq:trace_free} and \eqref{eq:div_free} correspond to eliminating conformal scaling and the action by the diffeomorphism group; the condition \eqref{eq:Lin_Ein} is then the equation $\Eop_g'(h)=0$. Tensors satisfying \eqref{eq:trace_free} and \eqref{eq:div_free} are known as \textit{transverse trace-free} and the space of such tensors is denoted $\TT(M)$. An IED $h\in\varepsilon(g)$ can be thought of as the first order approximation to a curve of Einstein metrics passing though $g$. Koiso \cite{Koiso82} proved that $h$ can be integrated to produce a second order approximation to a curve of Einstein metrics if and only if
\[
\Eop_{g}''(h,h) \perp_{L^{2}} \varepsilon(g),
\]
where we are using the $L^{2}$ inner product induced by the metric $g$. In particular, an obstruction to the integrability of $h\in \varepsilon(g)$ is the non-vanishing of the integral 
\[
\Psi(h) :=2\Ltwoinprod{\Eop_{g}''(h,h)}{h}.
\]
In his original treatment in \cite{Koiso82}, Koiso gave an expression for $\Psi$ in terms of local coordinates. Recently, Nagy and the third author \cite{NS23} have reinterpreted this expression in terms of the Frölicher--Nijenhuis bracket to provide an expression that is easier to work with (we refer the reader to Section \ref{sec:Section4} for more detail).

As mentioned previously, for the infinitesimally deformable symmetric spaces $G/K$, the space of deformations $\varepsilon(g)$ is $G$-equivariantly isomorphic to the Lie algebra $\g$. Koiso's obstruction can be seen as an element of $\Hom_{G}(\Sym^2\g,\g)$ and as this space is 1-dimensional for $G=\SU(n)$, $n\geq 3$, one really only needs to show that polynomial $\Psi$, which can be seen as an element of $\Sym^{3}\g^{\ast}$, does not vanish. The isomorphism between $\varepsilon(g)$ and $\mathfrak{su(n)}$ is described in Section \ref{sec:Section3} and then the calculation of the obstruction integral $\Psi$  is detailed in Sections \ref{sec:Section4} and \ref{sec:Section5}.

Let $M$ be a compact, orientable manifold. The Einstein--Hilbert action $\EH$ is the functional on the space of all Riemannian metrics on $M$ given by
\[
\EH(g) = \int_{M}\scal_g\,\vol_g.
\]
Restricted to the set of metrics of a fixed volume $c>0$, its critical points are precisely the Einstein metrics. It is known that, in general, Einstein metrics are never local maxima nor minima; however, an Einstein metric $g$ may be a local maximum when $\EH$ is further restricted to the set $\CS_{g}$ of constant scalar curvature metrics with the same volume as $g$. Such a metric is called \emph{$\EH$-stable} (or just \emph{stable}).

The stability of $g$ can be investigated by considering the second derivative of $\EH$ at $g$. Since $T_g\CS_g=\TT(M)\oplus T_g\Diff(M)g$, and the functional $\EH$ is invariant under diffeomorphisms, a sufficient condition for $\EH$-stability is \emph{linear stability}: namely
\[\EH_g''(h,h)<0\qquad\text{for all }h\in\TT(M).\]
However, an Einstein metric which is only \emph{linearly semistable} ($\EH_g''\leq0$ on $\TT(M)$) is not necessarily $\EH$-stable; within the null directions of $\EH_g''$ on $\TT(M)$, which are precisely $\varepsilon(g)$, higher order instabilities may hide.

The linear stability of irreducible symmetric spaces of compact type was settled in \cite{Koiso80,S22,SW22}. All of the spaces that admit IEDs (see Proposition~\ref{symmetricied}) turn out to be linearly semistable, but it was not clear whether they are also $\EH$-stable; they may permit higher order \emph{nonlinear} instabilities detectable only from higher variations of $\EH$. In this article we are able to demonstrate that the spaces $\SU(n)/\SO(n)$ and $\SU(2n)/\Sp(n)$ are unstable (Theorem~\ref{thm:Sunstable}).

There is another functional, also with motivations coming from geometric analysis, whose critical points include Einstein metrics of positive scalar curvature.  For a general compact Riemannian manifold $(M^n,g)$, Perelman's $\nu$-functional \cite{Per}, also called the \emph{shrinker entropy}, is given by
\[
\nu(g)  = \inf \left\{ \mathcal{W}(g,f,\tau)\,\middle|\, f\in C^{\infty}(M), \ \tau>0, \ (4\pi\tau)^{-\frac{n}{2}}\int_{M}e^{-f}\,\vol_g =  1 \right \},
\]
where
\[
\mathcal{W}(g,f,\tau) = \int_{M}\left(\tau (\mathrm{scal}_{g}+|\nabla f|^{2})+f-n\right)e^{-f}\,\vol_g.
\]
For any metric $g$, the infimum is always achieved by some pair $(f,\tau)$.  Perelman \cite{Per} proved that the critical points of $\nu$ are triples $(g,f,\tau)$ satisfying
\[
\Ric(g)+\nabla^{2}f=\frac{1}{2\tau}g.
\]
Such metrics are known as \emph{gradient Ricci solitons} and we can see they include Einstein metrics in the case that the function $f$ is constant (note here that the Einstein constant is $E=\frac{1}{2\tau}$). Perelman proved that the entropy is always increasing along the Ricci flow and thus perturbations of the metric which increase the entropy must necessarily be destabilizing. The second \cite{caohamilm,caozhu} and third \cite{kroenckeeinstein} variation of Perelman's entropy $\nu$ have been calculated, and they are closely related to the variations of the Einstein--Hilbert action. We end the article by discussing this relation in detail and giving a new proof of the instability of $\SU(n)/\SO(n)$ and $\SU(2n)/\Sp(n)$ as Ricci solitons.

\section{Operators associated to compact Lie algebras}
\label{sec:Section2}
\subsection{Casimir operators}

For any Lie algebra $\g$, denote with $\Univ\g$ its universal enveloping algebra. Elements of the center $Z(\Univ\g)$ are called \emph{Casimir elements}. Let $\pi: \g\to\gl(V)$ be a representation of $\g$, and denote its unique extension to a representation of $\Univ\g$ also with $\pi$. Then any Casimir element $C\in Z(\Univ\g)$ gives rise to an endomorphism $\pi(C)$ which intertwines with the representation of $\g$, i.e. $\pi(C)\in\End_\g V$. Such an endomorphism is called a \emph{Casimir operator} of $\g$ on $V$.

Suppose now that $\g$ is a compact Lie algebra, that is, the Lie algebra of a compact Lie group. Then there exists an $\ad(\g)$-invariant inner product on $\g$, which we fix (the existence of such an inner product is equivalent to $\g$ being compact). The inner product induces a particular Casimir element $C_2$ of degree two, often called \emph{the quadratic Casimir element} or just \emph{the} Casimir element. For any orthonormal basis $(X_i)$ of $\g$, it is given by
\[C_2=-\sum_iX_i^2.\]
In fact, if $\g$ is simple, $C_2$ is (up to scaling) the unique Casimir element of degree two. The associated operator (\emph{the quadratic Casimir operator} or just \emph{the} Casimir operator) on a representation $\pi: \g\to\gl(V)$ shall be denoted by
\[\Cas^\g_V=\pi(C_2)=-\sum_i\pi(X_i)^2.\]
More generally, we may define the quadratic Casimir element for any Lie algebra $\g$ which admits a \emph{nondegenerate} $\ad(\g)$-invariant bilinear form, for example if $\g$ is a \emph{reductive} Lie algebra. We are however primarily interested in the compact case, where the quadratic Casimir operator is nonnegative.

If the representation $V$ is irreducible, then Schur's Lemma implies that every Casimir operator on $V$ acts as multiplication by some constant. Assume in addition that $\g$ is semisimple, and let $\t\subset\g$ be a maximal torus. After choosing a set $R_+\subset\t$ of positive roots, we can associate to each irreducible representation of $\g$ its \emph{highest weight} $\lambda\in\t^\ast$ -- and conversely, each dominant integral weight $\lambda\in\t^\ast$ is the highest weight of a (up to equivalence) unique irreducible representation $V_\lambda$. Now a famous formula by Freudenthal tells us that the value of the quadratic Casimir constant on $V_\lambda$ is given by
\begin{equation}
 \Cas^\g_\lambda=\langle\lambda+2\rho_\g,\lambda\rangle,
 \label{eq:freudenthal}
\end{equation}
where $\rho_\g=\frac12\sum_{\alpha\in R_+}\alpha$ is the half-sum of positive roots, and the inner product on $\t^\ast$ is induced from that of $\g$.


\subsection{Invariant polynomials}

Take $\g$ to be a compact Lie algebra with fixed invariant inner product. We freely identify the symmetric algebra $\Sym\g^\ast$ with the algebra of polynomials $\R[\g]$ by
\[P(X):=P(X,\ldots,X),\qquad P\in\Sym\g^\ast,\ X\in\g,\]
and conversely by polarization.

If $G$ is a compact Lie group with Lie algebra $\g$, consider the subalgebra ${(\Sym\g^\ast)^G\cong\R[\g]^G}$ of $G$-invariant polynomials. Every homogeneous polynomial $P\in(\Sym^k\g^\ast)^G$ defines a Casimir element
\[\sum_{i_1,\ldots,i_k}P(X_{i_1},\ldots,X_{i_k})X_{i_1}\cdots X_{i_k}\in Z(\Univ\g),\]
where again $(X_i)$ denotes an orthonormal basis of $\g$. This prescription $\Sym\g^\ast\to Z(\Univ\g)$ is an isomorphism not of algebras, but of filtered vector spaces.

For simple Lie algebras $\g$, the polynomial algebras $\R[\g]^G$ are well understood; they can be  described as the free polynomial algebra on a set of homogeneous generators in certain degrees; the number of generators being equal to the rank of $\g$. For example for $G=\SU(n)$, the generators have degree $k=0,2,3,\ldots,n$ and are given by
\begin{equation}
X\mapsto\i^k\tr(X^k),\qquad X\in\g.
\label{eq:tracepowers}
\end{equation}

It will become necessary later to take inner products and norms of polynomials. If $(V,\langle\,\cdot\,,\cdot\,\rangle)$ is a Euclidean vector space, we shall take the inner product of $P,Q\in\Sym^kV^\ast$ to be
\begin{equation}
\label{eq:inprod}
\langle P,Q\rangle=\sum_{i_1,\ldots,i_k}P(X_{i_1},\ldots,X_{i_k})Q(X_{i_1},\ldots,X_{i_k}),
\end{equation}
where $(X_i)$ is an orthonormal basis of $V$. If the inner product on $V$ is invariant under the action of some group, then the natural action on $\Sym^kV^\ast$ will preserve its inner product as well.

\subsection{Sandwich operators}
\label{sec:sandwich}

We now define \emph{sandwich operators} which are of tremendous help when evaluating expressions involving contractions of trace forms and matrix products, the likes of which occur frequently in the calculations in \ref{sec:Section5}. Without naming them thus, sandwich operators have already been employed to prove the rigidity of the complex Grassmannians \cite[Def.~3.16]{OddGrArxiv}.

Let $\g$ be a a compact Lie algebra with an invariant inner product, $(X_{i})$ an orthonormal basis, and $\pi: \g\to\gl(V)$ a representation.\footnote{We shall write both $\gl(V)$ and $\End V$, depending on whether want to emphasize the Lie algebra structure or the associative algebra structure.}

\begin{defn}
 The \emph{sandwich operator} of $\g$ on $\End V$ is defined as
 \[\SW^\g_{\End V}: \End V\to\End V,\qquad A\mapsto\sum_i\pi(X_i)\,A\,\pi(X_i).\]
\end{defn}

Sandwich operators are closely related to quadratic Casimir operators:

\begin{lem}
For all $A\in\End V$, we have the identity
\[\SW^\g_{\End V}(A)=\frac{1}{2}(\Cas^\g_{\End V}(A)-\Cas^\g_V\circ A-A\circ\Cas^\g_V).\]
In particular, $\SW^\g_{\End V}\in\End_\g\End V$.
\end{lem}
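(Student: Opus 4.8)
The plan is to expand the quadratic Casimir operator on the tensor product module $\End V \cong V \otimes V^\ast$ and recognize the sandwich operator as the cross term. Recall that for any representation $\pi$ of $\g$ on $V$, the induced representation on $\End V$ is given by $\rho(X)(A) = \pi(X)\circ A - A\circ\pi(X) = [\pi(X),A]$. I would therefore start by writing out
\[
\Cas^\g_{\End V}(A) = -\sum_i \rho(X_i)^2(A) = -\sum_i [\pi(X_i),[\pi(X_i),A]].
\]
The key step is to expand the nested commutator: $[\pi(X_i),[\pi(X_i),A]] = \pi(X_i)^2 A - \pi(X_i)A\pi(X_i) - \pi(X_i)A\pi(X_i) + A\pi(X_i)^2 = \pi(X_i)^2 A - 2\pi(X_i)A\pi(X_i) + A\pi(X_i)^2$. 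Summing over $i$ and using the definitions $\Cas^\g_V = -\sum_i \pi(X_i)^2$ and $\SW^\g_{\End V}(A) = \sum_i \pi(X_i)A\pi(X_i)$ immediately yields
\[
\Cas^\g_{\End V}(A) = -\Cas^\g_V\circ A + 2\,\SW^\g_{\End V}(A) - A\circ\Cas^\g_V,
\]
wait---sign bookkeeping: $-\sum_i(\pi(X_i)^2 A + A\pi(X_i)^2) = \Cas^\g_V\circ A + A\circ\Cas^\g_V$, and $-\sum_i(-2\pi(X_i)A\pi(X_i)) = 2\,\SW^\g_{\End V}(A)$, so $\Cas^\g_{\End V}(A) = \Cas^\g_V\circ A + A\circ\Cas^\g_V + 2\,\SW^\g_{\End V}(A)$. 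Rearranging gives exactly the claimed identity.

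For the final sentence, I would argue as follows. The Casimir operator $\Cas^\g_{\End V}$ lies in $\End_\g\End V$ by the general discussion of Casimir operators in the previous subsection (it is $\pi(C_2)$ for the representation on $\End V$, hence intertwines). Likewise, left and right multiplication by $\Cas^\g_V$ on $\End V$ are $\g$-equivariant: since $\Cas^\g_V\in\End_\g V$ commutes with every $\pi(X)$, the maps $A\mapsto \Cas^\g_V\circ A$ and $A\mapsto A\circ\Cas^\g_V$ commute with $\rho(X)(A)=[\pi(X),A]$. Being an $\R$-linear combination of three elements of $\End_\g\End V$, the sandwich operator $\SW^\g_{\End V}$ is itself $\g$-equivariant.

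I do not anticipate a genuine obstacle here; the only thing to be careful about is the sign convention and the precise form of the induced action on $\End V$ (whether one writes it as $[\pi(X),\cdot]$ or with the opposite sign on the right factor, which does not affect the squared operator). The computation is a one-line expansion of a double bracket, and the equivariance claim is immediate once one notes that all three summands on the right-hand side are manifestly $\g$-equivariant.
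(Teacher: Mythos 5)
Your proposal is correct and follows exactly the paper's own argument: expand the double commutator $-\sum_i[\pi(X_i),[\pi(X_i),A]]$, identify the cross term as $2\,\SW^\g_{\End V}(A)$ and the outer terms as $\Cas^\g_V\circ A + A\circ\Cas^\g_V$, then deduce equivariance from that of $\Cas^\g_{\End V}$ and of pre-/post-composition with $\Cas^\g_V$. Nothing to add.
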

\begin{proof}
Since $\g$ acts on $\End V$ by the commutator, we may calculate
\begin{align*}
 \Cas^\g_{\End V}(A)&=-\sum_i[\pi(X_i),[\pi(X_i),A]]\\
 &=-\sum_i(\pi(X_i)^2A-2\pi(X_i)A\pi(X_i)+A\pi(X_i)^2)\\
 &=2\SW^\g_{\End V}(A)+\Cas^\g_V\circ A+A\circ\Cas^\g_V.
\end{align*}
The intertwining property now follows from the fact that $\Cas^\g_{\End V}$ intertwines with $\g$, and one checks that post- or precomposition with $\Cas^\g_V$ does so as well.
\end{proof}

For irreducible representations $V$, the Casimir operator $\Cas^\g_V$ acts as multiplication by a constant. Hence we may abuse notation and write the following:
\begin{kor}
\label{sandwichonirr}
 If $V$ is irreducible, then
 \[\SW^\g_{\End V}=\frac{1}{2}\Cas^\g_{\End V}-\Cas^\g_V\cdot\Id.\]
\end{kor}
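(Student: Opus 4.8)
The plan is to read this off directly from the preceding lemma. That lemma already gives, for every $A\in\End V$,
\[\SW^\g_{\End V}(A)=\frac{1}{2}\bigl(\Cas^\g_{\End V}(A)-\Cas^\g_V\circ A-A\circ\Cas^\g_V\bigr),\]
so the only work left is to simplify the two terms $\Cas^\g_V\circ A$ and $A\circ\Cas^\g_V$ under the irreducibility hypothesis on $V$.

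Here I would invoke Schur's lemma: since $\Cas^\g_V\in\End_\g V$ and $V$ is irreducible, $\Cas^\g_V$ must be multiplication by a scalar, namely the Casimir constant of $V$ (denoted again $\Cas^\g_V$ under the abuse of notation introduced just before the statement). Consequently $\Cas^\g_V\circ A=\Cas^\g_V\cdot A=A\circ\Cas^\g_V$ for every $A\in\End V$, and substituting this into the displayed identity yields
\[\SW^\g_{\End V}(A)=\frac{1}{2}\Cas^\g_{\End V}(A)-\Cas^\g_V\cdot A,\]
which is precisely the asserted operator identity on $\End V$.

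There is no real obstacle: the corollary is a formal consequence of the lemma together with Schur's lemma, and the intertwining statement $\SW^\g_{\End V}\in\End_\g\End V$ is already part of the lemma. The only subtlety worth flagging in the write-up is the notational overloading of the symbol $\Cas^\g_V$, which in the corollary denotes the scalar eigenvalue rather than the operator on $V$ — exactly the convention announced in the sentence immediately preceding the statement.
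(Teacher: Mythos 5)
Your proposal is correct and follows exactly the paper's route: the corollary is deduced from the preceding lemma by observing, via Schur's lemma, that $\Cas^\g_V$ acts as a scalar on the irreducible $V$, so that $\Cas^\g_V\circ A+A\circ\Cas^\g_V=2\Cas^\g_V\cdot A$. Your remark about the notational overloading of $\Cas^\g_V$ matches the convention the paper announces just before the statement.
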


If $W\subset\End V$ is an irreducible subrepresentation and no confusion is possible, we may denote with $\SW^\g_W$ at the same time the restriction $\SW^\g_{\End V}\big|_W$ and its only eigenvalue, which Corollary~\ref{sandwichonirr} and Freudenthal's formula \eqref{eq:freudenthal} now allows us to evaluate. We will see several applications of this in the sequel.


\subsection{The cubic invariant of $\su(m)$}
\label{sec:suncubic}

We now let $G=\SU(m)$, and let the inner product on $\su(m)$ be the negative trace form,
\[\langle X,Y\rangle=-\tr(XY),\qquad X,Y\in\su(m).\]
As an example and as a preparation for what follows, let us discuss the invariant cubic form $P\in(\Sym^3\g^\ast)^G$, which is (up to a factor) uniquely given by
\begin{equation}
\label{eq:suncubic}
P(X)=2\i\tr(X^3),
\end{equation}
or in polarized form,
\[P(X,Y,Z)=\i\tr(XYZ+YXZ)=\i\tr(\{X,Y\}Z)\]
for $X,Y,Z\in\g$, where $\{X,Y\}=XY+YX$ is the \emph{anticommutator} of matrices.

As a first application of sandwich operators, we are able to calculate the norm of the invariant $P$. For this we need to record the Casimir and sandwich constants on the relevant representations.

\begin{lem}
\label{cassun}
 The quadratic Casimir constants of $\su(m)$ on the fundamental and adjoint representation are
 \[\Cas^\g_{\C^m}=\frac{m^2-1}{m}\quad \mathrm{and} \quad \Cas^\g_\g=2m.\]
\end{lem}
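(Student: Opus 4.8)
The plan is to compute both Casimir constants directly from Freudenthal's formula \eqref{eq:freudenthal}, since both the fundamental representation $\C^m$ and the adjoint representation $\g=\su(m)$ are irreducible. First I would fix the standard setup for $\su(m)$: realize the maximal torus $\t$ as the diagonal traceless matrices, use the coordinates $\varepsilon_1,\dots,\varepsilon_m$ with $\sum_i\varepsilon_i=0$, and take the positive roots to be $\varepsilon_i-\varepsilon_j$ for $i<j$. The one subtlety worth stating carefully is the normalization: since the inner product on $\g$ is $\langle X,Y\rangle=-\tr(XY)$, the induced inner product on $\t^\ast$ must be computed accordingly, and one gets $\langle\varepsilon_i,\varepsilon_j\rangle=\delta_{ij}-\frac1m$ (the $-\frac1m$ coming from the tracelessness constraint). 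Getting this normalization right is the only place where care is needed.

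Next I would identify the relevant highest weights. The fundamental representation $\C^m$ has highest weight $\lambda=\varepsilon_1$. The adjoint representation of $\su(m)$ has highest weight $\lambda=\varepsilon_1-\varepsilon_m$ (the highest root). I would also record $\rho_\g=\frac12\sum_{i<j}(\varepsilon_i-\varepsilon_j)=\frac12\sum_i(m+1-2i)\varepsilon_i$, so that $\langle\rho_\g,\varepsilon_k\rangle=\frac12(m+1-2k)-\frac1m\cdot\frac12\sum_i(m+1-2i)=\frac12(m+1-2k)$, using that $\sum_i(m+1-2i)=0$.

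Then it is a matter of plugging in. For $V=\C^m$: $\Cas^\g_{\C^m}=\langle\varepsilon_1+2\rho_\g,\varepsilon_1\rangle=\langle\varepsilon_1,\varepsilon_1\rangle+2\langle\rho_\g,\varepsilon_1\rangle=(1-\frac1m)+(m-1)=\frac{m^2-1}{m}$, as claimed. For $V=\g$ (adjoint), with $\lambda=\varepsilon_1-\varepsilon_m$: $\Cas^\g_\g=\langle\lambda,\lambda\rangle+2\langle\rho_\g,\lambda\rangle=\langle\varepsilon_1-\varepsilon_m,\varepsilon_1-\varepsilon_m\rangle+2(\langle\rho_\g,\varepsilon_1\rangle-\langle\rho_\g,\varepsilon_m\rangle)=(2-\frac2m+(-\frac{-2}{m}))\ldots$ — more cleanly, $\langle\varepsilon_1-\varepsilon_m,\varepsilon_1-\varepsilon_m\rangle=2\langle\varepsilon_1,\varepsilon_1\rangle-2\langle\varepsilon_1,\varepsilon_m\rangle=2(1-\tfrac1m)-2(-\tfrac1m)=2$, and $2\langle\rho_\g,\varepsilon_1-\varepsilon_m\rangle=2(\tfrac{m-1}{2}-\tfrac{1-m}{2})=2(m-1)$, giving $\Cas^\g_\g=2+2(m-1)=2m$. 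I expect no genuine obstacle here; the whole proof is a short root-system computation, and the only thing to be vigilant about is the trace-form normalization of the inner product on $\t^\ast$ (equivalently, one can cross-check $\Cas^\g_\g=2m$ against the general fact that for a simple compact $\g$ with the negative trace form in a faithful representation $\rho$, the adjoint Casimir equals the Dynkin index data — or simply verify it on a small case like $m=2$, where $\su(2)$ has $\Cas_\g=4$ and $\Cas_{\C^2}=\tfrac32$). Alternatively, and perhaps worth mentioning as a remark, the value $\Cas^\g_\g=2m$ can be obtained without roots at all: with the negative trace form, $2m$ is precisely the constant such that $-\tr(\ad X\circ\ad Y)=2m\langle X,Y\rangle$ for the Killing form of $\su(m)$, which follows from $-\tr(\ad X\,\ad Y)=2m\,\tr(XY)-2\,\tr(X)\tr(Y)$ and tracelessness.
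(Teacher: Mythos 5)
Your computation is correct, but it takes a genuinely different route from the paper. You run both constants through Freudenthal's formula \eqref{eq:freudenthal} with explicit root data for $A_{m-1}$ (highest weights $\varepsilon_1$ and $\varepsilon_1-\varepsilon_m$, and the induced inner product $\langle\varepsilon_i,\varepsilon_j\rangle=\delta_{ij}-\tfrac1m$ on $\t^\ast$), and all the arithmetic checks out; the normalization point you flag is indeed the only delicate step. The paper avoids roots entirely: it gets $\Cas^\g_\g=2m$ directly from the Killing-form relation $B_{\su(m)}=2m\tr$ (the alternative you mention only as a closing remark), and it gets $\Cas^\g_{\C^m}$ from the trace identity $\tr(\Cas^\g_{\C^m})=-\sum_i\tr(X_i^2)=\dim\g=m^2-1$ together with Schur's lemma. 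That trace trick is worth internalizing, since the paper reuses exactly this mechanism later (e.g.\ the observation after Corollary~\ref{casm} that $\Cas^\k_{\C^m}=\dim\k/m$ and $\Cas^\m_{\C^m}=\dim\m/m$), whereas the Freudenthal route requires identifying a highest weight each time; on the other hand, your method is the one the paper itself must fall back on for the representations $\m$ in Lemma~\ref{cask}, so nothing is lost. One small slip in your final parenthetical: the $\gl(m)$ identity should read $\tr(\ad X\,\ad Y)=2m\tr(XY)-2\tr(X)\tr(Y)$ without the leading minus sign; since $\langle X,Y\rangle=-\tr(XY)$, this still yields $-B_{\su(m)}(X,Y)=2m\langle X,Y\rangle$ as you claim, so the conclusion is unaffected.
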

\begin{proof}
 It is well-known that the Killing form $B_{\su(m)}$ is related to the trace form by
 \begin{equation}
  B_{\su(m)}=2m\tr,
  \label{eq:killingformsu}
 \end{equation}
 and that the Casimir operator corresponding to $-B_{\su(m)}$ is the identity on the adjoint representation. Thus, for minus the trace form, we have $\Cas^\g_\g=2m$. Further, on the standard representation $\C^m$ we can take a trace
 \begin{align*}
  \tr(\Cas^\g_{\C^m})=-\sum_i\tr(X_i^2)=\dim\g=m^2-1,
 \end{align*}
 and by irreducibility it follows that $\Cas^\g_{\C^m}=\frac{m^2-1}{m}$.
\end{proof}


With Corollary~\ref{sandwichonirr}, we obtain:

\begin{lem}[\cite{haber}]
\label{swsun}
 For the representation $\g=\su(m)\subset\End\C^m$, the sandwich constant is given by \[
 \SW^\g_\g=\frac{1}{m}.
 \]
\end{lem}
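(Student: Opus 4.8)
The plan is to deduce this immediately from Corollary~\ref{sandwichonirr} together with the Casimir constants recorded in Lemma~\ref{cassun}. First I would observe that $\C^m$ is an irreducible $\su(m)$-representation, so $\Cas^\g_{\C^m}$ acts as a scalar on $\C^m$; hence for \emph{any} irreducible subrepresentation $W\subset\End\C^m$ the formula from the lemma relating $\SW^\g_{\End\C^m}$ to $\Cas^\g_{\End\C^m}$ specializes, upon restriction to $W$, to
\[\SW^\g_W=\tfrac12\Cas^\g_W-\Cas^\g_{\C^m}\cdot\Id,\]
since the two terms $\Cas^\g_{\C^m}\circ A$ and $A\circ\Cas^\g_{\C^m}$ each contribute $\Cas^\g_{\C^m}\cdot A$. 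I would apply this with $W=\g=\su(m)\subset\End\C^m$, which is precisely the adjoint representation, so that $\Cas^\g_W=\Cas^\g_\g$.

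Then I would simply substitute the two constants from Lemma~\ref{cassun}, namely $\Cas^\g_\g=2m$ and $\Cas^\g_{\C^m}=\frac{m^2-1}{m}$, to obtain
\[\SW^\g_\g=\tfrac12\cdot 2m-\frac{m^2-1}{m}=m-\frac{m^2-1}{m}=\frac{1}{m}.\]

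There is no real obstacle here; the statement is essentially a bookkeeping consequence of the structural identity between sandwich and Casimir operators. The one point deserving a sentence of justification is why the restriction of $\SW^\g_{\End\C^m}$ to the subrepresentation $\g$ is controlled by $\Cas^\g_\g$ rather than by the Casimir operator of the ambient $\End\C^m$ — this is immediate because $\Cas^\g_{\End\C^m}$ intertwines with $\g$ and therefore preserves the invariant subspace $\g\subset\End\C^m$, acting on it as the adjoint Casimir. If one preferred to avoid Corollary~\ref{sandwichonirr} altogether, an alternative would be to evaluate $\sum_i\pi(X_i)A\pi(X_i)$ for $A\in\su(m)$ directly against an explicit $\langle\,\cdot\,,\cdot\,\rangle$-orthonormal basis of $\su(m)$, but that route is more laborious and the argument above is the natural one.
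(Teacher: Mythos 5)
Your proposal is correct and is exactly the paper's (implicit) argument: the text preceding the lemma derives it "with Corollary~\ref{sandwichonirr}" by restricting $\SW^\g_{\End\C^m}$ to the irreducible subrepresentation $\g\subset\End\C^m$ and substituting $\Cas^\g_\g=2m$ and $\Cas^\g_{\C^m}=\frac{m^2-1}{m}$ from Lemma~\ref{cassun}. Your extra remark on why the restriction is governed by the adjoint Casimir constant is the right justification and matches the paper's convention for $\SW^\g_W$ on irreducible submodules.
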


\begin{prop}
\label{p0norm} Let $P$ be the invariant cubic in (\ref{eq:suncubic}). Then its norm is  
\[
 |P|^2=\frac{2(m^2-1)(m^2-4)}{m}.
\]
\end{prop}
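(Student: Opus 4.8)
The plan is to evaluate $|P|^2=\langle P,P\rangle$ directly from the definition \eqref{eq:inprod}, using the polarized form $P(X,Y,Z)=\i\tr(\{X,Y\}Z)$, and then to collapse the resulting triple sum over an orthonormal basis $(X_i)$ of $\su(m)$ into the Casimir and sandwich constants recorded in Lemmas~\ref{cassun} and~\ref{swsun}. Concretely, $|P|^2=\sum_{i,j,k}P(X_i,X_j,X_k)^2$, and the whole computation is the stepwise simplification of this expression.

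First I would note that since each $X_i$ is skew-Hermitian, the anticommutator $\{X_i,X_j\}$ is Hermitian, so $\tr(\{X_i,X_j\}X_k)$ is purely imaginary, $P(X_i,X_j,X_k)$ is real, and $P(X_i,X_j,X_k)^2=-\tr(\{X_i,X_j\}X_k)^2$. To carry out the innermost sum over $k$, observe that the traceless Hermitian matrices $H_k:=-\i X_k$ form an orthonormal basis of $\i\su(m)$ with respect to $(A,B)\mapsto\tr(AB)$; hence $\sum_k\tr(\{X_i,X_j\}X_k)^2=-\sum_k\tr(\{X_i,X_j\}H_k)^2$ is minus the squared norm of the orthogonal projection of the Hermitian matrix $\{X_i,X_j\}$ onto the traceless part, that is, $-\tr(\{X_i,X_j\}^2)+\tfrac1m(\tr\{X_i,X_j\})^2$. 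Using $\tr\{X_i,X_j\}=2\tr(X_iX_j)=-2\delta_{ij}$ and summing over $i,j$ yields
\[|P|^2=\sum_{i,j}\tr(\{X_i,X_j\}^2)-\frac{4(m^2-1)}{m}.\]

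It then remains to compute $\sum_{i,j}\tr(\{X_i,X_j\}^2)$. Expanding the square and using cyclicity of the trace gives $2\sum_{i,j}\tr(X_i^2X_j^2)+2\sum_{i,j}\tr(X_iX_jX_iX_j)$. For the first term, Lemma~\ref{cassun} gives $\sum_iX_i^2=-\Cas^\g_{\C^m}\Id=-\tfrac{m^2-1}{m}\Id$, so $\sum_{i,j}\tr(X_i^2X_j^2)=\tr\!\big((\sum_iX_i^2)^2\big)=\tfrac{(m^2-1)^2}{m}$. For the second term I would recognize $\sum_iX_iX_jX_i=\SW^\g_{\End\C^m}(X_j)$; since $\su(m)\subset\End\C^m$ is an irreducible subrepresentation and $\SW^\g_\g=\tfrac1m$ by Lemma~\ref{swsun}, this equals $\tfrac1mX_j$, whence $\sum_{i,j}\tr(X_iX_jX_iX_j)=\tfrac1m\sum_j\tr(X_j^2)=-\tfrac{m^2-1}{m}$. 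Assembling the pieces gives $\sum_{i,j}\tr(\{X_i,X_j\}^2)=\tfrac{2(m^2-1)(m^2-2)}{m}$, and therefore
\[|P|^2=\frac{2(m^2-1)(m^2-2)}{m}-\frac{4(m^2-1)}{m}=\frac{2(m^2-1)(m^2-4)}{m}.\]

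The step requiring the most care is the reduction of the sum over $k$: one must keep in mind that $\{X_i,X_j\}$ is Hermitian but \emph{not} trace-free, so projecting onto $\i\su(m)$ produces the correction term $\tfrac1m(\tr\{X_i,X_j\})^2$; dropping it would give the wrong constant. Beyond that bookkeeping the argument is essentially mechanical, with the two representation-theoretic inputs — the quadratic Casimir constant on $\C^m$ and the sandwich constant on $\g$ — doing all of the work, which is precisely the kind of simplification sandwich operators are designed to provide.
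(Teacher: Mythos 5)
Your proof is correct and follows essentially the same route as the paper: contract over $k$ to replace $\{X_i,X_j\}$ by its trace-free Hermitian projection (with the $\tfrac1m(\tr\{X_i,X_j\})^2$ correction), expand the anticommutator, and evaluate $\sum_{i,j}\tr(X_iX_jX_iX_j)$ and $\sum_{i,j}\tr(X_i^2X_j^2)$ via the sandwich constant $\SW^\g_\g=\tfrac1m$ and the Casimir constant $\Cas^\g_{\C^m}=\tfrac{m^2-1}{m}$. The only difference is cosmetic bookkeeping, and all intermediate constants check out.
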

\begin{proof}
First, we note that the anticommutator of Hermitian matrices is Hermitian, i.e.
\[\{\i\u(m),\i\u(m)\}\subseteq\i\u(m).\]
This implies that the orthogonal projection (under the trace form) to $\i\su(m)$ of an anticommutator is given by its trace-free part,
\[\{X,Y\}_{\i\su(m)}=\{X,Y\}_0-\frac{2}{m}\tr(XY)\Id.\]
Using that $\tr(X_iX_j)=-\delta_{ij}$, $\sum_i\tr(X_i^2)=-\dim\su(m)=-(m^2-1)$, and applying Lemmas~\ref{cassun} and \ref{swsun}, we find
\begin{align*}
 |P|^2&=-\sum_{i,j,k}\tr(\{X_i,X_j\}X_k)^2=2\sum_{i,j}\tr(X_iX_j\{X_i,X_j\}_0)\\
 &=2\sum_{i,j}\tr\left(X_iX_j\left(\{X_i,X_j\}+\frac{2}{m}\delta_{ij}\Id\right)\right)\\
 &=2\sum_{i,j}\tr(X_iX_jX_iX_j+X_iX_j^2X_i)+\frac{4}{m}\sum_i\tr(X_i^2)\\
 &=2\sum_i\tr(X_i\SW^\g_\g(X_i))-2\sum_i\tr(X_i\Cas^\g_{\C^m}X_i)+\frac{4}{m}\sum_i\tr(X_i^2)\\
 &=\left(\frac{2}{m}-2\frac{m^2-1}{m}+\frac{4}{m}\right)\sum_i\tr(X_i^2)\\
 &=\frac{2(m^2-1)(m^2-4)}{m}.
\end{align*}
\end{proof}

\begin{bem}
Related to $P$ is the cubic Casimir element
\[C_3=-\sum_{i,j,k}\tr(\{X_i,X_j\}X_k)X_iX_jX_k,\]
where again $(X_i)$ denotes an orthonormal basis. Letting $C_3$ act on the defining representation $\C^m$ and taking the trace, we find
\begin{align*}
 \tr_{\C^m}(C_3)&=-\frac{1}{2}\sum_{i,j,k}\tr(\{X_i,X_j\}X_k)^2=\frac{1}{2}|P|^2.
\end{align*}
Therefore, if $\mu_3$ denotes the eigenvalue of the Casimir operator associated to $C_3$ on $\C^m$, we have
\[|P|^2=2m\mu_3.\]
In \cite[p.~5]{haber}, Haber had already worked out\footnote{In his notation, we may take $T_a=(\i/\sqrt{2})X_a$, thus $C_{3F}=\mu_3/4$.} $\mu_3$ to be
\[\mu_3=\frac{(m^2-1)(m^2-4)}{m^2}.\]
The calculation there already relies fundamentally on the sandwich operator $\SW^\g_\g$ \cite[(13)]{haber}. The same method may be used to work out the eigenvalues of higher Casimir operators on the fundamental representation, as long as they come from invariant polynomials of the form \eqref{eq:tracepowers}.
\end{bem}

\section{Symmetric spaces and Jordan algebras}
\label{sec:Section3}
We recall Koiso's list of irreducible symmetric spaces of compact type $M=G/K$ admitting infinitesimal Einstein deformations.

\begin{prop}[\cite{Koiso80}, Thm.~1.1]
\label{symmetricied}
 Of the irreducible symmetric spaces of compact type, only the following are infinitesimally deformable:
  \begin{enumerate}[\upshape(i)]
  \item $\SU(n)/\SO(n)$, $n\geq3$,
  \item $\SU(n)=\frac{\SU(n)\times\SU(n)}{\SU(n)}$, $n\geq 3$,
  \item $\SU(2n)/\Sp(n)$, $n\geq3$,
  \item $\rmE_6/\rmF_4$,
  \item the complex Grassmannians $\Gr_p(\C^{p+q})$, $q\geq p\geq 2$.
  \end{enumerate}
\end{prop}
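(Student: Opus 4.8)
The plan is to recast the defining system \eqref{eq:trace_free}--\eqref{eq:Lin_Ein} as an eigenvalue problem for a Casimir operator and then to sieve the Cartan classification of irreducible compact symmetric spaces. First I would normalise $g$ to be the metric induced by $-B_\g$ on $\m$, for which the Einstein constant is $E=\tfrac12$. It is standard that for an Einstein metric the condition \eqref{eq:Lin_Ein} is equivalent to $\LL h=2Eh$, where $\LL$ is the Lichnerowicz Laplacian on symmetric $2$-tensors; and the decisive feature of a symmetric space $G/K$ is that $\LL$ acts on sections of $\Sym^2 T^\ast M=G\times_K\Sym^2\m^\ast$ exactly as the Casimir operator $\Cas^\g$. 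Thus $\varepsilon(g)$ is the intersection of the $1$-eigenspace of $\Cas^\g$ with $\ker\tr\cap\ker\mathrm{div}$.

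Next, decomposing $L^2(\Sym^2 T^\ast M)$ via Peter--Weyl and Frobenius reciprocity as $\closedsum_\gamma V_\gamma\otimes\Hom_K(V_\gamma,\Sym^2\m^\ast)$, the operator $\Cas^\g$ acts in the $\gamma$-block by the scalar $\langle\gamma+2\rho_\g,\gamma\rangle$ of \eqref{eq:freudenthal}, with the inner product on $\t^\ast$ induced from $-B_\g$. The eigenvalue equation $\langle\gamma+2\rho_\g,\gamma\rangle=1$ is extremely restrictive: the adjoint representation already saturates it (the $-B_\g$-Casimir operator is the identity on $\g$), and since the Casimir functional grows without bound with $\gamma$, only finitely many dominant weights $\gamma$ qualify, and these can be listed explicitly for each simple $G$.

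For every surviving candidate $\gamma$ I would compute the branching of $V_\gamma$ to $K$ and check whether the trace-free part $\Sym^2_0\m^\ast$ of $\Sym^2\m^\ast$ contains the corresponding $K$-type; those that fail are discarded. For the rest, the $G$-equivariant maps $\tr$ and $\mathrm{div}$ restrict to maps of the finite-dimensional multiplicity space $\Hom_K(V_\gamma,\Sym^2\m^\ast)$, and one must intersect with their kernels -- this is the step where the conformal directions $C^\infty(M)\cdot g$ and the Killing directions $\Lie_X g$, which carry the same Casimir eigenvalues as certain genuine transverse trace-free tensors, get removed. Carrying this out over the whole classification leaves exactly the five families in the statement, in each of which the surviving block is the adjoint representation $\gamma=\theta$, which simultaneously yields the $G$-module isomorphism $\varepsilon(g)\cong\g$ used throughout the paper.

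The main obstacle is the third step: the branching and transverse trace-free filtering must be verified entry by entry across the Cartan list, and the ``near misses'' -- weights that pass the Casimir test but are eliminated by branching or because their eigenspace is entirely conformal or Killing -- call for case-by-case representation-theoretic bookkeeping rather than one uniform argument. This is essentially the whole of Koiso's analysis in \cite{Koiso80}; the spectral reduction in the first two steps is comparatively routine, and the real work is in showing that spaces such as the real and quaternionic Grassmannians, or the exceptional symmetric spaces other than $\rmE_6/\rmF_4$, admit no infinitesimal Einstein deformations whatsoever.
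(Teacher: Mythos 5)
This proposition is quoted from Koiso \cite{Koiso80} and the paper offers no proof of its own, so the only comparison available is with the cited source: your reduction of \eqref{eq:trace_free}--\eqref{eq:Lin_Ein} to the Casimir eigenvalue problem $\LL h=2Eh$ via Peter--Weyl, Frobenius reciprocity and Freudenthal's formula, followed by the branching and transverse trace-free sieve over the Cartan list, is exactly Koiso's strategy and is correctly laid out. You rightly flag that the substance lies in the entry-by-entry verification across the classification, which your outline defers but which is the unavoidable content of a result of this type.
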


In \cite[Prop.~2.1]{GGiso} it was observed that spaces (i)--(iv) have in common the existence of a $G$-invariant (hence parallel) symmetric $3$-tensor $\sigma\in\Sy^3(M)$. This symmetric $3$-tensor furnishes an equivariant isomorphism between the $\varepsilon(g)$ and the space of Killing vector fields:

\begin{prop}[Gasqui--Goldschmidt, cf.~Lem.~2.8 in \cite{BHMW}]
\label{iedparam}
 On any of the spaces {\upshape(i)--(iv)}, the map
 \[\isom(M,g)\longrightarrow\varepsilon(g),\qquad Z\mapsto Z\intprod\sigma\]
 is an isomorphism. In particular $\varepsilon(g)\cong\g$.
\end{prop}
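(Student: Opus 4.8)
The plan is to exhibit the map $Z\mapsto Z\intprod\sigma$ as an injective $G$-equivariant linear map between two $G$-representations of the same dimension, whence it is an isomorphism; the equality $\dim\isom(M,g)=\dim\varepsilon(g)$ for each of the spaces (i)--(iv) is exactly Koiso's computation recalled in Proposition~\ref{symmetricied}, combined with the well-known identification $\isom(M,g)\cong\g$ for an irreducible symmetric space of compact type (whose isometry group has Lie algebra $\g$). So the real content is: (a) the map lands in $\varepsilon(g)$, i.e. $Z\intprod\sigma$ is actually an infinitesimal Einstein deformation for every Killing field $Z$; and (b) the map is injective.

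For (a), first note that since $\sigma$ is $G$-invariant it is parallel, $\nabla\sigma=0$, and that a Killing field $Z$ on a compact symmetric space satisfies $\nabla^*\nabla Z = \Cas^\g_{T M}\,Z$ realized geometrically as $\nabla^*\nabla Z = \Ric(Z)= E\,Z$ together with $\nabla^2 Z = -R(\,\cdot\,,Z)\,\cdot\,$. I would then compute the three defining conditions of an IED for $h:=Z\intprod\sigma$ directly. Tracing $h$ against the metric gives $\tr h = Z\intprod(\tr_{12}\sigma)$, and $\tr_{12}\sigma=0$ because $\sigma$ is the traceless symmetric $3$-tensor underlying the cubic form $P$ of \eqref{eq:suncubic} (or its analogue on the other spaces) — so \eqref{eq:trace_free} holds. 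For \eqref{eq:div_free}, since $\nabla\sigma=0$ one gets $\mathrm{div}(Z\intprod\sigma) = (\mathrm{div}\,Z)\intprod\sigma + \sigma(\nabla Z,\cdot,\cdot)$ in suitable notation; $\mathrm{div}\,Z=0$ for a Killing field, and the remaining term vanishes because $\nabla Z$ is skew while $\sigma$ contracts it symmetrically. Finally \eqref{eq:Lin_Ein}: using $\nabla\sigma=0$ one reduces $\nabla^*\nabla(Z\intprod\sigma)$ to $(\nabla^*\nabla Z)\intprod\sigma$ plus curvature terms coming from commuting derivatives, and $(\nabla^*\nabla Z)\intprod\sigma = E\,Z\intprod\sigma$; the curvature terms must then be matched against $2\mathring R(Z\intprod\sigma)$ using the symmetric-space curvature identities (parallelism of $R$, the relation $R(X,Y)Z=-[[X,Y],Z]$ under the canonical identification, and the Weitzenböck-type identity for Killing fields). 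This is the step I expect to be the main obstacle: one must show the curvature correction terms conspire to give exactly $2\mathring R h$, which is a genuine computation in the representation-theoretic model of the symmetric space and is presumably where \cite{GGiso} does the real work.

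For (b), injectivity follows from the fact that $\sigma$ is nondegenerate as a $3$-tensor in the sense that $Z\intprod\sigma=0$ forces $Z=0$: contracting again, $(Z\intprod\sigma)(Z,\cdot)=\sigma(Z,Z,\cdot)=0$, and since $\sigma$ corresponds on the model space to $X\mapsto\i\tr(X^3)$ on $\su(m)$ (so its polarization $\i\tr(\{X,Y\}Z)$ has trivial kernel on each factor) this already gives $Z=0$; alternatively, one can invoke that the map is $G$-equivariant and $\g$ is $G$-irreducible, so the kernel is either $0$ or all of $\g$, and it is not all of $\g$ since $\sigma\neq0$. Thus the map is an injective linear map between spaces of equal dimension, hence an isomorphism, and the final assertion $\varepsilon(g)\cong\g$ is immediate from $\isom(M,g)\cong\g$. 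I would present (b) and the dimension count briefly, citing Proposition~\ref{symmetricied} and standard symmetric-space theory, and devote the bulk of the argument to verifying \eqref{eq:trace_free}--\eqref{eq:Lin_Ein} in (a), following \cite[Prop.~2.1]{GGiso} and \cite[Lem.~2.8]{BHMW}.
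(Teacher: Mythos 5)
The paper itself gives no proof of Proposition~\ref{iedparam}: it is quoted from Gasqui--Goldschmidt via \cite[Lem.~2.8]{BHMW}, so your reconstruction has to stand on its own. Its architecture (equivariance, injectivity, dimension count, plus verification of \eqref{eq:trace_free}--\eqref{eq:Lin_Ein}) is the right one, and your checks of \eqref{eq:trace_free} and \eqref{eq:div_free} are correct: the full trace of $\sigma$ vanishes because $\sum_iE_i^2$ is a multiple of the identity and $Z_\m$ is traceless, and the divergence is a contraction of the skew tensor $\nabla\tilde Z$ against the symmetric $\sigma$. The genuine gap is \eqref{eq:Lin_Ein}, which you explicitly do not prove, and your description of what remains there is also inaccurate. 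Since $\sigma$ is parallel, $\nabla^2_{U,V}(\tilde Z\intprod\sigma)=(\nabla^2_{U,V}\tilde Z)\intprod\sigma$ with \emph{no} correction terms from commuting derivatives; combined with Kostant's identity $\nabla^2_{U,V}\tilde Z=-R(\tilde Z,U)V$ for Killing fields one gets $\nabla^\ast\nabla(\tilde Z\intprod\sigma)=(\nabla^\ast\nabla\tilde Z)\intprod\sigma=E\,\tilde Z\intprod\sigma$ on the nose. So nothing ``conspires'': the whole content of \eqref{eq:Lin_Ein} collapses to the pointwise algebraic eigenvalue identity $\mathring R(V\intprod\sigma)=\tfrac{E}{2}\,V\intprod\sigma$ for $V\in\m$, i.e.\ the statement that the $K$-submodule $\sigma(\m,\cdot,\cdot)\subset\Sym^2_0\m$ is an eigenspace of the curvature operator of the second kind with eigenvalue $E/2$. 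That computation in the isotropy representation is precisely what Gasqui--Goldschmidt supply, and it is absent from your proposal.

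Two smaller points. First, your fallback injectivity argument (``the kernel is a submodule of the irreducible $\g$'') fails in case (ii), where $\g=\su(n)\oplus\su(n)$ is reducible as a $G$-module; the pointwise argument is the robust one, namely that for $V\in\m=\i\Herm_0(n,\AA)$ and $n\geq3$ the condition $\{V,W\}_0=0$ for all $W\in\m$ forces $V=0$ (diagonalize and test against off-diagonal $W$), whence $\tilde Z\intprod\sigma=0$ everywhere implies $\tilde Z\equiv0$. Second, the dimension equality $\dim\varepsilon(g)=\dim\g$ is not contained in Proposition~\ref{symmetricied} as stated, which only lists the deformable spaces; you need Koiso's identification of $\varepsilon(g)$ as a $G$-module from \cite{Koiso80}, and that should be cited explicitly, since without it injectivity alone does not give surjectivity.
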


As we will see in~\ref{subsec:symspa}, the existence of $\sigma$ on the spaces (i)--(iv) is not a coincidence, but related to the four normed division algebras.

The spaces (v) are discussed in \cite{Hall24,OddGr,OddGrArxiv}, see also \cite{NS23} for $p=2$. In this case it is also true that $\varepsilon(g)\cong\isom(M,g)$, although the parametrization of infinitesimal Einstein deformations by Killing fields is not through a bundle map (order zero), but rather given by a certain (first order) differential operator.

\subsection{Jordan algebras and associated Lie algebras}

We give a brief overview of the relevant structures and relations between them. Pieces of the following discussion and some pertinent background material can be found in \cite[\S3--4]{baez}, \cite{braunkoecher}, \cite[\S II--III]{cones}, \cite{jacobson}, \cite{koecher}, \cite[\S VIII]{LoosII}, \cite[\S I.8]{satake}, \cite[\S II, \S IV]{schafer}.

\paragraph{Jordan algebras.}

For a (real) normed division algebra $\AA$, let $\Herm(n,\AA)$ denote the space of Hermitian $n\times n$-matrices with entries in $\AA$, and let $\Herm_0(n,\AA)$ denote the subspace of traceless matrices. Declare the \emph{Jordan product} of $\Herm(n,\AA)$ to be
\[A\circ B=\frac12\{A,B\}.\]
For associative $\AA$ (i.e.~$\AA=\R,\C,\H$), these becomes instances of a \emph{Jordan algebra}, meaning a commutative, not necessarily associative algebra $(J,\circ)$ over $\R$ satisfying
\[(x\circ y)\circ (x\circ x)=x\circ(y\circ(x\circ x)),\qquad x,y\in J.\]
A Jordan algebra is called \emph{formally real} if the vanishing of a sum of squares implies the vanishing of individual terms, i.e.~for any $x_1,\ldots,x_n\in J$,
\[x_1^2+\ldots+x_n^2=0\qquad\text{implies}\qquad x_1=\ldots=x_n=0.\]
Formally real Jordan algebras always decompose as a direct sum of \emph{simple} ones, which classified in \cite{jordanclassif}. The algebras $(\Herm(n,\AA),\circ)$ appear as families in this classification, and are sometimes called \emph{special Jordan algebras}. However, the classification also contains one exceptional entry: the \emph{Albert algebra} $\Herm(3,\OO)$, which is a Jordan algebra despite $\OO$ being nonassociative.

\paragraph{Derivation and structure algebra.}

To any Jordan algebra $J$ we may associate its Lie algebra of derivations $\Der J$. One also defines the \emph{structure algebra} (or \emph{Lie multiplication algebra}) $\Str J$, which is the Lie subalgebra of $\gl(J)$ generated by the left-/right-multiplication operators of $J$. If $J$ is a formally real, unital Jordan algebra, we have (\cite[(I.7.19)]{satake}, cf.~Thm.~2.5 and \S IV.1 in \cite{schafer})
\[\Str J=\Der J\oplus J,\]
 where the Lie bracket on $J$ is given by
\[J\times J\to\Der J,\qquad[x,y]z:=x\circ(y\circ z)-y\circ(x\circ z).\]
Thus $(\Str J,\Der J)$ is a symmetric pair of Lie algebras. The associated (noncompact) symmetric space $D_J$ is called a \emph{homogeneous domain of positivity}, and it can be realized as a connected component of the set of invertible elements in $J$ \cite[Thm.~VIII.2.2]{LoosII}.

Formally real Jordan algebras possess a distinguished inner product \cite[\S III]{cones}, called the \emph{Killing form}, which is invariant in the sense that
\[\langle x\circ y,z\rangle=\langle x,y\circ z\rangle,\qquad x,y,z\in J.\]
This implies that the Lie algebra $\Der J\oplus\i J\subset(\Str J)^\C$ preserves the inner product, and hence is a compact Lie algebra. Thus $\Str J$ is reductive, and its semisimple part is $\Str_0J:=[\Str J,\Str J]$. We shall be particularly interested in the symmetric spaces associated to the pairs $(\Str_0 J,\Der J)$, or rather their compact duals.

On every finite-dimensional, unital Jordan algebra $J$ (more generally, on any power-associative unital algebra) there exists a canonical polynomial $\det: J\to\R$ called the \emph{determinant}, which is the polynomial of minimal degree with the property that any $x\in J$ is annihilated by
\[\det(\lambda1_J-x)\in\R[\lambda],\]
and satisfying $\det(1_J)=1$ \cite[\S II.2]{cones}.
Taking minus the coefficient of the second highest power of $\lambda$ in the above polynomial, one obtains a linear form $\tr: J\to\R$ called the \emph{trace}. If $J$ is a simple formally real Jordan algebra, $\Str_0J$ coincides with the annihilator of the determinant in $\gl(J)$, and moreover $\Str_0J=\Der J\oplus J_0$, where $J_0$ is the kernel of the trace on $J$ \cite[Satz~III.5.5 and \S IX.5]{braunkoecher}.

\subsection{The emerging symmetric spaces}


\paragraph{Special and exceptional Jordan algebras.}

We now let $J=\Herm(n,\AA)$, where either $\AA=\R,\C,\H$ and $n\geq3$, or $\AA=\OO$ and $n=3$. Then it is well-known \cite{baez} that
\[\Der J=\su(n,\AA):=\Der\AA\oplus\{X\in\AA^{n\times n}\,|\,\bar{X}^\top=-X,\ \tr X=0\}.\]
The semisimple part of the structure algebra is then given by
\[\Str_0J=\su(n,\AA)\oplus\Herm_0(n,\AA)=\sl(n,\AA):=\Der\AA\oplus\{X\in\AA^{n\times n}\,|\,\tr X=0\},\]
where $\Herm_0(n,\AA)=J_0$ is the subspace of traceless matrices. The compact real form of $\Str_0J$ is in turn given by
\[\g:=\su(n,\AA)\oplus\i\Herm_0(n,\AA).\]
Since $\Str_0J$ is the Lie algebra of endomorphisms of $J$ annhilating the determinant, we may view its compact form $\g$ as the Lie algebra of complex-linear endomorphisms of $J^\C$ preserving both the determinant and the Killing form.

The concrete Lie algebras obtained in each case are listed in Table~\ref{algebras} (see \cite[Thm.~10]{orlitzky} for the corresponding automorphism groups).

\begin{table}[th]
\centering
\renewcommand{\arraystretch}{1.25}
 \begin{tabular}{c|c|c|c}\hline
  $\AA$&$\Der J=\su(n,\AA)$&$\Str_0 J=\sl(n,\AA)$&compact form $\g$\\\hline\hline
  $\R$ ($n\geq3$)&$\so(n)$&$\sl(n,\R)$&$\su(n)$\\\hline
  $\C$ ($n\geq3$)&$\su(n)$&$\sl(n,\C)$&$\su(n)\oplus\su(n)$\\\hline
  $\H$ ($n\geq3$)&$\sp(n)$&$\sl(n,\H)$&$\su(2n)$\\\hline
  $\OO$ ($n=3$)&$\ff_4$&$\fe_6^{-26}$&$\fe_6$\\\hline
 \end{tabular}
\caption{Lie algebras associated to the Jordan algebras $J=\Herm(n,\AA)$.}
\label{algebras}
\end{table}

\paragraph{Compact irreducible symmetric spaces.} \label{subsec:symspa}

For $J$ as above, let $\k:=\Der J$ and $\m:=\i J_0$. Looking at Table~\ref{algebras}, we see that the symmetric pairs $(\g,\k)$ correspond to the compact symmetric spaces $M=G/K$ in Proposition~\ref{symmetricied} (i)--(iv). Among all irreducible symmetric spaces of compact type, these are the only ones carrying an invariant (and thus paralllel) symmetric $3$-tensor; moreover, in each of those cases, the space of such tensors $\Sy^3(M)^G\cong(\Sym^3\m^\ast)^K$ is one-dimensional \cite[Prop.~2.1]{GGiso}. The source of this tensor $\sigma$ is now easily explained: it comes from the trilinear form corresponding to Jordan multiplication (respectively the anticommutator of matrices), composed with projection to the trace-free part:
\begin{equation}
\sigma(X,Y,Z):=\i\tr(\{X,Y\}Z)=g(-\i\{X,Y\}_0,Z),
\label{sigma}
\end{equation}
where $X,Y,Z\in\m=\i\Herm_0(n,\AA)\cong T_oM$, if we choose the Riemannian metric to be induced by minus the trace form on $\m\subset\g$. As a consequence of this and Proposition~\ref{iedparam}, the IED of $(M,g)$ can be thought of as the symmetric endomorphisms of $TM$ corresponding to ``Jordan multiplication by a Killing field''.

\begin{bem}
 Besides the special Jordan algebras $\Herm(n,\AA)$ for $n\geq3$, $\AA=\R,\C,\H$ and the exceptional Jordan algebra $\Herm(3,\OO)$, the other entries in the classification \cite{jordanclassif} are the \emph{spin factors} $\Sf_n:=\R^n\times\R$, $n\geq0$, with Jordan product given by
 \[(u,\alpha)\circ(v,\beta)=(\alpha v+\beta u,\langle u,v\rangle+\alpha\beta).\]
 Their derivation algebra is $\Der\Sf_n=\so(n)$, acting on the $\R^n$ factor in the standard way \cite[Thm.~10]{orlitzky}. One may show that the semisimple part of its structure algebra is
 \[\Str_0\Sf_n=\so(n)\oplus\R^n\cong\so(n,1),\]
 hence the symmetric spaces related to $(\Str_0\Sf_n,\Der\Sf_n)$ are real hyperbolic space resp.~the sphere $S^n$. Because the $2\times2$-special Jordan algebras are isomorphic to spin factors,
 \[\Herm(2,\AA)\cong\Sf_{a+1},\qquad a=\dim\AA,\]
 this also furnishes the isomorphisms $\so(a+1,1)\cong\sl(2,\AA)$.
\end{bem}

\paragraph{Projective spaces from Jordan algebras.}
\label{jordanproj}


The projective spaces over the four normed division algebras may be exhibited as certain subsets of formally real Jordan algebras, an observation which in this generality is due to U.~Hirzebruch \cite{crosses}, and which has also been used to define the geometry of the Cayley plane $\OO\P^2$ \cite{freudenthal,jordan}. In analogy to projectors from linear algebra, one may in any Jordan algebra $J$ consider the subsets of idempotents which have trace one or two,
\begin{align*}
 \P_J:=\{x\in J\,|\,x^2=x,\ \tr x=1\},\\
 \mathbb{L}_J:=\{x\in J\,|\,x^2=x,\ \tr x=2\}.
\end{align*}
We may interpret $\P_J$ as a set of \emph{points}, $\mathbb{L}_J$ as a set of \emph{lines}, and call $(p,\ell)\in\P_J\times \mathbb{L}_J$ \emph{incident} if $p\circ\ell=p$. Starting from the Jordan algebras $J=\Herm(n,\AA)$, the corresponding incidence geometries are the projective spaces $\P_J=\AA\P^{n-1}\subset\Herm(n,\AA)$.

It is sometimes convenient to identify $\P_J$ with its image in the projectivization $\P(J)$ of the vector space $J$; this map is one-to-one because every element of the image clearly has a unique representative of trace one.

Suppose that $J$ is simple and formally real. Let $G'\subset\GL(J)$ be the connected Lie group with Lie algebra $\Str_0J$. Under the above identification, $\P_J$ acquires an action of $G'$; indeed, $G'$ acts on $\P_J$ by \emph{collineations}, i.e.~it preserves the incidence relation. This can be verified case by case for the known classification \cite{jordanclassif}; a direct proof was given by Freudenthal for the exceptional Jordan algebra $J=\Herm(3,\OO)$ \cite{freudenthal}, but seems to be lacking for the general case. Moreover, it is not clear what sort of incidence geometries one obtains when relaxing the assumptions on $J$.

\begin{qu}
 Is there a direct proof that $G'$ acts on $\P_J$ transitively and by collineations?
\end{qu}

\begin{qu}
 Starting from any (commutative) power-associative algebra $A$, what incidence geometries $\P_A$ can be obtained? Under which conditions is $\P_A$
 \begin{itemize}
  \item a projective space?
  \item a projective plane?
  \item Desarguesian/non-Desarguesian?
 \end{itemize}
\end{qu}

\paragraph{Complexification and Lagrangians of projective spaces.}

We may now follow the construction outlined in \cite{complexify} to define a ``complexification'' of the projective space $\P_J$. First, we complexify the Lie group $G'$ to obtain an action of a complex Lie group $G^\C$ on $\P(J^\C)$. We now take
\[\P_J^\C:=G^\C.\P_J\subset\P(J^\C)\]
to be the $G^\C$-orbit of $\P_J\subset\P(J)\subset\P(J^\C)$. Since $\P_J^\C$ is compact, it is in fact the equal to $G.\P_J$, where $G\subset G^\C$ is the maximal compact subgroup, i.e.~compact real form of $G^\C$. The resulting spaces for the Jordan algebras $J=\Herm(n,\AA)$ are listed in Table~\ref{projspaces}.

\begin{table}[th]
\centering
\renewcommand{\arraystretch}{1.6}
 \begin{tabular}{c|c|c}\hline
  $\AA$&$\P_J=G'/H'$&$\P_J^\C=G/H$\\\hline\hline
  $\R$ ($n\geq3$)&$\R\P^{n-1}=\frac{\SL(n,\R)}{\rmS(\GL(n-1,\R)\times\GL(1,\R))}$&$\C\P^{n-1}=\frac{\SU(n)}{\rmS(\U(n-1)\times\U(1))}$\\\hline
  $\C$ ($n\geq3$)&$\C\P^{n-1}=\frac{\SL(n,\C)}{\rmS(\GL(n-1,\C)\times\GL(1,\C))}$&$\C\P^{n-1}\times\C\P^{n-1}=\frac{\SU(n)^2}{\rmS(\U(n-1)\times\U(1))^2}$\\\hline
  $\H$ ($n\geq3$)&$\H\P^{n-1}=\frac{\SL(n,\H)}{\rmS(\GL(n-1,\H)\times\GL(1,\H))}$&$\Gr_2(\C^{2n})=\frac{\SU(2n)}{\rmS(\U(2)\U(2n-2))}$\\\hline
  $\OO$ ($n=3$)&$\OO\P^2=\frac{\rmE_6^{-26}}{\Spin(9,1)}$&$(\OO\otimes\C)\P^2=\frac{\rmE_6}{\Spin(10)\U(1)}$\\\hline
 \end{tabular}
\caption{Projective spaces inside $J=\Herm(n,\AA)$ and their complexification.}
\label{projspaces}
\end{table}

Let $K$ be the maximal compact subgroup of $G$, whose Lie algebra is $\Der J$. In the cases $J=\Herm(n,\AA)$, the compact irreducible symmetric space $M=G/K$ has the additional interpretation as a \emph{Lagrangian Grassmannian} of Lagrangian $\AA^n$-subspaces in $(\AA\otimes\C)^n$ (see \cite{huang} for a precise definition and proof). Equivalently, we may view $M$ as the space of ``linear, Lagrangian'' inclusions $\P_J\subset\P_J^\C$ \cite[\S10.K, Table~2]{besse}. Indeed, the isotropy group $K$ is precisely the identity component of $\Isom(\P_J)$.

\begin{bem}
 Carrying out the construction in~\ref{jordanproj} with the spin factors $\Sf_n$, one obtains the \emph{celestial sphere}
 \[\P_{\Sf_n}=\{(u,\alpha)\in\R^n\times\R\,|\,|u|^2=\alpha^2=\tfrac12\}\cong S^{n-1},\]
 where all points lie on a single line \cite[\S3.3]{baez}; by the isomorphism $\Herm(2,\AA)\cong\Sf_{a+1}$, this also includes the projective lines $\AA\P^1$. Identifying $\P_{\Sf_n}$ with the space of light rays through the origin in $\R^{n,1}$ gives the (transitive) action of the group $G'=\SO(n,1)^0$. The complexification procedure then yields the complex quadric
 \[\{z_0^2+\ldots+z_n^2=0\}\subset\C\P^n\]
 as a homogeneous space of $G=\SO(n+1)$, which is isomorphic to the Grassmannian of oriented planes $\widetilde{\Gr}_2(\R^{n+1})=\SO(n+1)/(\SO(2)\times\SO(n-1))$.
\end{bem}

\begin{bem}
 For completeness we note that there is yet another construction of symmetric spaces from formally real Jordan algebras: if $D_J$ denotes the domain of positivity associated to a Jordan algebra $J$, then the \emph{half-space} belonging to $J$ is defined as the open subset
 \[H_J:=J+\i D_J\subset J^\C.\]
 It is a Hermitian symmetric space of noncompact type \cite[Thm.~VIII.2.5]{LoosII}, cf.~\cite{halfspaces}. This gives a way to define the exceptional Lie group $\rmE_7^{-25}$, namely as the isometry group of the half-space $H_J$ associated to $J=\Herm(3,\OO)$. Even more constructions of symmetric spaces from Jordan algebras are discussed in \cite{helwig}.
\end{bem}

\section{Integrability to second order}
\label{sec:Section4}
\subsection{Koiso's obstruction integral}

Let $(M,g)$ be a compact, oriented Einstein manifold, with Einstein constant $E$. In \cite{Koiso82}, Koiso gives an equivalent criterion for an IED to be integrable to second order:

\begin{prop}[\cite{Koiso82}, Lem.~4.7]
\label{int2iff}
 An IED $h\in\varepsilon(g)$ is integrable to second order if and only if
 \[\Eop_g''(h,h)\perp_{L^2}\varepsilon(g).\]
\end{prop}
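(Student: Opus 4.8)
The plan is to read this as the classical second-order obstruction for a real-analytic variational problem with gauge symmetry, and to settle it through the Fredholm theory of the linearized Einstein operator restricted to transverse-traceless tensors. Throughout, let $\Pi$ denote $L^2$-orthogonal projection of $\Sy^2(M)$ onto $\TT(M)$.

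The first step is to write the second-order perturbation equation in a fixed gauge. Unwinding Koiso's notion of integrability, one seeks a $2$-jet $g_t=g+th+t^2k$ (with $h\in\varepsilon(g)$) that is Einstein to second order modulo diffeomorphism and rescaling. Working inside the set $\CS_g$ of unit-volume, constant-scalar-curvature metrics, whose tangent space splits as $T_g\CS_g=\TT(M)\oplus T_g(\Diff(M)\cdot g)$, Taylor expansion together with $\Eop(g)=0$ and $\Eop_g'(h)=0$ gives
\[
\Eop(g_t)=t^2\Big(\Eop_g'(k)+\tfrac12\,\Eop_g''(h,h)\Big)+O(t^3).
\]
Now $\Eop_g'$ kills the diffeomorphism directions, $\Eop_g'\circ\delta_g^*=0$, and keeping $g_t$ inside $\CS_g$ to second order determines the pure-trace part of $k$ through a Laplace-type equation, solvable except on the round sphere -- where $\varepsilon(g)=0$ renders the proposition vacuous. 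Hence $h$ is integrable to second order precisely when the transverse-traceless part of the bracketed $t^2$-coefficient can be annihilated, i.e.\ when
\[
\Pi\,\Eop_g''(h,h)\in\im\big(\Eop_g'\big|_{\TT(M)}\big).
\]

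The second step identifies this image. On $\TT(M)$ the operator $\Eop_g'$ reduces to the manifestly self-adjoint, elliptic operator $\tfrac12(\nabla^*\nabla-2\mathring{R})$ -- indeed this is how condition \eqref{eq:Lin_Ein} arises, the $\delta^*\delta$-term, the Hessian-of-trace term, and the nonlocal scalar-curvature term of $\Eop_g'$ all disappearing on transverse-traceless tensors. By standard elliptic theory on the compact manifold $M$, this operator has finite-dimensional kernel, equal to $\varepsilon(g)$ by definition, and $L^2$-closed image, so that
\[
\TT(M)=\varepsilon(g)\ \oplus\ \im\big(\Eop_g'\big|_{\TT(M)}\big),\qquad\im\big(\Eop_g'\big|_{\TT(M)}\big)=\varepsilon(g)^{\perp_{L^2}}\cap\TT(M).
\]
Since $\varepsilon(g)\subset\TT(M)$, we have $\langle\Eop_g''(h,h),h'\rangle_{L^2}=\langle\Pi\,\Eop_g''(h,h),h'\rangle_{L^2}$ for every $h'\in\varepsilon(g)$; therefore the membership condition of the first step holds if and only if $\Pi\,\Eop_g''(h,h)\perp_{L^2}\varepsilon(g)$, if and only if $\Eop_g''(h,h)\perp_{L^2}\varepsilon(g)$. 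This is the assertion.

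The step I expect to cost the most effort is the gauge reduction of the first paragraph: checking, in suitable Hölder or Sobolev completions, that ``integrable to second order'' is insensitive to adding Lie-derivative and pure-trace corrections to $k$, equivalently that the diffeomorphism-and-rescaling freedom exactly spans the complement of $\TT(M)$ into which the non-transverse part of $\Eop_g''(h,h)$ falls. This is where one uses the slice theorem (see \cite[Ch.~12]{besse}), the invertibility of the relevant Laplace-type operator on functions, and the identity $\Eop_g'\circ\delta_g^*=0$ (which follows from the diffeomorphism invariance of $\Eop$); all of it is carried out in \cite{Koiso82} and reviewed in \cite{besse} and \cite{NS23}. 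By comparison, the self-adjointness bookkeeping and the elliptic splitting of the second paragraph are routine.
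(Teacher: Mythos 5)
Your argument is correct and is essentially the classical Koiso--Besse proof; the paper itself offers no proof of this proposition, citing \cite[Lem.~4.7]{Koiso82} directly, and your Lyapunov--Schmidt-type reduction (gauge-fix into $\CS_g$, then apply the Fredholm alternative to the self-adjoint elliptic operator $\tfrac12(\nabla^*\nabla-2\mathring{R})$ on $\TT(M)$) is exactly the standard route. One mechanism is worth stating more carefully: since $\Eop_g'$ annihilates $\im\delta_g^*$, the Lie-derivative freedom in $k$ cannot be used to \emph{cancel} a $\delta_g^*$-component of $\Eop_g''(h,h)$; rather, that component vanishes identically because the contracted Bianchi identity gives $\delta_{g'}\Eop(g')=-\tfrac12\,d\,\scal_{g'}=0$ and $\tr_{g'}\Eop(g')=0$ along unit-volume constant-scalar-curvature curves, so after the trace part of $k$ is fixed the entire second-order residual $\Eop_g'(k)+\tfrac12\Eop_g''(h,h)$ already lies in $\TT(M)$ and the problem reduces, as you claim, to the Fredholm alternative there.
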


Define \emph{Koiso's obstruction integral} $\Psi$ as the polynomial on $\varepsilon(g)$ given by
\[\Psi(h):=2\Ltwoinprod{\Eop_g''(h,h)}{h},\qquad h\in\varepsilon(g).\]
Clearly, a necessary condition for $h\in\varepsilon(g)$ to be integrable to second order is that $\Psi(h)=0$. A formula for $\Psi$ was worked out in \cite[Lem.~4.3]{Koiso82}, and simplified in \cite[Prop.~3.14]{NS23} in terms of the Frölicher--Nijenhuis bracket.

Using the metric, we may identify any covariant $2$-tensor with a section of $\End TM$, or equivalently, a vector-valued one-form. The Frölicher--Nijenhuis bracket on vector valued one-forms is the symmetric bilinear map
\[\FN{\,\cdot\,}{\cdot\,}: \Omega^1(M,TM)\times\Omega^1(M,TM)\longrightarrow\Omega^2(M,TM)\]
determined by
\begin{align*}
 \FN{h}{h}(X,Y)&=-h^2[X,Y]+h([hX,Y]+[X,hY])-[hX,hY]\\
 &=-(\nabla_{hX}h)Y+(\nabla_{hY}h)X+h(d^\nabla h(X,Y))
\end{align*}
for $h\in\Omega^1(M,TM)$. With that, Koiso's obstruction integral $\Psi$ can be written as
\begin{align}
\Psi(h)&=\int_M\left(3\langle\delta\FN{h}{h},h\rangle-E\tr(h^3)\right)\vol_{g}\notag\\\
&=\int_M\left(3\langle\FN{h}{h},d^\nabla h\rangle-E\tr(h^3)\right)\vol_{g}.\label{eq:psiexpr}
\end{align}
In the last line we have used that $\delta$ is formally $L^2$-adjoint to the differential operator
\[d^\nabla:\quad\Omega^1(M,TM)\to\Omega^2(M,TM),\qquad d^\nabla\alpha=\sum_ie^i\wedge\nabla_{e_i}\alpha,\]
given that we use the inner products
\begin{align*}
\langle\alpha,\beta\rangle&=\sum_i\langle\alpha(e_i),\beta(e_i)\rangle,&\alpha,\beta&\in\Omega^1(M,TM),\\
\langle\alpha,\beta\rangle&=\frac12\sum_{i,j}\langle\alpha(e_i,e_j),\beta(e_i,e_j)\rangle,&\alpha,\beta&\in\Omega^2(M,TM).
\end{align*}
The space $\varepsilon(g)$ is always a (finite-dimensional) representation of the isometry group, and the obstruction integral $\Psi$ is an invariant cubic form on this representation.

In \cite{NS23,OddGrArxiv} it is pointed out that the trilinear form
\[\varepsilon(g)^3\to\R,\qquad (h_1,h_2,h_3)\mapsto\Ltwoinprod{\Eop_g''(h_1,h_2)}{h_3}\]
is completely symmetric. Thus all the information about integrability to second order is already contained in the cubic form $\Psi$. Indeed, we can reformulate the condition in Proposition~\ref{int2iff} as follows:

\begin{prop}
\label{critical}
 An infinitesimal Einstein deformation $h\in\varepsilon(g)$ is integrable to second order if and only if it is a critical point of the polynomial $\Psi$; that is, if
 \[\Psi(h,h,k)=0\qquad\text{for all }k\in\varepsilon(g).\]
\end{prop}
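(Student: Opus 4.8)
The plan is to deduce this directly from Koiso's criterion (Proposition~\ref{int2iff}) together with the complete symmetry of the trilinear form $(h_1,h_2,h_3)\mapsto\Ltwoinprod{\Eop_g''(h_1,h_2)}{h_3}$ recorded just above. First I would note that $\Eop_g''$ is symmetric as a bilinear map, being a second derivative, so the polarization of the homogeneous cubic $\Psi$ is precisely the symmetric trilinear form
\[\Psi(h_1,h_2,h_3)=2\Ltwoinprod{\Eop_g''(h_1,h_2)}{h_3},\]
which by construction satisfies $\Psi(h,h,h)=\Psi(h)$. It is this form that appears in the statement.

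Next I would differentiate $\Psi$ along $\varepsilon(g)$. For $h,k\in\varepsilon(g)$, bilinearity and symmetry of $\Eop_g''$ give
\[\Eop_g''(h+tk,h+tk)=\Eop_g''(h,h)+2t\,\Eop_g''(h,k)+t^2\,\Eop_g''(k,k),\]
so pairing with $h+tk$ yields, to first order,
\[\tfrac12\Psi(h+tk)=\Ltwoinprod{\Eop_g''(h,h)}{h}+t\bigl(2\Ltwoinprod{\Eop_g''(h,k)}{h}+\Ltwoinprod{\Eop_g''(h,h)}{k}\bigr)+O(t^2).\]
At this point the complete symmetry of the trilinear form is used to rewrite $\Ltwoinprod{\Eop_g''(h,k)}{h}=\Ltwoinprod{\Eop_g''(h,h)}{k}$, giving
\[\frac{d}{dt}\Big|_{t=0}\Psi(h+tk)=6\Ltwoinprod{\Eop_g''(h,h)}{k}=3\,\Psi(h,h,k).\]
(This is just the Euler-type identity $d\Psi_h(k)=3\Psi(h,h,k)$ for a homogeneous cubic, the symmetry guaranteeing that it holds even though $\Psi$ is a priori only visibly symmetric in its first two slots through $\Eop_g''$.)

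Finally I would conclude: $h$ is a critical point of $\Psi$ on $\varepsilon(g)$ exactly when $\Ltwoinprod{\Eop_g''(h,h)}{k}=0$ for every $k\in\varepsilon(g)$, i.e.~when $\Eop_g''(h,h)\perp_{L^2}\varepsilon(g)$, which by Proposition~\ref{int2iff} is precisely the condition that $h$ be integrable to second order. I do not anticipate any genuine obstacle; the argument is essentially formal. The only points needing care are that ``critical point'' is meant within $\varepsilon(g)$ (the domain of $\Psi$) and not in the full space $\Sy^2(M)$, and that the real content is the complete symmetry of $(h_1,h_2,h_3)\mapsto\Ltwoinprod{\Eop_g''(h_1,h_2)}{h_3}$, which has already been invoked from \cite{NS23,OddGrArxiv}.
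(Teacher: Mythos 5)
Your argument is correct and is exactly the route the paper takes (the paper states the proposition as an immediate reformulation of Proposition~\ref{int2iff}, resting on the complete symmetry of $(h_1,h_2,h_3)\mapsto\Ltwoinprod{\Eop_g''(h_1,h_2)}{h_3}$ cited from \cite{NS23,OddGrArxiv}). Your explicit verification of the Euler-type identity $d\Psi_h(k)=3\Psi(h,h,k)$ and the identification of ``critical point'' as meaning critical within $\varepsilon(g)$ are precisely the details left implicit there.
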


\subsection{The second order obstruction on symmetric spaces}

Let us from now on assume that $(M=G/K,g)$ is one of the spaces in Proposition~\ref{symmetricied} (i)--(iv), and let $\sigma\in\Sy^3(M)$ be the parallel symmetric $3$-tensor defined in \eqref{sigma}. In light of Proposition~\ref{iedparam}, we would like to express Koiso's obstruction in terms of Killing vector fields.

Let $\g=\k\oplus\m$ be the Cartan decomposition; as usual, we identify $\m\cong T_oM$ with the tangent space at the identity coset $o\in M$. Furthermore, denote with $Z_\k,Z_\m$ the respective projections of $Z\in \g$ to $\k,\m$.

Recall that on an irreducible symmetric space, the Killing vector fields are \emph{fundamental vector fields}, that is, the map
\[\g\to\isom(M,g),\quad Z\mapsto\tilde Z\quad\text{where}\qquad\tilde Z_p:=\frac{d}{dt}\big|_{t=0}\exp(tX).p,\quad p\in M,\]
is an isomorphism. Starting from a Killing field $\tilde Z$, the Lie algebra element $Z\in\g$ can be recovered as follows:
\begin{equation}
 \label{eq:killingcorresp}
 Z_\m=\tilde Z_o,\qquad\ad(Z_\k)\big|_\m=(\nabla\tilde Z)_o=\frac{1}{2}(d\tilde Z)_o,
\end{equation}
cf.~\cite[Prop.~10.1.4]{petersen}. With the identification $\g\cong\isom(M,g)$, we may view Koiso's obstruction as an invariant cubic form on $\g$, that is, $\Psi\in(\Sym^3\g^\ast)^G$. It remains to determine an expression for it in terms of Lie-algebraic data.

Let $h_Z:=\tilde Z\intprod\sigma\in\varepsilon(g)$ denote the IED associated to $Z\in\g$. Using $\nabla\tilde Z=\frac12d\tilde Z$ and $\nabla\sigma=0$, direct calculation leads to the identities
\begin{align}
 2d^\nabla h_Z(X,Y)&=\sigma(d\tilde Z(X),Y)-\sigma(d\tilde Z(Y),X),\label{eq:dnabla}\\
 2\FN{h_Z}{h_Z}(X,Y)&=-\sigma(d\tilde Z(\sigma(Z,X)),Y)+\sigma(dZ(\sigma(\tilde Z,Y)),X)\notag\\
 &\quad+\sigma(\tilde Z,\sigma(d\tilde Z(X),Y))-\sigma(\tilde Z,\sigma(d\tilde Z(Y),X)).\label{eq:FNbracket}
\end{align}

We are now ready to express $\Psi(h_Z)$ as a polynomial integral in $Z$. Equip $G$ with a Haar measure such that $\int_Gdx=\int_M\vol_{g}$.

\begin{lem}
\label{psipol}
 Let the polynomials $Q,R,R_1,\ldots, R_4\in(\Sym^3\g^\ast)^K$ be defined by
 \begin{align*}
  Q(Z)&=\sum_{i,j,k}\sigma(Z_\m,E_i,E_j)\sigma(Z_\m,E_i,E_k)\sigma(Z_\m,E_j,E_k),\\
  R_1(Z)&=\sum_{i,j,k,l}\sigma(Z_\m,E_i,E_j)\sigma([Z_\k,E_i],E_k,E_l)\sigma([Z_\k,E_k],E_j,E_l),\\
  R_2(Z)&=\sum_{i,j,k,l}\sigma(Z_\m,E_i,E_j)\sigma([Z_\k,E_i],E_k,E_l)\sigma([Z_\k,E_j],E_k,E_l),
 \end{align*}
 \begin{align*}
  R_3(Z)&=\sum_{i,j,k,l}\sigma(Z_\m,E_i,E_j)\sigma(E_i,E_k,E_l)\sigma(E_j,E_k,[Z_\k,[Z_\k,E_l]]),\\
  R_4(Z)&=\sum_{i,j,k,l}\sigma(Z_\m,E_i,E_j)\sigma(E_i,E_k,E_l)\sigma(E_j,[Z_\k,E_k],[Z_\k,E_l]),\\
  R&=2R_1-2R_2-2R_3+2R_4,
 \end{align*}
 where $(E_i)$ is an orthonormal basis of $\m$. Then the obstruction polynomial $\Psi\in(\Sym^3\g^\ast)^G$ is given by
 \[\Psi(h_Z)=\int_G\left(3R(\Ad(x^{-1})Z)-EQ(\Ad(x^{-1})Z)\right)dx,\qquad Z\in\g.\]
\end{lem}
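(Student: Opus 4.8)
The plan is to turn the formula \eqref{eq:psiexpr} for $\Psi(h)$ -- an integral over $M$ -- into an integral over $G$ by exploiting homogeneity, and then to plug in the pointwise expressions \eqref{eq:dnabla} and \eqref{eq:FNbracket} evaluated at the base point $o\in M$. First I would recall that $M=G/K$ is isotropy irreducible, so the $G$-invariant metric is unique up to scale, and the integrand in \eqref{eq:psiexpr} is a $G$-invariant function on $M$; hence for any $p\in M$ we may write $\Psi(h_Z)=\Vol(M)\cdot f(p)$ where $f$ is the (constant) value of the integrand. More usefully, picking $p=x.o$ for $x\in G$ and integrating over $x\in G$ with the normalization $\int_G dx=\int_M\vol_g$, we get $\Psi(h_Z)=\int_G(\text{integrand of }\eqref{eq:psiexpr}\text{ at }x.o)\,dx$. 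The key observation is that the integrand evaluated at $x.o$, applied to the Killing field $\tilde Z$, equals the integrand evaluated at $o$ applied to the Killing field $\widetilde{\Ad(x^{-1})Z}$, because $x$ acts as an isometry carrying $\tilde Z$ to $\widetilde{\Ad(x)Z}$ (equivalently, $x^{-1}$ pulls everything back to $o$ and conjugates $Z$ by $\Ad(x^{-1})$). This reduces the whole problem to computing the integrand of \eqref{eq:psiexpr} at the single point $o$, as a function of $Z\in\g$.

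Next I would carry out that pointwise computation at $o$. The term $-E\tr(h_Z^3)$ at $o$: since $h_Z=\tilde Z\intprod\sigma$ and $(\tilde Z)_o=Z_\m$ by \eqref{eq:killingcorresp}, the endomorphism $(h_Z)_o$ of $\m$ is $Y\mapsto \sigma(Z_\m,\,\cdot\,,Y)$ raised by the metric, so $\tr(h_Z^3)$ at $o$ is exactly $\sum_{i,j,k}\sigma(Z_\m,E_i,E_j)\sigma(Z_\m,E_j,E_k)\sigma(Z_\m,E_k,E_i)=Q(Z)$ for an orthonormal basis $(E_i)$ of $\m$. For the main term $3\langle\FN{h_Z}{h_Z},d^\nabla h_Z\rangle$ at $o$, I would substitute \eqref{eq:dnabla} and \eqref{eq:FNbracket}, using $(\nabla\tilde Z)_o=\ad(Z_\k)|_\m$ (so $d\tilde Z(X)$ at $o$ becomes $2[Z_\k,X]$ applied appropriately, with the factor $2$ accounting for $\nabla\tilde Z=\frac12 d\tilde Z$) and $(\tilde Z)_o=Z_\m$. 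Expanding the contraction $\langle\,\cdot\,,\cdot\,\rangle$ over the orthonormal basis $(E_i)$ of $\m=T_oM$ with the stated inner product conventions on $\Omega^1(M,TM)$ and $\Omega^2(M,TM)$, one gets a sum of terms each of which is a contraction of three copies of $\sigma$ with $Z_\m$ appearing once and $Z_\k$ appearing (via two nested brackets) twice; careful bookkeeping of which slot each $Z_\k$ lands in and which pairs of indices get contracted yields precisely the four polynomials $R_1,R_2,R_3,R_4$, and the signs and the coefficient $2$ come out of the antisymmetrizations in \eqref{eq:dnabla}–\eqref{eq:FNbracket}. Collecting, the integrand at $o$ is $3R(Z)-EQ(Z)$, and combined with the averaging step this gives the claimed formula.

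A few subsidiary points need care. One must check that $Q,R_1,\dots,R_4$ are genuinely $K$-invariant so that the notation $(\Sym^3\g^\ast)^K$ is justified; this follows because $\sigma$ is $K$-invariant and the orthonormal basis $(E_i)$ can be transported by $K$ (the contractions are basis-independent). One must also verify that $\Psi(h_Z)$ depends only on the $\Ad(K)$-orbit structure in the right way so that the $G$-integral is well-defined; this is automatic from $\Psi\in(\Sym^3\g^\ast)^G$, which the excerpt already records. Finally, one should confirm the normalization of Haar measure is consistent with how $\vol_g$ was used in \eqref{eq:psiexpr}, which is exactly the hypothesis $\int_G dx=\int_M\vol_g$.

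The main obstacle I anticipate is the bookkeeping in the second step: correctly expanding \eqref{eq:FNbracket} and pairing it against \eqref{eq:dnabla} under the specific $\frac12$-normalized inner product on $\Omega^2(M,TM)$, keeping track of the four terms of the Frölicher--Nijenhuis bracket, the two terms of $d^\nabla h_Z$, the $2$'s from $\nabla\tilde Z=\frac12 d\tilde Z$, and the antisymmetrization signs, so that the result collapses to exactly $2R_1-2R_2-2R_3+2R_4$ with no stray terms. Several of the a priori distinct contractions will coincide after relabeling orthonormal-basis indices or using the total symmetry of $\sigma$, and recognizing these coincidences is where the computation is delicate; the rest is routine once the pointwise identities \eqref{eq:killingcorresp}, \eqref{eq:dnabla}, \eqref{eq:FNbracket} are in hand.
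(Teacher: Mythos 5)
Your proposal follows essentially the same route as the paper: rewrite the $M$-integral as a $G$-integral, use the isometry action together with \eqref{eq:killingcorresp} to replace evaluation of the integrand at $x.o$ on $(\tilde Z,d\tilde Z)$ by evaluation at $o$ with $Z$ replaced by $\Ad(x^{-1})Z$, and then identify the pointwise integrand at $o$ with $3R-EQ$. One incidental claim is false --- the integrand of \eqref{eq:psiexpr} is \emph{not} a constant ($G$-invariant) function on $M$, since $\tilde Z$ and hence $h_Z$ are not $G$-invariant (indeed, were it constant, the $\Ad(x^{-1})$-averaging in the conclusion would be vacuous) --- but you do not rely on it, and the equivariance argument you actually run, which is the paper's, is correct.
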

\begin{proof}
 Any function $f\in C^\infty(M)$ gives rise to a $K$-invariant function $\hat f\in C^\infty(G)^K$ by setting $\hat f(x)=f(x.o)$, and we have
 \[\int_Mf\,\vol_{g}=\int_G\hat f(x)\,dx.\]
 More generally, any section of a homogeneous vector bundle $s\in\Gamma(G\times_KV)$ corresponds to a $K$-equivariant function $\hat s\in C^\infty(G,V)^K$ by $s(x.o)=[x,\hat s(x)]$. From \eqref{eq:psiexpr}, \eqref{eq:dnabla} and \eqref{eq:FNbracket}, we see that the integrand in $\Psi(h_Z)$ is of the form $I(\tilde Z,d\tilde Z)$ for a certain section $I\in\Gamma(\Sym^3(TM\oplus\Lambda^2TM)^\ast)$. Thus
 \[\Psi(h_Z)=\int_MI(\tilde Z,d\tilde Z)\,\vol_{g}=\int_G\hat I(x)\circ\hat{\mathbf{Z}}(x)\,dx,\]
 where $\hat I\in C^\infty(G,\Sym^3(\m\oplus\Lambda^2\m)^\ast)^K$ and $\hat{\mathbf{Z}}\in C^\infty(G,\m\oplus\Lambda^2\m)^K$ are the functions on $G$ associated to the sections $I$ and $(\tilde Z,d\tilde Z)$, respectively.
 By \eqref{eq:killingcorresp}, under the inclusion
 \[\iota:\quad\g=\m\oplus\k\hookrightarrow\m\oplus\Lambda^2\m,\qquad Z\mapsto(Z_\m,\ad(Z_\k)),\]
 we can view $\hat{\mathbf{Z}}$ as the image of a function in $C^\infty(G,\g)^G$, which under the usual identification $C^\infty(G,\g)\cong\X(G)$ corresponds exactly to the \emph{right-invariant} vector field on $G$ generated by $Z$; namely
 \[\hat{\mathbf{Z}}(x)=\iota(\Ad(x^{-1})Z),\qquad x\in G.\]
 The section $I$, consisting of contractions of the $G$-invariant tensor $\sigma$, is itself $G$-invariant, meaning that $\hat I\in(\Sym^3(\m\oplus\Lambda^2\m)^\ast)^K$ is a constant function on $G$. By combining \eqref{eq:psiexpr}--\eqref{eq:FNbracket}, a quick calculation shows that
 \[\hat I\circ \iota = 3R-EQ\]
 (as polynomial functions), with $Q,R\in(\Sym^3\g^\ast)^K$ as defined previously. This finishes the proof.
\end{proof}

Up to a volume factor, this amounts to ``averaging'' the cubic polynomial $2EQ-6R$ to produce a $G$-invariant polynomial on $\g$. This averaging operation is nothing but orthogonal projection of the integrand to $(\Sym^3\g^\ast)^G$ with respect to any $G$-invariant inner product on $\Sym^3\g^\ast$.

To proceed, we examine the space $(\Sym^3\g^\ast)^G$ of invariant cubic forms. In the cases (i), (iii) of Prop.~\ref{symmetricied}, we have $G=\SU(m)$ and $\dim(\Sym^3\g^\ast)^G=1$, as already mentioned in \ref{sec:suncubic}. Therefore $\Psi$ must be proportional to the invariant $P$. We obtain the following corollary:

\begin{kor}
\label{orthtoP0}
 On the spaces $\SU(n)/\SO(n)$ and $\SU(2n)/\Sp(n)$, $n\geq3$, the Koiso obstruction $\Psi$ vanishes identically if and only if
 \[\langle 3R-EQ,P\rangle=0.\]
\end{kor}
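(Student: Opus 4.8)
The plan is to combine the description of $\Psi$ as a group average from Lemma~\ref{psipol} with the one‑dimensionality of $(\Sym^3\g^\ast)^G$. Since $G=\SU(m)$ with $m=n\geq3$ or $m=2n\geq6$, the space $(\Sym^3\g^\ast)^G$ is spanned by the cubic $P$ of \eqref{eq:suncubic} (as recalled in \ref{sec:suncubic}), and $\Psi$, being a $G$-invariant cubic form on $\varepsilon(g)\cong\g$, must equal $\lambda P$ for some $\lambda\in\R$. The corollary then follows once I show that $\lambda$ vanishes exactly when $\langle 3R-EQ,P\rangle$ does, so the whole argument reduces to computing the scalar $\lambda$.

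To extract $\lambda$, I would pair $\Psi$ with $P$ using the inner product \eqref{eq:inprod} on $\Sym^3\g^\ast$, which is $G$-invariant because the trace form on $\su(m)$ is $\Ad$-invariant. On one side, $\langle\Psi,P\rangle=\lambda\,|P|^2$. On the other, writing $x\cdot p$ for the action $(x\cdot p)(Z)=p(\Ad(x^{-1})Z)$ on polynomials, Lemma~\ref{psipol} says $\Psi=\int_G x\cdot(3R-EQ)\,dx$; hence, using invariance of the inner product together with $x^{-1}\cdot P=P$,
\[\langle\Psi,P\rangle=\int_G\big\langle x\cdot(3R-EQ),\,P\big\rangle\,dx=\int_G\big\langle 3R-EQ,\,x^{-1}\cdot P\big\rangle\,dx=\Vol(M,g)\,\langle 3R-EQ,P\rangle,\]
since the Haar measure is normalized so that $\int_G dx=\int_M\vol_g=\Vol(M,g)$. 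Comparing the two expressions gives $\lambda=\Vol(M,g)\,\langle 3R-EQ,P\rangle\,/\,|P|^2$.

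Finally I would invoke Proposition~\ref{p0norm}: for $m\geq3$ one has $|P|^2=\tfrac{2(m^2-1)(m^2-4)}{m}>0$, so the division above is legitimate and $\lambda=0$ if and only if $\langle 3R-EQ,P\rangle=0$; since $\Psi=\lambda P$ with $P\neq0$, this is exactly the claimed equivalence. There is really no hard step here: the only points needing care are the harmless positive bookkeeping factor $\Vol(M,g)$ coming from the Haar normalization, and the observation (used implicitly in the displayed computation) that averaging over $G$ is, up to that factor, orthogonal projection onto $(\Sym^3\g^\ast)^G$, which is precisely why testing against the invariant $P$ loses no information.
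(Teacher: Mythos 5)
Your argument is correct and follows the same route as the paper: Lemma~\ref{psipol} exhibits $\Psi$ as the $G$-average of $3R-EQ$, which (up to the positive volume factor) is orthogonal projection onto the one-dimensional space $(\Sym^3\g^\ast)^G=\R P$, so $\Psi$ is the multiple $\langle 3R-EQ,P\rangle/|P|^2$ of $P$ (times $\Vol(M,g)$) and vanishes precisely when that pairing does, using $|P|^2>0$ from Proposition~\ref{p0norm}. Your explicit pairing computation via $\langle x\cdot(3R-EQ),P\rangle=\langle 3R-EQ,x^{-1}\cdot P\rangle=\langle 3R-EQ,P\rangle$ is just a careful spelling-out of the paper's ``averaging is projection'' remark, so there is nothing substantively different to report.
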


By Proposition~\ref{critical}, integrability to second order is governed by the polynomial $\Psi$. If $\Psi$ vanishes identically, then clearly every IED is integrable to second order. If $\g=\su(m)$ and $\Psi\neq0$, i.e.~it is a nonzero multiple of $P$, the set of IED which are integrable to second order is the critical locus of $P$, which is described in \cite[Prop.~6.4]{OddGrArxiv}. For odd $m$, this set is just the origin, meaning that no IED is integrable, and thus the Einstein metric is rigid.

\begin{bem}
In \cite{BHMW} it was noted that $\dim(\Sym^3\g^\ast)^G=2$ for case (ii), the Lie group $\SU(n)$. The authors showed that $\Psi$ is nonzero and explicitly gave its critical locus.

In case (iv) ($\rmE_6/\rmF_4$), the space  $(\Sym^3\g^\ast)^G$ is trivial. This means that $\Psi$ automatically vanishes, hence every infinitesimal Einstein deformation is integrable to second order.
\end{bem}

Returning to cases (i) and (iii), we take a closer look at the space $(\Sym^3\g^\ast)^K$ that the integrand lives in:

\begin{lem}
\label{Sym3gK}
 Let $\g=\su(m)$, and either $\k=\so(n)$, $n=m$, or $\k=\sp(n)$, $m=2n$. Then
 \[(\Sym^3\g^\ast)^K=(\Sym^3\m^\ast)^K\oplus(\m^\ast\otimes\Sym^2\k^\ast)^K,\]
the spaces $(\Sym^3\m^\ast)^K$ and $(\m^\ast\otimes\Sym^2\k^\ast)^K$ are one-dimensional, and spanned by the respective restrictions
 \[P_{\m\m\m}\in(\Sym^3\m^\ast)^K,\qquad P_{\k\k\m}\in(\m^\ast\otimes\Sym^2\k^\ast)^K\]
 of the cubic form $P=P_{\m\m\m}+P_{\k\k\m}\in(\Sym^3\g^\ast)^G$.
\end{lem}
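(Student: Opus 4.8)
The plan is to decompose $(\Sym^3\g^\ast)^K$ using the Cartan splitting $\g=\k\oplus\m$ and the fact that $\m$ is an irreducible $K$-module with no nontrivial $K$-invariants in low symmetric powers apart from the ones coming from $\sigma$. Writing $\Sym^3\g^\ast$ as a direct sum of the four pieces $\Sym^3\m^\ast$, $\Sym^2\m^\ast\otimes\k^\ast$, $\m^\ast\otimes\Sym^2\k^\ast$, and $\Sym^3\k^\ast$, I would first argue that taking $K$-invariants kills the two ``odd in $\k$'' summands. For this I would invoke the symmetric-space involution $\theta$, under which $\m$ has eigenvalue $-1$ and $\k$ has eigenvalue $+1$; since $\theta$ is induced by an element of (the adjoint form of) $K$ in cases (i) and (iii) — or more elementarily, since $-\Id_\m\oplus\Id_\k$ lies in the image of $K$ acting on $\g$ for these two spaces — any $K$-invariant cubic form must be even under $\theta$, hence supported on the two summands $\Sym^3\m^\ast$ and $\m^\ast\otimes\Sym^2\k^\ast$. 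This gives the claimed direct sum decomposition $(\Sym^3\g^\ast)^K=(\Sym^3\m^\ast)^K\oplus(\m^\ast\otimes\Sym^2\k^\ast)^K$.

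Next I would compute the dimension of each of the two surviving summands. For $(\Sym^3\m^\ast)^K$, this is already quoted in the excerpt: by \cite[Prop.~2.1]{GGiso}, the space $\Sy^3(M)^G\cong(\Sym^3\m^\ast)^K$ of invariant symmetric $3$-tensors is one-dimensional, and it is spanned by $\sigma$, i.e.\ by $P_{\m\m\m}$. For $(\m^\ast\otimes\Sym^2\k^\ast)^K$, I would use Frobenius reciprocity / Schur: this space is isomorphic to $\Hom_K(\m,\Sym^2\k)$, which has dimension equal to the multiplicity of the irreducible $K$-module $\m$ inside $\Sym^2\k$. In case (i), $\k=\so(n)$ acting on $\m=\Sym^2_0\R^n$ (traceless symmetric matrices), and $\Sym^2\so(n)$ decomposes via the classical plethysm; one checks that the representation $\Sym^2_0\R^n$ occurs with multiplicity exactly one (it appears inside $\so(n)\wedge\so(n)$-type contractions, concretely via $A\mapsto\tr$-free part of the anticommutator or of $A\cdot(\text{something})$). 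The analogous statement in case (iii), $\k=\sp(n)$ acting on $\m=$ the relevant irreducible inside $\Lambda^2$ of the standard $\sp(n)$-module, is handled the same way. Alternatively — and this is probably the cleanest route — I would avoid case-by-case plethysm entirely by observing that $P_{\k\k\m}$ is a nonzero element of $(\m^\ast\otimes\Sym^2\k^\ast)^K$ (nonzero because $P=P_{\m\m\m}+P_{\k\k\m}$ is nonzero and $P_{\m\m\m}$ alone cannot equal $P$ since $P$ genuinely involves $\k$-entries, as one sees from the polarized formula $P(X,Y,Z)=\i\tr(\{X,Y\}Z)$ evaluated on $\k$-elements), hence the dimension is at least one; then I would pin it down to exactly one by a Casimir/character count or by the sandwich-operator techniques of Section~\ref{sec:sandwich}, which are precisely tailored to such multiplicity computations.

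The main obstacle I anticipate is the upper bound $\dim(\m^\ast\otimes\Sym^2\k^\ast)^K\leq1$, i.e.\ showing the multiplicity of $\m$ in $\Sym^2\k$ is at most one in both families simultaneously. The lower bound and the vanishing of the odd summands are routine. One safe way to get the upper bound uniformly is to exploit the Jordan-algebraic picture of Section~\ref{sec:Section3}: $\k=\Der J$ and $\m=\i J_0$, and $\Sym^2\k$ as a $\Der J$-module can be analyzed through the structure algebra $\Str_0 J=\k\oplus\m$, whose bracket relations force any $\k$-equivariant map $\m\to\Sym^2\k$ to be a scalar multiple of the one built from the Jordan product (equivalently, from $\sigma$ and the Lie bracket $\m\times\m\to\k$). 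Concretely, one writes down the map $X\mapsto\big((Y,Y')\mapsto\sigma(X,[Y,Y']_\k\cdot\text{applied appropriately})\big)$ coming from the $P_{\k\k\m}$-component and shows by an $\ad$-invariance argument (or a direct sandwich-operator eigenvalue computation) that it spans. I would present the argument in this Jordan-unified form to keep cases (i) and (iii) on the same footing, and defer the brute-force plethysm check to a remark or footnote.
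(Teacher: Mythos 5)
The paper states this lemma without proof, so I can only assess your argument on its own merits; as written it has one fatal flaw and one substantive gap. The fatal flaw is the $\theta$-parity step used to discard $(\Sym^3\k^\ast)^K$ and $((\Sym^2\m^\ast)\otimes\k^\ast)^K$. Its premise is false: $-\Id_\m\oplus\Id_\k$ is \emph{not} induced by any element of $K$. Since $\C^m$ is $K$-irreducible in both cases, any $A\in K$ with $\Ad(A)|_\k=\Id$ centralizes $\k$ in $\GL(m,\C)$, hence is a scalar and acts trivially on all of $\g$; equivalently, the Cartan involution here is the \emph{outer} automorphism $X\mapsto\bar X$ of $\su(m)$ (resp.\ its composite with an inner one), so it lies neither in $\Ad(K)$ nor in $\Ad(G)$. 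Worse, even granting the premise your parity bookkeeping is reversed: the summands with an odd number of $\m$-slots are precisely $\Sym^3\m^\ast$ and $\m^\ast\otimes\Sym^2\k^\ast$ — the two you want to \emph{keep} — and these are $\theta$-\emph{odd}; demanding $\theta$-evenness would annihilate them and force $\sigma=P_{\m\m\m}=0$, which is absurd (this contradiction is itself a proof that $\theta\notin\Ad(K)$). The discarded summands must instead be handled directly: $((\Sym^2\m^\ast)\otimes\k^\ast)^K\cong\Hom_K(\k,\Sym^2\m)$ vanishes because $\k$ does not occur in $\Sym^2\m$ (a short plethysm in each case), while $(\Sym^3\k^\ast)^K$ is the space of cubic $\ad$-invariants of $\k$, which vanishes for $\sp(n)$ and for $\so(n)$ with $n\neq6$ — but \emph{not} for $\so(6)$, where the Pfaffian is a nonzero cubic $\SO(6)$-invariant. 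So the displayed equality genuinely fails at $n=6$; this is harmless for the rest of the paper (only the one-dimensionality of the two named summands and their mutual orthogonality are used downstream), but it rules out any soft, uniform argument of the kind you propose.

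The second gap is that the crux of the surviving claim, $\dim(\m^\ast\otimes\Sym^2\k^\ast)^K=\dim\Hom_K(\m,\Sym^2\k)\leq1$, is never actually established: you offer three possible methods without carrying any out, and two of them do not obviously bound a multiplicity — sandwich operators compute eigenvalues on summands already known to be irreducible, they do not count occurrences, and the Jordan-algebraic "structure equation" argument is only an outline. The plethysm route does work and is short: $\Sym^2(\Lambda^2\C^n)\cong S^{(2,2)}\oplus S^{(1,1,1,1)}$ as a $\GL$-module, and restricting to $\O(n)$ one finds that $\Sym^2_0\R^n$ occurs exactly once (inside the $S^{(2,2)}$ piece); likewise $\Sym^2(\Sym^2\C^{2n})\cong S^{(4)}\oplus S^{(2,2)}$ restricted to $\Sp(n)$ contains $\Lambda^2_0\C^{2n}$ exactly once. (All modules involved are of real type, so real and complex multiplicities agree.) Your remaining points are fine: the identification of $(\Sym^3\m^\ast)^K$ with the span of $\sigma$ via Gasqui--Goldschmidt is exactly what the paper itself invokes, and your lower bound $P_{\k\k\m}\neq0$ follows by exhibiting $X\in\k$ with $X^2$ not proportional to the identity.
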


\begin{kor}
\label{constants}
 There exists constants $\kappa,\lambda_i,\lambda\in\R$ such that $Q=\kappa P_{\m\m\m}$, $R_i=\lambda_i P_{\k\k\m}$ for $i=1,\ldots,4$, and $R=\lambda P_{\k\k\m}$.
\end{kor}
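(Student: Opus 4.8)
The plan is to read the statement off directly from Lemma~\ref{Sym3gK}, once we identify which summand each of the polynomials $Q,R_1,\ldots,R_4$ inhabits. Throughout, $\Psi(h_Z)$ and its constituents are regarded as polynomials in $Z\in\g$, and we use the Cartan splitting $\g=\m\oplus\k$ to write $Z=Z_\m+Z_\k$. The key observation is that each of $Q,R_1,\ldots,R_4$ is homogeneous of a \emph{fixed bidegree} with respect to this splitting, which, together with the one-dimensionality statements in Lemma~\ref{Sym3gK}, forces proportionality.

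First I would treat $Q$. Its defining formula depends on $Z$ only through $Z_\m$, and is homogeneous of degree $3$ in $Z_\m$; moreover it is assembled from contractions of the $K$-invariant tensor $\sigma$ over an orthonormal basis $(E_i)$ of $\m$, and since $K$ acts orthogonally on $\m$ this makes $Q$ a $K$-invariant polynomial. Hence $Q\in(\Sym^3\m^\ast)^K$, which by Lemma~\ref{Sym3gK} is one-dimensional and spanned by $P_{\m\m\m}$; therefore $Q=\kappa P_{\m\m\m}$ for a unique $\kappa\in\R$.

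Next I would inspect the $R_i$. In each of the four formulas the single factor $\sigma(Z_\m,E_i,E_j)$ contributes exactly one power of $Z_\m$, while every other occurrence of $Z$ sits inside a bracket $[Z_\k,-]$ (or a double bracket $[Z_\k,[Z_\k,-]]$), which, because $[\k,\m]\subseteq\m$, keeps its argument in $\m$ while consuming one, respectively two, powers of $Z_\k$. Thus each $R_i$ is linear in $Z_\m$ and quadratic in $Z_\k$, i.e.\ it lies in the image of $\m^\ast\otimes\Sym^2\k^\ast$ inside $\Sym^3\g^\ast$, and it is $K$-invariant by the same argument as for $Q$. Since $(\m^\ast\otimes\Sym^2\k^\ast)^K$ is one-dimensional and spanned by $P_{\k\k\m}$ by Lemma~\ref{Sym3gK}, we get $R_i=\lambda_i P_{\k\k\m}$ for unique $\lambda_i\in\R$. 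Finally $R=2R_1-2R_2-2R_3+2R_4=2(\lambda_1-\lambda_2-\lambda_3+\lambda_4)\,P_{\k\k\m}$, so the claim holds with $\lambda:=2(\lambda_1-\lambda_2-\lambda_3+\lambda_4)$.

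The only genuine content has already been absorbed into Lemma~\ref{Sym3gK}; the step that warrants a moment of care is the bidegree bookkeeping for the $R_i$, specifically verifying that no stray terms of bidegree $(2,1)$, $(0,3)$, or $(3,0)$ are hidden in the formulas. Here this is immediate: every bracket is against $Z_\k$ and, by $[\k,\m]\subseteq\m$, leaves its $\m$-valued argument in $\m$, so the tally of powers is rigid and each $R_i$ is cleanly of bidegree $(1,2)$.
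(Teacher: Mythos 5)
Your argument is correct and is exactly the (implicit) argument the paper intends: the corollary is stated as an immediate consequence of Lemma~\ref{Sym3gK}, with the only content being the bidegree bookkeeping you carry out, namely that $Q$ has bidegree $(3,0)$ and each $R_i$ has bidegree $(1,2)$ with respect to $\g=\m\oplus\k$ (using $[\k,\m]\subseteq\m$), so each lands in the corresponding one-dimensional summand of $(\Sym^3\g^\ast)^K$.
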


Because the Cartan decomposition $\g=\k\oplus\m$ is orthogonal for our chosen inner product on $\g$, the summands $\Sym^3\m^\ast$ and $\m^\ast\otimes\Sym^2\k^\ast$ will be orthogonal for the $G$-invariant inner product on $\Sym^3\g^\ast$ given in \eqref{eq:inprod}; in particular $P_{\m\m\m}\perp P_{\k\k\m}$. Combining with Lemma~\ref{Sym3gK} and Corollary~\ref{constants}, we can rewrite the condition in Corollary~\ref{orthtoP0}:

\begin{kor}
\label{obstrequiv}
 On the spaces $\SU(n)/\SO(n)$ and $\SU(2n)/\Sp(n)$, $n\geq3$, the Koiso obstruction $\Psi$ vanishes identically if and only if
 \[E\kappa|P_{\m\m\m}|^2-3\lambda|P_{\k\k\m}|^2=0.\]
\end{kor}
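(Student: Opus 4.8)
The plan is to obtain this purely by assembling the structural statements already established: Corollary~\ref{orthtoP0}, Lemma~\ref{Sym3gK}, Corollary~\ref{constants}, and the orthogonality of the Cartan decomposition. First I would recall that, by Corollary~\ref{orthtoP0}, the identical vanishing of $\Psi$ on $\SU(n)/\SO(n)$ and $\SU(2n)/\Sp(n)$ is equivalent to the single scalar condition $\langle 3R-EQ,P\rangle=0$, where $\langle\,\cdot\,,\cdot\,\rangle$ is the inner product on $\Sym^3\g^\ast$ from \eqref{eq:inprod}. So the whole question reduces to evaluating this one pairing.

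Next I would feed in the decomposition of Lemma~\ref{Sym3gK}, which writes $P=P_{\m\m\m}+P_{\k\k\m}$ with $P_{\m\m\m}$ spanning the line $(\Sym^3\m^\ast)^K$ and $P_{\k\k\m}$ spanning the line $(\m^\ast\otimes\Sym^2\k^\ast)^K$. By Corollary~\ref{constants} the two pieces of the integrand land in these lines: $Q=\kappa P_{\m\m\m}$ and $R=\lambda P_{\k\k\m}$, so $3R-EQ=3\lambda P_{\k\k\m}-E\kappa P_{\m\m\m}$ is again a sum of one term from each summand.

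The one substantive point is orthogonality. Since the Cartan decomposition $\g=\k\oplus\m$ is orthogonal for the chosen invariant inner product on $\g$, the induced inner product \eqref{eq:inprod} on $\Sym^3\g^\ast$ makes the summands $\Sym^3\m^\ast$, $\Sym^2\m^\ast\otimes\k^\ast$, $\m^\ast\otimes\Sym^2\k^\ast$, $\Sym^3\k^\ast$ pairwise orthogonal; in particular $P_{\m\m\m}\perp P_{\k\k\m}$. Then
\[\langle 3R-EQ,P\rangle=\langle 3\lambda P_{\k\k\m}-E\kappa P_{\m\m\m},\,P_{\m\m\m}+P_{\k\k\m}\rangle=3\lambda|P_{\k\k\m}|^2-E\kappa|P_{\m\m\m}|^2,\]
so $\Psi$ vanishes identically if and only if $E\kappa|P_{\m\m\m}|^2-3\lambda|P_{\k\k\m}|^2=0$, which is the claim.

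I do not expect any genuine obstacle here: the corollary is the formal consequence of the preceding lemmas, and the only care needed is the sign bookkeeping in the last display. The real work that remains — which I would defer to Section~\ref{sec:Section5} — is the explicit computation of the four scalars $\kappa$, $\lambda$, $|P_{\m\m\m}|^2$, $|P_{\k\k\m}|^2$; Proposition~\ref{p0norm} already yields $|P|^2=|P_{\m\m\m}|^2+|P_{\k\k\m}|^2$, so only the splitting of that norm and the values of $\kappa$ and $\lambda$ are left, and that is precisely where the sandwich operators of Section~\ref{sec:Section2} will be used.
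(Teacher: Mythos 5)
Your argument is correct and is precisely the paper's own reasoning: reduce to the scalar condition of Corollary~\ref{orthtoP0}, substitute $Q=\kappa P_{\m\m\m}$ and $R=\lambda P_{\k\k\m}$ from Corollary~\ref{constants}, and use the orthogonality $P_{\m\m\m}\perp P_{\k\k\m}$ induced by the orthogonal Cartan decomposition to evaluate $\langle 3R-EQ,P\rangle=3\lambda|P_{\k\k\m}|^2-E\kappa|P_{\m\m\m}|^2$. Nothing is missing, and the sign bookkeeping is right.
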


It remains to determine the quantities $|P_{\m\m\m}|^2,|P_{\k\k\m}|^2,\kappa$, and $\lambda$ for each of the spaces $\SU(n)/\SO(n)$ and $\SU(2n)/\Sp(n)$. It turns out that these calculations are simplified by the use of the sandwich operators introduced in \ref{sec:sandwich}.

\section{Norms and ratios of polynomials}
\label{sec:Section5}
\subsection{Relevant constants}

Let us record a few constants related to the spaces $\SU(n)/\SO(n)$ and $\SU(2n)/\Sp(n)$. We handle both cases at the same time. From here on, let $\g=\su(m)$ with either $m=n$, $\k=\so(n)$, $\m=\i\Herm_0(n,\R)$ or $m=2n$, $\k=\sp(n)$, $\m=\i\Herm_0(n,\H)$. On $\g$ and its subspaces, the implied inner product is always minus the trace form, which by \eqref{eq:killingformsu} is $-\frac{1}{2m}B_\g$. It follows then from \cite[Prop.~7.93]{besse} that the Einstein constant of $(M,g)$ is $E=m$.

The other relevant Killing forms are related to the trace form by
\begin{align}
  B_{\so(n)}&=(n-2)\tr,&B_{\sp(n)}&=2(n+1)\tr.
  \label{eq:killingforms}
\end{align}

\begin{lem}
\label{cask}
The quadratic Casimir constants of $\k$ on the defining representation and on $\g=\k\oplus\m$ are given as follows.
\begin{enumerate}[\upshape(i)]
 \item For $\k=\so(n)$:
 \[\Cas^\k_{\R^n}=\frac{n-1}{2},\qquad\Cas^\k_\k=n-2,\qquad\Cas^\k_\m=n.\]
 \item For $\k=\sp(n)$:
 \[\Cas^\k_{\C^{2n}}=\frac{2n+1}{2},\qquad\Cas^\k_\k=2(n+1),\qquad\Cas^\k_\m=2n.\]
\end{enumerate}
\end{lem}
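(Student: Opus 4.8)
The plan is to compute each Casimir constant from Freudenthal's formula \eqref{eq:freudenthal}, combined with the elementary identity relating the Casimir operator to Killing forms. Concretely, if $\mathrm{Cas}^{\k,B_\k}$ denotes the Casimir operator built from the negative Killing form $-B_\k$, then it is the identity on the adjoint representation, so for our inner product (minus the trace form), the relations \eqref{eq:killingforms} give $\Cas^\k_V = c_\k^{-1}\cdot(\text{Casimir eigenvalue w.r.t. }-B_\k \text{ on }V)$ where $c_\k$ is the proportionality constant, i.e.\ $c_{\so(n)}=n-2$ and $c_{\sp(n)}=2(n+1)$. This immediately yields $\Cas^\k_\k = c_\k$ in both cases (the values $n-2$ and $2(n+1)$ claimed), since the Casimir w.r.t.\ $-B_\k$ is $1$ on $\k$.

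For the remaining constants I would proceed representation by representation. For $\k=\so(n)$ acting on $\R^n$ (the vector representation): this is a well-known computation — using Freudenthal with highest weight $\omega_1$, or more simply by taking a trace of $\Cas^\k_{\R^n}$ against an orthonormal basis of $\so(n)$ exactly as in the proof of Lemma~\ref{cassun}, one gets $\tr(\Cas^\k_{\R^n}) = \dim\so(n)\cdot(\text{normalization})$; calibrating the trace form normalization on $\so(n)\subset\End\R^n$ against $-\tr$ gives $\Cas^\k_{\R^n}=\frac{n-1}{2}$. Similarly for $\k=\sp(n)\subset\End\C^{2n}$ acting on the defining representation $\C^{2n}$, the same trace computation gives $\Cas^\k_{\C^{2n}}=\frac{2n+1}{2}$. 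For $\Cas^\k_\m$, I would use the isotropy representation: since $\g=\k\oplus\m$ with $\g=\su(m)$, we have $\Cas^\g_\g=2m$ by Lemma~\ref{cassun}, and for a symmetric space the Casimir of $\g$ on $\g$ restricted to $\m$ decomposes via $\Cas^\g_\g|_\m = \Cas^\k_\m + (\text{the } \m\text{-to-}\m \text{ part of } -\sum \ad(E_i)^2)$; more directly, $\m$ is an explicit representation of $\k$ ($\Sym^2_0\R^n$ for $\so(n)$ up to a twist, and $\Lambda^2_0\C^{2n}$-type for $\sp(n)$), whose highest weight is $2\omega_1$ (resp.\ $\omega_2$ or the appropriate fundamental weight), and Freudenthal's formula then gives $\Cas^\k_\m = n$ (resp.\ $2n$). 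The cleanest route is probably to identify $\m$ as a known irreducible $\k$-module and read off the highest weight, then plug into \eqref{eq:freudenthal} with the inner product on $\t^\ast$ normalized so that $\Cas^\k_\k = c_\k$.

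The main obstacle is bookkeeping the normalization of the inner product: all these Casimir eigenvalues are sensitive to which invariant form one uses on $\k$, and here the form is inherited from $-\tr$ on $\su(m)\supset\k$, which is \emph{not} the Killing form of $\k$ and differs between the $\R^n\hookrightarrow\C^n$ and $\H^n\hookrightarrow\C^{2n}$ inclusions. I would pin this down once and for all using \eqref{eq:killingforms} together with \eqref{eq:killingformsu}, establishing that on $\k$ the ambient inner product equals $\frac{c_\k}{?}(-B_\k)$ with the correct constant, and then every subsequent eigenvalue follows mechanically from Freudenthal or from a single trace computation. Once the normalization is fixed, each of the six numbers is a short calculation; the only subtlety worth spelling out in the write-up is the precise identification of $\m$ as a $\k$-representation and its highest weight.
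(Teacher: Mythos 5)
Your proposal follows essentially the same route as the paper: $\Cas^\k_\k$ is read off from the ratio of the Killing form to the trace form, and the remaining eigenvalues come from Freudenthal's formula after identifying $\R^n=V_{\omega_1}$, $\m\cong\Sym^2_0\R^n=V_{2\omega_1}$ (resp.\ $\C^{2n}=V_{\omega_1}$, $\m\cong\Lambda^2_0\C^{2n}=V_{\omega_2}$), the paper even noting afterwards that the defining-representation eigenvalues also follow from the trace computation you suggest. One small slip: the rescaling should read $\Cas^\k_V=c_\k\cdot(\text{Casimir eigenvalue w.r.t.\ }-B_\k)$, not $c_\k^{-1}$ times it, which is what makes it consistent with the value $\Cas^\k_\k=c_\k$ that you then correctly state.
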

\begin{proof}
 For any simple Lie algebra $\h$, the Casimir eigenvalue of the adjoint representation is $1$ when the inner product on $\h$ is given by $-B_\h$. Thus, with trace form inner product, we find
 \[\Cas^{\so(n)}_{\so(n)}=n-2,\qquad\Cas^{\sp(n)}_{\sp(n)}=2(n+1).\]
 The Casimir eigenvalues of the other representations can be calculated using Freudenthal's formula \eqref{eq:freudenthal}. Let $(\omega_1,\omega_2,\ldots)$ denote a basis of fundamental weights of $\k$, with ordering given by Bourbaki's convention. Then
 \begin{align*}
  \R^n&=V_{\omega_1},&\m&\cong\Sym^2_0\R^n=V_{2\omega_1}&\text{for }\k=\so(n),\\
  \C^{2n}&=V_{\omega_1},&\m&\cong\Lambda^2_0\C^{2n}=V_{\omega_2}&\text{for }\k=\sp(n).
 \end{align*}
 Proceeding as in \cite[\S3]{SW22}, we obtain the desired eigenvalues.
\end{proof}

Combined with Lemma~\ref{cassun} we obtain another important constant:

\begin{kor}
\label{casm}
The \emph{partial} Casimir constant $\Cas^\m_{\C^m}:=\Cas^\g_{\C^m}-\Cas^\k_{\C^m}$ is given by
 \begin{align*}
  \Cas^\m_{\C^m}&=\frac{(n+2)(n-1)}{2n}&\text{for}\quad\k&=\so(n),\\
  \Cas^\m_{\C^m}&=\frac{(2 n + 1) (n - 1)}{2 n}&\text{for}\quad\k&=\sp(n).
 \end{align*}
\end{kor}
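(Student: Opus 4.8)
The plan is to read off $\Cas^\m_{\C^m}$ directly from its definition as the difference $\Cas^\g_{\C^m}-\Cas^\k_{\C^m}$, feeding in the values already established in Lemmas~\ref{cassun} and~\ref{cask} and simplifying. Both Casimir operators are formed from orthonormal bases with respect to the trace form (restricted to $\k$ in the second case), which is the inner product fixed throughout Section~\ref{sec:Section5}, so the subtraction is meaningful and no further compatibility needs to be checked beyond recalling these conventions.

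The only point deserving comment is that Lemma~\ref{cask} records $\Cas^\k$ on the \emph{real} standard representation $\R^n$ when $\k=\so(n)$, whereas here we need its value on $\C^m$. When $\k=\sp(n)$ we have $m=2n$ and $\C^m=\C^{2n}$ is literally the defining representation, so $\Cas^\k_{\C^m}=\frac{2n+1}{2}$ with nothing to do. When $\k=\so(n)$ we have $m=n$ and $\C^m=\C^n\cong\R^n\otimes_\R\C$; since the quadratic Casimir operator is built from the same basis elements acting by the same formulas, its eigenvalue is unchanged under complexification, so $\Cas^\k_{\C^m}=\Cas^\k_{\R^n}=\frac{n-1}{2}$. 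After this observation, substituting $\Cas^\g_{\C^m}=\frac{m^2-1}{m}$ from Lemma~\ref{cassun} gives, in the orthogonal case,
\[\Cas^\m_{\C^m}=\frac{n^2-1}{n}-\frac{n-1}{2}=(n-1)\left(\frac{n+1}{n}-\frac12\right)=\frac{(n-1)(n+2)}{2n},\]
and in the symplectic case,
\[\Cas^\m_{\C^m}=\frac{(2n)^2-1}{2n}-\frac{2n+1}{2}=(2n+1)\left(\frac{2n-1}{2n}-\frac12\right)=\frac{(2n+1)(n-1)}{2n},\]
which are exactly the claimed formulas.

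I do not expect any genuine obstacle here: the statement is a corollary in the literal sense, and the sole subtlety — passing from the real to the complexified standard representation in the $\so(n)$ case — is the standard fact that Casimir eigenvalues are invariant under complexification, so the whole argument reduces to the elementary rational simplifications displayed above.
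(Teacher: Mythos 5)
Your proposal is correct and follows exactly the route the paper intends: the corollary is obtained by subtracting the value of $\Cas^\k_{\C^m}$ from Lemma~\ref{cask} from the value of $\Cas^\g_{\C^m}$ in Lemma~\ref{cassun}, and your arithmetic checks out in both cases. Your remark that the $\so(n)$-Casimir eigenvalue is unchanged when passing from $\R^n$ to its complexification $\C^m=(\R^n)^\C$ is precisely the implicit convention the paper uses (it later identifies $\C^m$ with the $\so(n)$-module $(\R^n)^\C$), so nothing is missing.
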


Note also that $\Cas^\k_{\C^m}=\frac{\dim\k}{m}$ and $\Cas^\m_{\C^m}=\frac{\dim\m}{m}$ which one can show directly using the trace form on $\C^m$, without having to appeal to Freudenthal's formula, as in the proof of Lemma~\ref{cassun}.

In light of the inclusion $\g\subset\End\C^m$, with $\C^m$ being either the $\so(n)$-module $(\R^n)^\C$ or the $\sp(n)$-module $\C^{2n}$, we can talk about the sandwich constants of $\k$ on $\g$. For convenience, we define the \emph{partial sandwich operators} of the complement $\m$ as
\[\SW^\m_{\End V}:=\SW^\g_{\End V}-\SW^\k_{\End V}\]
for any representation $V$ of $\g$, and denote its eigenvalue on a $\k$-irreducible submodule $W\subset\End V$ by $\SW^\m_W$. Inside the ring of matrices $\End\C^m$ we have
\begin{equation}
\Cas^\m_{\C^m}=-\sum_iE_i^2,\qquad \SW^\m_{\End\C^m}(A)=\sum_iE_iAE_i
\label{partialbase}
\end{equation}
for $A\in\End\C^m$ and an orthonormal basis $(E_i)$ of $\m$.

A straightforward computation using Lemma~\ref{cask} and Corollary~\ref{sandwichonirr} yields:

\begin{lem}
\label{sandwichconst}
 The (partial) sandwich constants of $\k$ and its complement $\m$ on the representation $\g=\k\oplus\m\subset\End\C^m$ are as follows.
 \begin{enumerate}[\upshape(i)]
 \item For $\k=\so(n)$:
 \[\SW^\k_\k=-\frac12,\qquad \SW^\k_\m=\frac12,\qquad \SW^\m_\k=\frac{n+2}{2n},\qquad \SW^\m_\m=\frac{2-n}{2n}.\]
 \item For $\k=\sp(n)$:
 \[\SW^\k_\k=\frac12,\qquad \SW^\k_\m=-\frac12,\qquad \SW^\m_\k=\frac{1-n}{2n},\qquad \SW^\m_\m=\frac{n+1}{2n}.\]
\end{enumerate}
\end{lem}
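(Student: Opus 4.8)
The plan is to compute each of the eight sandwich constants by combining Corollary~\ref{sandwichonirr} (in its partial form) with the Casimir data already assembled in Lemma~\ref{cask} and Corollary~\ref{casm}. The key identity is the partial version of Corollary~\ref{sandwichonirr}: for any $\k$-irreducible submodule $W\subset\End\C^m$ on which the relevant Casimir operators act by scalars, subtracting the $\g$- and $\k$-versions of the sandwich–Casimir relation gives
\[\SW^\m_W=\tfrac12\Cas^\m_{\End\C^m,W}-\Cas^\m_{\C^m},\]
where $\Cas^\m_{\End\C^m}:=\Cas^\g_{\End\C^m}-\Cas^\k_{\End\C^m}$ and $\Cas^\m_{\C^m}$ is the partial Casimir of Corollary~\ref{casm}. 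Likewise $\SW^\k_W=\tfrac12\Cas^\k_{\End\C^m,W}-\Cas^\k_{\C^m}$, using $\Cas^\k_{\C^m}=\frac{\dim\k}{m}$. Since $\k$ and $\m$ are each $\k$-irreducible (this is why $\g=\k\oplus\m$ is the isotropy decomposition of an irreducible symmetric space), we may take $W=\k$ and $W=\m$ in turn.

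First I would record the four Casimir eigenvalues needed on $\End\C^m\supset\g=\k\oplus\m$. On the submodule $\k\subset\End\C^m$, the $\g$-action is the adjoint action, so $\Cas^\g_{\End\C^m,\k}=\Cas^\g_\g=2m$ by Lemma~\ref{cassun}; the $\k$-action is also adjoint, so $\Cas^\k_{\End\C^m,\k}=\Cas^\k_\k$, which is $n-2$ resp.\ $2(n+1)$ by Lemma~\ref{cask}. On the submodule $\m\subset\End\C^m$: as a $\g$-module $\m$ sits inside $\End\C^m$ via the adjoint action of $\su(m)$ on itself, so $\Cas^\g_{\End\C^m,\m}=\Cas^\g_\g=2m$ again; as a $\k$-module $\m$ carries $\Cas^\k_\m$, equal to $n$ resp.\ $2n$ by Lemma~\ref{cask}. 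Plugging these, together with $\Cas^\k_{\C^m}=\frac{\dim\k}{m}$ and the values of $\Cas^\m_{\C^m}$ from Corollary~\ref{casm}, into the two boxed formulas above yields all eight numbers after elementary algebra; one can also cross-check $\SW^\m_W=\SW^\g_W-\SW^\k_W$ against the known $\SW^\g_\g=\frac1m$ from Lemma~\ref{swsun} as a sanity check on the split $2m$ into $\k$- and $\m$-parts.

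The only genuinely delicate point — and the step I would dwell on — is justifying that $\k$ and $\m$, viewed as subspaces of $\End\C^m$ rather than as abstract $\g$- or $\k$-modules, are exactly the submodules on which these Casimir operators act as the scalars quoted, and in particular that the identifications of $\Cas^\g_{\End\C^m,\m}$ with $\Cas^\g_\g$ and of $\Cas^\k_{\End\C^m,\m}$ with $\Cas^\k_\m$ are legitimate. This is where one uses that the inner product on $\g$ and all its subspaces is (minus) the trace form of $\C^m$, so that the inclusion $\g\hookrightarrow\End\C^m$ is an isometric embedding of $\g$-modules and the restriction of the commutator action of $\g$ on $\End\C^m$ to the subspace $\g$ is precisely the adjoint representation; the further restriction to $\m$ recovers the isotropy representation with its standard normalization. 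Once this identification is in place the computation is purely numerical, and the stated table follows; the phrase ``a straightforward computation using Lemma~\ref{cask} and Corollary~\ref{sandwichonirr}'' in the statement is exactly this routine substitution.
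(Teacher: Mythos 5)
Your proposal is correct and is essentially the computation the paper intends: it applies Corollary~\ref{sandwichonirr} (and its partial version, equivalent to $\SW^\m=\SW^\g-\SW^\k$ with $\SW^\g_\g=\frac1m$ from Lemma~\ref{swsun}) together with the Casimir constants of Lemmas~\ref{cassun}, \ref{cask} and Corollary~\ref{casm}, and all eight values check out. The only cosmetic slip is calling $\k$ and $\m$ ``$\g$-submodules'' of $\End\C^m$ --- they are only $\k$-submodules --- but since $\Cas^\g_{\End\C^m}$ acts as the scalar $2m$ on the whole $\g$-irreducible subspace $\g$, its restriction to $\k$ and $\m$ is still that scalar and the computation is unaffected.
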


It is an elementary fact that the anticommutator of (skew-)Hermitian matrices is Hermitian. Therefore with $\m=\i\Herm_0(n,\AA)$ and $\k=\su(n,\AA)$ we have
\begin{align}
 \{\m,\m\}&\subset\i\m\oplus\R\Id,\label{mmanticomm}\\
 \{\k,\k\}&\subset\i\m\oplus\R\Id,\label{kkanticomm}
\intertext{and similarly one can check that}
 \{\m,\k\}&\subset\i\k.\label{mkanticomm}
\end{align}
We shall soon calculate contractions involving trace form and anticommutator. For this, we note the formula for the orthogonal projection of an anticommutator to its trace-free part:
\begin{equation}
 \{A,B\}_0=\{A,B\}-\frac{2}{m}\tr(AB)\Id,\qquad A,B\in\End\C^m.
 \label{tracefree}
\end{equation}

\subsection{The norms of $P_{\m\m\m}$ and $P_{\k\k\m}$}

We can now express the norms of the polynomials $P_{\m\m\m},P_{\k\k\m}\in(\Sym^3\g^\ast)^K$, defined in Lemma~\ref{Sym3gK}, in terms of Casimir and sandwich constants. To that end, we note the expansion
\[P(Z)=2\i\tr(Z^3)=2\i\tr(Z_\m^3+3Z_\m^2Z_\k^2+3Z_\m Z_\k^2+Z_\k^3),\]
for any $Z\in\g$, so with Lemma~\ref{Sym3gK} it follows that
\begin{equation}
P_{\m\m\m}(Z)=2\i\tr(Z_\m^3),\qquad P_{\k\k\m}(Z)=6\i\tr(Z_\m Z_\k^2),
\label{eq:P12explicit}
\end{equation}
or in polarized form
\begin{align}
P_{\m\m\m}(X,Y,Z)&=\i\tr(\{X_\m,Y_\m\}Z_\m),\label{eq:P1pol}\\ P_{\k\k\m}(X,Y,Z)&=\i\tr(\{X_\k,Y_\k\}Z_\m)+\i\tr(\{Z_\k,X_\k\}Y_\m)+\i\tr(\{Y_\k,Z_\k\}X_\m).
\label{eq:P2pol}
\end{align}

\begin{lem}
\label{p1norm}
 $|P_{\m\m\m}|^2=2m\Cas^\m_{\C^m}(-\SW^\m_\m+\Cas^\m_{\C^m}-\frac{2}{m})$.
\end{lem}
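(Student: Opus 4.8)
The plan is to compute $|P_{\m\m\m}|^2$ directly from the polarized expression \eqref{eq:P1pol} by the same bookkeeping that was used in Proposition~\ref{p0norm}, but now contracting only over an orthonormal basis $(E_i)$ of $\m$ rather than all of $\g$. Concretely, I would start from
\[|P_{\m\m\m}|^2=-\sum_{i,j,k}\tr(\{E_i,E_j\}E_k)^2,\]
and observe that, since $\{\m,\m\}\subset\i\m\oplus\R\Id$ by \eqref{mmanticomm}, the only part of $\{E_i,E_j\}$ that pairs nontrivially against $E_k\in\m$ under the trace form is the trace-free part $\{E_i,E_j\}_0$. Using \eqref{tracefree} to write $\{E_i,E_j\}_0=\{E_i,E_j\}-\tfrac{2}{m}\tr(E_iE_j)\Id$ and the fact that $(E_i)$ is orthonormal (so $\tr(E_iE_j)=-\delta_{ij}$ and $\sum_i\tr(E_i^2)=-\dim\m$), the sum collapses — exactly as in Proposition~\ref{p0norm} — to a combination of $\sum_i\tr(E_i\cdot\text{(sandwich-type term)})$ and $\sum_i\tr(E_i\cdot\text{(Casimir-type term)})$.

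The key algebraic step is recognizing that $\sum_{i,j}\tr(E_iE_jE_iE_j)=\sum_i\tr\!\big(E_i\,\SW^\m_\m(E_i)\big)$ and $\sum_{i,j}\tr(E_iE_j^2E_i)=\sum_i\tr\!\big(E_i\,\Cas^\m_{\C^m}E_i\big)$, which is precisely where the partial sandwich operator $\SW^\m_{\End\C^m}$ and the partial Casimir $\Cas^\m_{\C^m}$ from \eqref{partialbase} enter. Since $\m$ is $\k$-irreducible, $\SW^\m_{\End\C^m}$ restricted to $\m$ is multiplication by the scalar $\SW^\m_\m$, and likewise $\Cas^\m_{\C^m}$ is the scalar from Corollary~\ref{casm}; both can then be pulled out of the trace. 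After substituting $\sum_i\tr(E_i^2)=-\dim\m=-m\,\Cas^\m_{\C^m}$ (using the identity $\Cas^\m_{\C^m}=\dim\m/m$ noted just after Corollary~\ref{casm}), everything assembles into $2m\,\Cas^\m_{\C^m}\big(-\SW^\m_\m+\Cas^\m_{\C^m}-\tfrac{2}{m}\big)$, which is the claimed formula.

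I expect the main obstacle to be purely combinatorial rather than conceptual: correctly tracking the factor of $2$ coming from symmetrizing the two slots of the anticommutator, the sign conventions inherent in working with skew-Hermitian generators (so that $\tr(E_i^2)<0$), and making sure the cross term $-\tfrac{2}{m}\tr(E_iE_j)\Id$ is inserted and cancelled in the right place — this is exactly the bookkeeping that, if done carelessly, shifts the final constant. The underlying structural input — that $\{\m,\m\}$ lands in $\i\m\oplus\R\Id$, that $\m$ is $\k$-irreducible so Schur's lemma applies, and that the relevant scalars are already tabulated in Lemma~\ref{sandwichconst} and Corollary~\ref{casm} — is all in place, so once the contraction is organized as in Proposition~\ref{p0norm} the identity follows mechanically. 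No separate argument is needed for the two cases $\k=\so(n)$ and $\k=\sp(n)$, since the derivation only uses \eqref{mmanticomm} and irreducibility of $\m$, which hold uniformly.
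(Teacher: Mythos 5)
Your proposal follows the paper's proof essentially verbatim: contract over $(E_k)$ using \eqref{mmanticomm} and \eqref{tracefree}, expand the anticommutators, and identify the resulting sums with the partial sandwich and Casimir constants via \eqref{partialbase}, finishing with $-\sum_i\tr(E_i^2)=\dim\m=m\Cas^\m_{\C^m}$. The only caveat is a sign in your intermediate identity --- with the convention $\Cas^\m_{\C^m}=-\sum_jE_j^2$ one has $\sum_{i,j}\tr(E_iE_j^2E_i)=-\sum_i\tr\bigl(E_i\,\Cas^\m_{\C^m}E_i\bigr)$ --- but since your final expression carries the correct signs this is only the bookkeeping slip you yourself flagged, not a gap.
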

\begin{proof}
 With definition \eqref{eq:inprod} of the norm and \eqref{eq:P1pol}, we have
 \begin{align*}
  |P_{\m\m\m}|^2&=\sum_{i,j,k}P_{\m\m\m}(E_i,E_j,E_k)^2=-\sum_{i,j,k}\tr(\{E_i,E_j\}E_k)^2.
 \end{align*}
 By \eqref{mmanticomm}, $\{E_i,E_j\}\in\i\m\oplus\R\Id$. Thus, when we contract over the orthonormal basis $(E_k)$ and apply \eqref{tracefree}, we obtain
 \begin{align*}
  |P_{\m\m\m}|^2&=\sum_{i,j}\tr(\{E_i,E_j\}\{E_i,E_j\}_0)=\sum_{i,j}\tr\left(\{E_i,E_j\}^2-\tfrac{2}{m}\tr(E_iE_j)\{E_i,E_j\}\right).
 \end{align*}
 Multiplying out the anticommutators, using the cyclicity of the trace, and contracting over $(E_j)$ in the second term, this reduces to
 \begin{align*}
  |P_{\m\m\m}|^2&=\sum_{i,j}\tr(2E_iE_jE_iE_j+2E_i^2E_j^2)+\frac{2}{m}\sum_i\tr(2E_i^2).
 \end{align*}
 Using the partial sandwich and Casimir constants \eqref{partialbase}, this gives
 \begin{align*}
  |P_{\m\m\m}|^2&=2\left(\SW^\m_\m-\Cas^\m_{\C^m}+\tfrac{2}{m}\right)\sum_i\tr(E_i^2),
 \end{align*}
 and with $-\sum_i\tr(E_i^2)=\dim\m=m\Cas^\m_{\C^m}$ the assertion follows.
\end{proof}

\begin{lem}
\label{p2norm}
 $|P_{\k\k\m}|^2=6m\Cas^\k_{\C^m}(-\SW^\k_\k+\Cas^\k_{\C^m}-\frac{2}{m})$.
\end{lem}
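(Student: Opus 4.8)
The plan is to mimic the proof of Lemma~\ref{p1norm} almost verbatim, replacing the factor-$2$ combinatorics of a symmetric cubic in one argument by the factor-$6$ combinatorics coming from the mixed form $P_{\k\k\m}$, and replacing $\m$-data by $\k$-data in all the traces. First I would expand $|P_{\k\k\m}|^2$ using the definition \eqref{eq:inprod} of the inner product together with the polarized expression \eqref{eq:P2pol}. Because $P_{\k\k\m}$ is a sum of three terms which are cyclic permutations of one another, the square contracted over an orthonormal basis of $\g$ will produce nine terms, but by the full $\SU(m)$-invariance (equivalently, by the orthogonality $\m\perp\k$ and the symmetry of the form) these collapse: three of them are ``diagonal'' and equal to each other, and the remaining six are ``cross'' terms which will also group into a single multiple. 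I expect the net effect is that $|P_{\k\k\m}|^2 = 6\sum_{a,b}\mathrm{tr}(\{F_a,F_b\}F_a\text{-}F_b\text{-type terms})$ where $(F_a)$ runs over an orthonormal basis of $\k$, matching the structure of the computation in Lemma~\ref{p1norm} but with an overall $6$ rather than $2$.

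Next I would use the fact, recorded in \eqref{kkanticomm}, that $\{\k,\k\}\subset\i\m\oplus\R\Id$, which lets me replace the inner anticommutator by its trace-free part via \eqref{tracefree} when it is contracted against an $\m$-basis vector $E_k$; here I must be slightly careful, because in $P_{\k\k\m}$ the anticommutator $\{X_\k,Y_\k\}$ is paired with $Z_\m\in\m$, so the relevant orthogonal projection is onto $\i\m$, and the formula \eqref{tracefree} supplies exactly the $-\tfrac{2}{m}\mathrm{tr}(\cdot)\Id$ correction. After substituting, multiplying out the anticommutators, and using cyclicity of the trace, the sum should reduce — exactly as in Lemma~\ref{p1norm} — to terms of the shape $\sum_{a,b}\mathrm{tr}(F_aF_bF_aF_b)$ and $\sum_{a,b}\mathrm{tr}(F_a^2F_b^2)$ together with a correction term $\tfrac{1}{m}\sum_a\mathrm{tr}(F_a^2)$. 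These are then rewritten via $\sum_a F_a A F_a = \SW^\k_{\End\C^m}(A)$ and $-\sum_a F_a^2 = \Cas^\k_{\C^m}\Id$ (the analogue of \eqref{partialbase} for the subalgebra $\k$ acting on $\C^m$), so that the first term contributes $\SW^\k_\k$, the second contributes $-\Cas^\k_{\C^m}$, and the correction contributes $+\tfrac{2}{m}$, giving the bracket $(-\SW^\k_\k+\Cas^\k_{\C^m}-\tfrac{2}{m})$ up to sign bookkeeping.

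Finally I would use $-\sum_a\mathrm{tr}(F_a^2)=\dim\k=m\,\Cas^\k_{\C^m}$ (which follows by taking a trace on $\C^m$ of $\Cas^\k_{\C^m}\Id$, cf.~the remark after Corollary~\ref{casm} that $\Cas^\k_{\C^m}=\dim\k/m$) to pull out the overall factor $m\,\Cas^\k_{\C^m}$, and combine with the $6$ from the polarization count to land on
\[
|P_{\k\k\m}|^2 = 6m\,\Cas^\k_{\C^m}\!\left(-\SW^\k_\k+\Cas^\k_{\C^m}-\tfrac{2}{m}\right).
\]
The main obstacle I anticipate is purely bookkeeping rather than conceptual: keeping the factor of $6$ honest through the nine-term expansion of the square of the three-term cyclic form, and in particular checking that the ``cross'' terms do indeed recombine into the same scalar combination of $\SW^\k_\k$, $\Cas^\k_{\C^m}$ as the ``diagonal'' terms (so that no separate structure constants of $\k$ survive). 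One clean way to organize this is to observe that $P_{\k\k\m}(X,Y,Z)=3\,\i\,\mathrm{tr}(\{X_\k,Y_\k\}Z_\m)$ after symmetrization, so that $|P_{\k\k\m}|^2$ can be computed as $9$ times $\sum_{a,b,k}\mathrm{tr}(\{F_a,F_b\}E_k)\cdot\mathrm{tr}(\{F_a,F_b\}E_k)$ only on the fully symmetric piece — but since $P_{\k\k\m}$ already lives in the one-dimensional space $(\m^\ast\otimes\Sym^2\k^\ast)^K$ by Lemma~\ref{Sym3gK}, this symmetrization is automatic and the factor works out to $6$ after accounting for the norm convention on $\Sym^3$. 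Once that combinatorial factor is pinned down, the rest is the same Casimir/sandwich substitution as in Lemma~\ref{p1norm}.
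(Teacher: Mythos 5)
Your proposal follows the paper's proof essentially step for step: expand the norm via \eqref{eq:inprod} and \eqref{eq:P2pol}, use $\{\k,\k\}\subset\i\m\oplus\R\Id$ and \eqref{tracefree} to eliminate the contraction over the $\m$-basis, reduce the remaining sums over a $\k$-basis to $\SW^\k_\k$ and $\Cas^\k_{\C^m}$, and finish with $-\sum_a\tr(F_a^2)=\dim\k=m\Cas^\k_{\C^m}$. The one place where your bookkeeping is off is the polarization count, which you flag yourself but do not resolve correctly. Of the nine terms in the square of the three-term cyclic form, the six cross terms do not ``group into a single multiple''--- they vanish identically, because a cross term such as $\tr(\{X_\k,Y_\k\}Z_\m)\,\tr(\{Z_\k,X_\k\}Y_\m)$ requires a single basis vector ($Z$, say) to have nonzero projection to both $\k$ and $\m$, which never happens for a basis adapted to the orthogonal splitting $\g=\k\oplus\m$. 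Only the three diagonal terms survive, each contributing $-\sum_{a,b,k}\tr(\{F_a,F_b\}E_k)^2$, so the correct intermediate identity is $|P_{\k\k\m}|^2=-3\sum_{a,b,k}\tr(\{F_a,F_b\}E_k)^2$ (factor $3$, not $6$ or $9$). The remaining factor of $2$ in the final coefficient $6=3\cdot2$ arises exactly as in Lemma~\ref{p1norm}, from expanding $\tr(\{F_a,F_b\}^2)=\tr(2F_aF_bF_aF_b+2F_a^2F_b^2)$. With that correction your argument closes and reproduces the paper's proof.
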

\begin{proof}
 Let $(Y_i)$ denote an orthonormal basis of $\k$. Using definition \eqref{eq:inprod} and  \eqref{eq:P2pol}, we may write
 \begin{align*}
  |P_{\k\k\m}|^2&=3\sum_{i,j,k}P_{\k\k\m}(Y_i,Y_j,E_k)^2=-3\sum_{i,j,k}\tr(\{Y_i,Y_j\}E_k)^2.
 \end{align*}
 Since $\{Y_i,Y_j\}\in\i\m\oplus\R\Id$ be \eqref{kkanticomm}, we can use \eqref{tracefree} to obtain
 \begin{align*}
  |P_{\k\k\m}|^2&=3\sum_{i,j}\tr(\{Y_i,Y_j\}\{Y_i,Y_j\}_0),
 \end{align*}
 and analogous to the proof of Lemma~\ref{p1norm}, this reduces to
 \begin{align*}
  |P_{\m\m\m}|^2&=3\sum_{i,j}\tr(2Y_iY_jY_iY_j+2Y_i^2Y_j^2)+\frac{6}{m}\sum_i\tr(2Y_i^2)\\
  &=6\left(\SW^\k_\k-\Cas^\k_{\C^m}+\tfrac{2}{m}\right)\sum_i\tr(Y_i^2),
 \end{align*}
 and with $-\sum_i\tr(Y_i^2)=\dim\k=m\Cas^\k_{\C^m}$ the proof is finished.
\end{proof}

Finally, we may plug in the Casimir and sandwich constants from Lemmas \ref{cask} and \ref{sandwichconst} and Corollary~\ref{casm} to obtain explicit expressions for the norms.

\begin{kor}
 \label{pnorms}
 The norms $|P_{\m\m\m}|^2$ and $|P_{\k\k\m}|^2$ are given as follows.
 \begin{enumerate}[\upshape(i)]
  \item For $\k=\so(n)$:
  \[|P_{\m\m\m}|^2=\frac{(n+4)(n+2)(n-1)(n-2)}{2n},\qquad|P_{\k\k\m}|^2=\frac{3}{2}(n+2)(n-1)(n-2).\]
  \item For $\k=\sp(n)$:
  \[|P_{\m\m\m}|^2=\frac{2(2n+1)(n+1)(n-1)(n-2)}{n},\qquad|P_{\k\k\m}|^2=6(2n+1)(n+1)(n-1).\]
 \end{enumerate}
\end{kor}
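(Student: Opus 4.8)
The plan is to treat this corollary as a pure substitution into the two closed formulas of Lemmas~\ref{p1norm} and~\ref{p2norm}, followed by elementary simplification and factorization; no new idea is needed beyond the constants already assembled in this section.

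First I would collect the numerical inputs for each of the two cases. For $\k=\so(n)$ (so $m=n$, and $\C^m=(\R^n)^\C$ as a $\k$-module), Corollary~\ref{casm}, Lemma~\ref{sandwichconst}(i), Lemma~\ref{cask}(i), together with the observation $\Cas^\k_{\C^m}=\tfrac{\dim\k}{m}$, give
\[
\Cas^\m_{\C^m}=\frac{(n+2)(n-1)}{2n},\qquad \SW^\m_\m=\frac{2-n}{2n},\qquad \Cas^\k_{\C^m}=\frac{n-1}{2},\qquad \SW^\k_\k=-\frac12.
\]
For $\k=\sp(n)$ (so $m=2n$, and $\C^m=\C^{2n}$ as a $\k$-module) the analogous inputs, from Corollary~\ref{casm}, Lemma~\ref{sandwichconst}(ii), and Lemma~\ref{cask}(ii), are
\[
\Cas^\m_{\C^m}=\frac{(2n+1)(n-1)}{2n},\qquad \SW^\m_\m=\frac{n+1}{2n},\qquad \Cas^\k_{\C^m}=\frac{2n+1}{2},\qquad \SW^\k_\k=\frac12.
\]

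Next I would substitute these into $|P_{\m\m\m}|^2=2m\,\Cas^\m_{\C^m}\bigl(-\SW^\m_\m+\Cas^\m_{\C^m}-\tfrac2m\bigr)$ and into $|P_{\k\k\m}|^2=6m\,\Cas^\k_{\C^m}\bigl(-\SW^\k_\k+\Cas^\k_{\C^m}-\tfrac2m\bigr)$. In each case the bracket collapses to a single fraction: for $|P_{\m\m\m}|^2$ the numerator is the quadratic $n^2+2n-8=(n+4)(n-2)$ in the orthogonal case and $2n^2-2n-4=2(n+1)(n-2)$ in the symplectic case, while for $|P_{\k\k\m}|^2$ the bracket reduces to $\tfrac{n^2-4}{2n}$ resp.\ $\tfrac{n^2-1}{n}$. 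Multiplying through by the prefactors and cancelling then yields exactly the four stated expressions.

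I do not expect any genuine obstacle here, since everything rests on results proved earlier in the excerpt. The only points requiring care are bookkeeping ones: using the \emph{partial} constants $\SW^\m_\m$ and $\Cas^\m_{\C^m}$ (rather than, say, $\SW^\k_\m$) exactly where the two lemmas call for them, correctly identifying $\C^m$ as the appropriate $\k$-module so that the right value of $\Cas^\k_{\C^m}$ is inserted in each case, and double-checking the factorizations of the quadratics, since that is the one spot where an arithmetic slip would not be self-correcting.
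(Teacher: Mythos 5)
Your proposal is correct and is exactly the paper's route: the corollary is obtained by substituting the constants from Lemmas~\ref{cask} and \ref{sandwichconst} and Corollary~\ref{casm} into the closed formulas of Lemmas~\ref{p1norm} and \ref{p2norm}. All four bracket evaluations and factorizations you give check out, so nothing further is needed.
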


As a sanity check, one may verify that indeed $|P_{\m\m\m}|^2+|P_{\k\k\m}|^2=|P|^2$, with the latter given in Lemma~\ref{p0norm}.

\subsection{Constants of proportionality}

It remains to work out the constants $\kappa,\lambda$ from Corollary~\ref{constants}. First, we formulate two auxiliary results:

\begin{lem}
\label{aux}
 For any $Z\in\g$ we have
 \begin{align}
  \sum_i\tr(\{Z_\m,\{Z_\m,\{Z_\m,E_i\}\}\}E_i)&=2(3\SW^\m_\m-\Cas^\m_{\C^m})\tr(Z_\m^3),\label{aux1}\\
  \sum_i\tr(\{Z_\m,E_i\}[Z_\k,[Z_\k,E_i]])&=-2(\Cas^\m_{\C^m}+2\SW^\m_\k-\SW^\m_\m)\tr(Z_\m Z_\k^2).\label{aux2}
 \end{align}
\end{lem}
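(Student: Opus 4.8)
The plan is to reduce both identities to elementary matrix algebra inside $\End\C^m$, using only cyclicity of the trace together with the structural facts already in hand: by \eqref{partialbase} we have $\sum_iE_i^2=-\Cas^\m_{\C^m}\Id$ as an endomorphism of $\C^m$, and since the partial sandwich operator $\SW^\m_{\End\C^m}$ is $\k$-equivariant it acts as the scalar $\SW^\m_\m$ on the irreducible $\k$-submodule $\m\subset\End\C^m$ and as $\SW^\m_\k$ on $\k\subset\End\C^m$ (Lemma~\ref{sandwichconst}); in particular $\sum_iE_iZ_\m E_i=\SW^\m_\m Z_\m$ and $\sum_iE_iZ_\k E_i=\SW^\m_\k Z_\k$, because $Z_\m\in\m$ and $Z_\k\in\k$. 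In each case the recipe is: expand the nested (anti)commutators, multiply out into a sum of words containing exactly two copies of $E_i$, take traces, sum over $i$, and then cyclically rotate each word so that the two $E_i$'s become adjacent (producing the block $\sum_iE_i^2$) or are separated by a single letter $Z_\m$ or $Z_\k$ (producing a sandwich $\sum_iE_i(\,\cdot\,)E_i$).

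For \eqref{aux1}, an easy induction on the nesting depth (the binomial formula, in effect) gives
\[\{Z_\m,\{Z_\m,\{Z_\m,E_i\}\}\}=Z_\m^3E_i+3Z_\m^2E_iZ_\m+3Z_\m E_iZ_\m^2+E_iZ_\m^3.\]
Multiplying on the right by $E_i$, tracing and summing over $i$, cyclicity merges the first and last terms into $2\sum_i\tr(Z_\m^3E_i^2)=-2\Cas^\m_{\C^m}\tr(Z_\m^3)$ and the two middle terms into $6\sum_i\tr(Z_\m^2E_iZ_\m E_i)=6\tr(Z_\m^2\SW^\m_{\End\C^m}(Z_\m))=6\SW^\m_\m\tr(Z_\m^3)$. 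The sum is $2(3\SW^\m_\m-\Cas^\m_{\C^m})\tr(Z_\m^3)$, as claimed.

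For \eqref{aux2}, I would expand $[Z_\k,[Z_\k,E_i]]=Z_\k^2E_i-2Z_\k E_iZ_\k+E_iZ_\k^2$ and $\{Z_\m,E_i\}=Z_\m E_i+E_iZ_\m$, multiply out the six resulting words, trace and sum over $i$. After cyclic rotation each word yields a scalar multiple of $\tr(Z_\m Z_\k^2)$: those with adjacent $E_i$'s contribute the factor $-\Cas^\m_{\C^m}$ (via $\sum_iE_i^2$), those with a single $Z_\m$ between the $E_i$'s contribute $\SW^\m_\m$, and those with a single $Z_\k$ contribute $\SW^\m_\k$. Tracking the coefficients, the six terms contribute $\SW^\m_\m$, $-2\SW^\m_\k$, $-\Cas^\m_{\C^m}$, $-\Cas^\m_{\C^m}$, $-2\SW^\m_\k$, $\SW^\m_\m$ (times $\tr(Z_\m Z_\k^2)$), summing to $-2(\Cas^\m_{\C^m}+2\SW^\m_\k-\SW^\m_\m)\tr(Z_\m Z_\k^2)$.

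The whole argument is mechanical, so I do not expect a genuine obstacle; the only point requiring care is the bookkeeping after the cyclic rotations, namely applying $\SW^\m_\m$ versus $\SW^\m_\k$ according to whether the letter trapped between the two $E_i$'s lies in $\m$ or in $\k$. A convenient feature is that cyclicity never forces one to decompose auxiliary products such as $Z_\k^2$ or $Z_\m Z_\k$ into $\k$-irreducible pieces — those always sit on the outside of the sandwich and are simply carried along under the trace. This lemma then provides exactly the computational input needed to pin down the proportionality constants $\kappa$ and $\lambda$ from Corollary~\ref{constants} in the following subsection.
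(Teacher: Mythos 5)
Your proposal is correct and follows essentially the same route as the paper: expand the nested (anti)commutators, use cyclicity of the trace to bring the two copies of $E_i$ either adjacent (giving $-\Cas^\m_{\C^m}$ via $\sum_iE_i^2$) or around a single letter $Z_\m$ or $Z_\k$ (giving $\SW^\m_\m$ or $\SW^\m_\k$), exactly as in \eqref{partialbase}. Your term-by-term coefficients for \eqref{aux2} agree with the paper's intermediate expression $\sum_i\tr(\{Z_\m,Z_\k^2\}E_i^2-2\{Z_\m,Z_\k\}E_iZ_\k E_i+2Z_\k^2E_iZ_\m E_i)$, and your observation that the sandwiched letter always lies in an irreducible $\k$-module is precisely what makes the bookkeeping work.
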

\begin{proof}
 Multiplying out the anticommutators and using cyclicity of the trace, we can rewrite the above as
 \begin{align*}
  \sum_i\tr(\{Z_\m,\{Z_\m,\{Z_\m,E_i\}\}\}E_i)&=\sum_i\tr(2Z_\m^3E_i^2+6Z_\m^2E_iZ_\m E_i),\\
  \sum_i\tr(\{Z_\m,E_i\}[Z_\k,[Z_\k,E_i]])&=\sum_i\tr(\{Z_\m,Z_\k^2\}E_i^2-2\{Z_\m,Z_\k\}E_iZ_\k E_i+2Z_\k^2E_iZ_\m E_i).
 \end{align*}
 Employing the partial sandwich and Casimir constants \eqref{partialbase}, we arrive at the desired identities.
\end{proof}

Now we are ready calculate $\kappa$.

\begin{lem}
\label{kappa}
 \begin{align*}
  \kappa&=\Cas^\m_{\C^m}-3\SW^\m_\m-\frac{8}{m}=
  \begin{cases}
   (n^2+4n-24)/(2n)&\text{for}\quad\k=\so(n),\\
   (n^2-2n-6)/n&\text{for}\quad\k=\sp(n).
  \end{cases}
 \end{align*}
\end{lem}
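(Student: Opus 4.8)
The plan is to determine $\kappa$ from the relation $Q=\kappa P_{\m\m\m}$ of Corollary~\ref{constants} together with the explicit formula $P_{\m\m\m}(Z)=2\i\tr(Z_\m^3)$ of \eqref{eq:P12explicit}: it suffices to evaluate $Q(Z)$ for an arbitrary $Z\in\g$ and read $\kappa$ off as the coefficient of $2\i\tr(Z_\m^3)$. First I would rewrite $Q$ using $\sigma(X,Y,W)=\i\tr(\{X,Y\}W)$ from \eqref{sigma}. The crucial point is that contracting $\sigma$ in its last slot against the orthonormal basis $(E_j)$ of $\m$ returns an element of $\m$: by \eqref{mmanticomm} the anticommutator $\{Z_\m,E_i\}$ lies in $\i\m\oplus\R\Id$, so \eqref{tracefree} gives $\sum_j\sigma(Z_\m,E_i,E_j)E_j=-\i\{Z_\m,E_i\}_0\in\m$. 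Writing $T$ for the resulting endomorphism $E\mapsto-\i\{Z_\m,E\}_0$ of $\m$ (which is symmetric by the total symmetry of $\sigma$, and is essentially Jordan multiplication by $Z_\m$ under $\m\cong\i J_0$), the three contractions in $Q$ telescope, yielding
\[Q(Z)=\tr_{\m}(T^3)=-\i\sum_i\tr\bigl(E_i\{Z_\m,\{Z_\m,\{Z_\m,E_i\}_0\}_0\}\bigr).\]

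Next I would peel off the nested trace-free projections. Applying \eqref{tracefree} to the innermost and then the middle anticommutator and repeatedly using $\{Z_\m,\Id\}=2Z_\m$, $\{Z_\m,Z_\m\}=2Z_\m^2$, $\tr(Z_\m)=0$ and cyclicity of the trace, one obtains
\[\{Z_\m,\{Z_\m,\{Z_\m,E_i\}_0\}_0\}=\{Z_\m,\{Z_\m,\{Z_\m,E_i\}\}\}-\tfrac{8}{m}\tr(Z_\m E_i)Z_\m^2-\tfrac{8}{m}\tr(Z_\m^2 E_i)Z_\m.\]
The ``pure'' term is evaluated by Lemma~\ref{aux}, equation \eqref{aux1}, as $\sum_i\tr(E_i\{Z_\m,\{Z_\m,\{Z_\m,E_i\}\}\})=2(3\SW^\m_\m-\Cas^\m_{\C^m})\tr(Z_\m^3)$. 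For the two correction terms I would use that $(E_i)$ is orthonormal for $-\tr$, hence $\sum_i\tr(Z_\m E_i)E_i=-Z_\m$, which turns each of $\sum_i\tr(Z_\m E_i)\tr(Z_\m^2 E_i)$ and $\sum_i\tr(Z_\m^2 E_i)\tr(Z_\m E_i)$ into $-\tr(Z_\m^3)$. Collecting everything gives $Q(Z)=-2\i\bigl(3\SW^\m_\m-\Cas^\m_{\C^m}+\tfrac{8}{m}\bigr)\tr(Z_\m^3)$, and comparison with $P_{\m\m\m}(Z)=2\i\tr(Z_\m^3)$ yields $\kappa=\Cas^\m_{\C^m}-3\SW^\m_\m-\tfrac{8}{m}$.

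Finally I would substitute the values of $\Cas^\m_{\C^m}$ from Corollary~\ref{casm} and $\SW^\m_\m$ from Lemma~\ref{sandwichconst}, taking $m=n$ for $\k=\so(n)$ and $m=2n$ for $\k=\sp(n)$, and simplify the resulting rational functions of $n$; this produces $(n^2+4n-24)/(2n)$ and $(n^2-2n-6)/n$ respectively.

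I expect the main obstacle to be the careful bookkeeping of the scalar ($\propto\Id$) corrections generated by the iterated $(\cdot)_0$ projections: it is easy to drop or double-count a factor of $\tfrac{2}{m}$, and pinning down the coefficient $-\tfrac{8}{m}$ exactly is the delicate part of the computation. A secondary point requiring care is the justification that the triple contraction of $\sigma$ collapses to a single trace over $\m$, which relies on \eqref{mmanticomm} to ensure that every intermediate contraction stays inside $\m$.
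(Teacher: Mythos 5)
Your proposal is correct and follows essentially the same route as the paper: both reduce $Q(Z)$ to the contraction $\sum_i\tr(\{Z_\m,\{Z_\m,\{Z_\m,E_i\}\}\}E_i)$ evaluated by \eqref{aux1}, with the trace-free corrections contributing the $-\tfrac{16\i}{m}\tr(Z_\m^3)$ term; the only difference is that you package the three basis contractions at once as $\tr_\m(T^3)$ for the symmetric endomorphism $T=-\i\{Z_\m,\cdot\,\}_0$, whereas the paper contracts the indices $k$ and $j$ one at a time. The bookkeeping of the $\tfrac{2}{m}$ corrections and the final substitution of the constants from Lemma~\ref{sandwichconst} and Corollary~\ref{casm} both check out.
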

\begin{proof}
 Let $Z\in\g$. By definition,
 \begin{align*}
  Q(Z)&=-\i\sum_{i,j,k}\tr(\{Z_\m,E_i\}E_j)\tr(\{Z_\m,E_i\}E_k)\tr(\{Z_\m,E_j\}E_k).
 \end{align*}
 By \eqref{mmanticomm}, $\{Z,E_j\}\in\i\m\oplus\R\Id$. Thus, when we contract over the orthonormal basis $(E_k)$ and apply \eqref{tracefree}, we obtain
 \begin{align*}
  Q(Z)&=\i\sum_{i,j}\tr(\{Z_\m,E_i\}E_j)\tr(\{Z_\m,E_i\}\{Z_\m,E_j\}_0)\\
  &=\i\sum_{i,j}\tr(\{Z_\m,E_i\}E_j)\Big(\tr(\{Z_\m,E_i\}\{Z_\m,E_i\})-\tfrac2m\tr(\{Z_\m,E_i\})\tr(Z_\m E_j)\Big)\\
  &=\i\sum_{i,j}\tr(\{Z_\m,E_i\}E_j)\Big(\tr(\{Z_\m,\{Z_\m,E_i\}\}E_i)-\tfrac4m\tr(Z_\m E_i)\tr(Z_\m E_j)\Big),
 \end{align*}
 using that $\{Z_\m,\cdot\,\}$ is a symmetric endomorphism in the last step. Next we contract over $(E_j)$ using \eqref{mmanticomm} and \eqref{tracefree},
 \begin{align*}
  Q(Z)&=-\i\sum_i\tr(\{Z_\m,\{Z_\m,E_i\}\}\{Z_\m,E_i\}_0)+\frac{4\i}{m}\sum_i\tr(\{Z_\m,E_i\}Z_\m)\tr(Z_\m E_i)\\
  &=-\i\sum_i\tr(\{Z_\m,\{Z_\m,E_i\}\}\{Z_\m,E_i\})+\frac{8\i}{m}\sum_i\tr(\{Z_\m,E_i\}Z_\m)\tr(Z_\m E_i)\\
  &=-\i\sum_i\tr(\{Z_\m,\{Z_\m,\{Z_\m,E_i\}\}\}E_i)-\frac{16\i}{m}\tr(Z_\m^3),
 \end{align*}
 and after an application of \eqref{aux1} and \eqref{eq:P12explicit}, we find
 \begin{align*}
  Q(Z)&=(\Cas^\m_{\C^m}-3\SW^\m_\m-\tfrac8m)P_{\m\m\m}(Z),
 \end{align*}
 which is the first assertion. The rest follows by applying Lemmas~\ref{cask} and \ref{sandwichconst} and Corollary~\ref{casm}.
\end{proof}

Before we calculate the constant $\lambda$, we identify a few relations between the polynomials $R_1,\ldots, R_4$ from Lemma~\ref{psipol}.

\begin{lem}
 $2R_1+R_2=0$ and $R_1=R_3+R_4$.
\end{lem}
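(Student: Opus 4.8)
The plan is to prove the two identities $2R_1+R_2=0$ and $R_1=R_3+R_4$ by manipulating the defining sums in Lemma~\ref{psipol}, using the polarized expressions \eqref{eq:P1pol}--\eqref{eq:P2pol} for the cubic form together with the structural facts \eqref{mmanticomm}--\eqref{mkanticomm} and the invariance properties of the tensor $\sigma$. Recall that $\sigma(X,Y,Z)=\i\tr(\{X,Y\}Z)$ is fully symmetric and that, on $\m\subset\g\subset\End\C^m$, it is $\ad(\k)$-invariant, so that for any $W\in\k$ and $X,Y,Z\in\m$ we have the Leibniz-type relation
\[
\sigma([W,X],Y,Z)+\sigma(X,[W,Y],Z)+\sigma(X,Y,[W,Z])=0.
\]
This identity, applied with $W=Z_\k$, is the main tool: it lets one move a bracket $\ad(Z_\k)$ from one slot of $\sigma$ to the others, which is exactly what relates $R_1,R_2$ to each other and $R_1$ to $R_3,R_4$.

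For the first identity, I would start from $R_1$, write it out using the cyclic symmetry of $\sigma$ in the two $\k$-bracketed factors, and then apply the $\ad(Z_\k)$-invariance relation to the factor $\sigma([Z_\k,E_i],E_k,E_l)$ in order to redistribute the bracket. One bracket lands back on a factor already carrying $[Z_\k,\cdot]$, producing (after relabeling the summation indices $i,j,k,l$ and using that $(E_i)$ is an orthonormal basis, so $\sum_i E_i\otimes E_i$ is $\ad(\k)$-invariant) a term proportional to $R_2$; the other lands on the $\sigma(Z_\m,E_i,E_j)$ factor, and here one uses that $[Z_\k,Z_\m]$ is again in $\m$ but that, after contracting over the full basis, this term vanishes or recombines — the precise bookkeeping being that the derivation applied to the invariant tensor $\sum_{i,j,k,l}\sigma(Z_\m,E_i,E_j)\sigma(\cdot,E_k,E_l)\sigma(\cdot,E_j,E_l)$ gives zero. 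Collecting the surviving pieces yields $2R_1=-R_2$. The analogous computation for $R_1=R_3+R_4$ proceeds by instead taking the double bracket $[Z_\k,[Z_\k,E_l]]$ appearing in $R_3$, expanding one of the two brackets via the invariance relation across the two factors $\sigma(E_i,E_k,E_l)$ and $\sigma(E_j,E_k,\cdot)$, and recognizing the resulting terms as $R_4$ and (after a second application, moving the remaining inner bracket onto $\sigma(Z_\m,E_i,E_j)$ and using symmetry to convert $[Z_\k,E_i]$-type terms into the $R_1$ shape) as $R_1$.

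The bookkeeping of indices and signs is the part requiring care: every application of the invariance relation generates three terms, and one must track which summation index each bracket acts on, use the symmetry of $\sigma$ to bring factors into a canonical order, and exploit that $\sum_i E_i\otimes E_i\in\Sym^2\m$ is $\ad(\k)$-invariant to relabel freely. I expect the main obstacle to be precisely this combinatorial management — in particular, verifying that the "unwanted" cross terms, in which a bracket $\ad(Z_\k)$ acts on the factor containing $Z_\m$, genuinely cancel. Conceptually these cancel because $\sigma$ and the basis sum are $\ad(\k)$-invariant while $Z_\k$ and $Z_\m$ are fixed vectors, so contracting a derivation against a fully $\ad(\k)$-invariant expression gives zero; concretely one should phrase this cancellation as: the map $\g\ni W\mapsto$ (the relevant quartic contraction with $W$ in a distinguished slot) is $\ad(\k)$-equivariant, hence its derivative along $\ad(Z_\k)$ vanishes when evaluated on $\k$-invariant data. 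Once that principle is isolated, both identities drop out after a short, if fiddly, manipulation. An alternative, more robust route — useful as a cross-check — is to note that by Corollary~\ref{constants} each $R_i$ is a fixed scalar multiple $\lambda_i$ of the one-dimensional generator $P_{\k\k\m}$, so it suffices to evaluate all four polynomials on a single well-chosen element $Z\in\g$ (for instance one with $Z_\m$ and $Z_\k$ supported on a small block) and check $2\lambda_1+\lambda_2=0$ and $\lambda_1=\lambda_3+\lambda_4$ numerically; this reduces the claim to a finite matrix computation but obscures the underlying reason.
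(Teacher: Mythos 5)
Your toolkit is the right one, and it is exactly what the paper uses: the $K$-invariance of $\sigma\in(\Sym^3\m^\ast)^K$, expressed as the sum-to-zero Leibniz identity, together with relabelling of orthonormal basis indices and the skew-symmetry of $\ad(Z_\k)|_\m$. However, your routing for the first identity is misdirected. Applying the Leibniz identity to the \emph{middle} factor $\sigma([Z_\k,E_i],E_k,E_l)$, i.e.\ writing it as $-\sigma(E_i,[Z_\k,E_k],E_l)-\sigma(E_i,E_k,[Z_\k,E_l])$ and then shifting the brackets through the contractions over $k$ resp.\ $l$ by skew-symmetry of $\ad(Z_\k)$, produces double-bracket and split-bracket terms: that is precisely the computation which yields $R_1=R_3+R_4$, not $2R_1+R_2=0$. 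You cannot reach $R_2$ this way, because $R_2$ differs from $R_1$ by having the second bracket on $E_j$, and $E_j$ does not occur in the middle factor at all. For the first identity you should instead apply the Leibniz identity to the arguments $(E_j,E_k,E_l)$ of the \emph{third} factor,
\[\sigma([Z_\k,E_j],E_k,E_l)+\sigma([Z_\k,E_k],E_j,E_l)+\sigma([Z_\k,E_l],E_j,E_k)=0,\]
and contract it against $\sigma(Z_\m,E_i,E_j)\sigma([Z_\k,E_i],E_k,E_l)$: the first term gives $R_2$, the second gives $R_1$, and the third also gives $R_1$ after swapping $k\leftrightarrow l$ (using that the middle factor is symmetric in $k,l$). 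In this correct routing no integration by parts is needed and no bracket ever lands on the $Z_\m$ slot, so the ``unwanted cross terms'' whose cancellation worried you simply do not arise.

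With that correction both identities drop out exactly as in the paper's proof, and your closing principle --- a derivation contracted against fully $\ad(\k)$-invariant data vanishes --- is a fine conceptual summary of why. Your fallback of evaluating all four polynomials on a single well-chosen $Z$ is also legitimate once Corollary~\ref{constants} is available, though as you say it obscures the mechanism.
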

\begin{proof}
 By the invariance of $\sigma\in(\Sym^3\m^\ast)^K$, we have
 \[\sigma([Z_\k,E_j],E_k,E_l)+\sigma([Z_\k,E_k],E_j,E_l)+\sigma([Z_\k,E_l],E_j,E_k)=0.\]
 Applying this to the expressions for $R_1$ and $R_2$ in Lemma~\ref{psipol} and using that
 \[\sum_{k,l}\sigma(\,\cdot\,,E_k,E_l)\sigma([Z_\k,E_k],E_j,E_l)=\sum_{k,l}\sigma(\,\cdot\,,E_k,E_l)\sigma([Z_\k,E_l],E_j,E_k),\]
 we find $2R_1+R_2=0$. Similarly, we have
 \[\sigma([Z_\k,E_i],E_k,E_l)=-\sigma(E_i,[Z_\k,E_k],E_l)-\sigma(E_i,E_k,[Z_\k,E_l]).\]
 Using the skew-symmetry of $\ad(Z_\k)$, we obtain
 \begin{align*}
  -\sum_{k,l}\sigma(E_i,[Z_\k,E_k],E_l)\sigma([Z_\k,E_k],E_j,E_l)&=\sigma(E_i,E_k,E_l)\sigma([Z_\k,[Z_\k,E_k]],E_j,E_l)\\
  &=\sigma(E_i,E_k,E_l)\sigma(E_j,E_k,[Z_\k,[Z_\k,E_l]]),\\
  -\sum_{k,l}\sigma(E_i,E_k,[Z_\k,E_l])\sigma([Z_\k,E_k],E_j,E_l)&=\sigma(E_i,E_k,E_l)\sigma(E_j,[Z_\k,E_k],[Z_\k,E_l]),
 \end{align*}
 which shows $R_1=R_3+R_4$.
\end{proof}

As a consequence, we find

\begin{kor}
\label{lambda13}
 $\lambda=8\lambda_1-4\lambda_3$.
\end{kor}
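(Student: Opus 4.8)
The plan is to combine the definition $R=2R_1-2R_2-2R_3+2R_4$ from Lemma~\ref{psipol} with the two relations $2R_1+R_2=0$ and $R_1=R_3+R_4$ just established, and then pass to the constants of proportionality via Corollary~\ref{constants}. First I would substitute $R_2=-2R_1$ into the formula for $R$, obtaining
\[R=2R_1-2(-2R_1)-2R_3+2R_4=6R_1-2R_3+2R_4.\]
Next I would use $R_1=R_3+R_4$ to eliminate $R_4$, writing $R_4=R_1-R_3$, so that
\[R=6R_1-2R_3+2(R_1-R_3)=8R_1-4R_3.\]
Since all of $R,R_1,R_3$ are proportional to the same one-dimensional generator $P_{\k\k\m}$ by Corollary~\ref{constants} — namely $R=\lambda P_{\k\k\m}$, $R_1=\lambda_1 P_{\k\k\m}$, $R_3=\lambda_3 P_{\k\k\m}$ — comparing coefficients of $P_{\k\k\m}$ in the identity $R=8R_1-4R_3$ yields $\lambda=8\lambda_1-4\lambda_3$, as claimed.

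This is an entirely formal manipulation, so there is essentially no obstacle beyond bookkeeping; the only point requiring a word of care is that the passage from the tensor identity $R=8R_1-4R_3$ to the scalar identity $\lambda=8\lambda_1-4\lambda_3$ is legitimate precisely because $(\m^\ast\otimes\Sym^2\k^\ast)^K$ is one-dimensional (Lemma~\ref{Sym3gK}), so that $P_{\k\k\m}\neq0$ and the proportionality constants are uniquely determined. One could alternatively eliminate $R_3$ in favor of $R_4$, giving $R=12R_1-8R_4$ and hence $\lambda=12\lambda_1-8\lambda_4$; either form is equivalent, but the stated one in terms of $\lambda_1$ and $\lambda_3$ is the one needed for the subsequent computation of $\lambda$.
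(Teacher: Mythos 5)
Your derivation is correct and is exactly the argument the paper intends: substitute $R_2=-2R_1$ and $R_4=R_1-R_3$ into $R=2R_1-2R_2-2R_3+2R_4$ to obtain $R=8R_1-4R_3$, then compare coefficients of the generator $P_{\k\k\m}$ of the one-dimensional space $(\m^\ast\otimes\Sym^2\k^\ast)^K$. (One small slip in your closing aside: eliminating $R_3$ via $R_3=R_1-R_4$ gives $R=4R_1+4R_4$, i.e.\ $\lambda=4\lambda_1+4\lambda_4$, not $12\lambda_1-8\lambda_4$; this does not affect your main argument.)
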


It remains then to just calculate $\lambda_1$ and $\lambda_3$.

\begin{lem}
\label{lambda1}
 \begin{align*}
  \lambda_1&=-\frac13(\Cas^\m_{\C^m}+\SW^\m_\k-\tfrac{4}{m})(\Cas^\m_{\C^m}+2\SW^\m_\k-\SW^\m_\m).
 \end{align*}
\end{lem}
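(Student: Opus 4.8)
The plan is to evaluate $R_1(Z)$ directly on a generic $Z\in\g$ and to read off the coefficient of $P_{\k\k\m}$, mirroring the proofs of Lemmas~\ref{aux} and \ref{kappa}; by Corollary~\ref{constants} one such evaluation determines $\lambda_1$. Substituting the explicit formula \eqref{sigma} for $\sigma$ into the definition of $R_1$ and using the Cartan relation $[\k,\m]\subseteq\m$ (so that $[Z_\k,E_i],[Z_\k,E_k]\in\m$) gives
\[R_1(Z)=-\i\sum_{i,j,k,l}\tr(\{Z_\m,E_i\}E_j)\,\tr(\{[Z_\k,E_i],E_k\}E_l)\,\tr(\{[Z_\k,E_k],E_j\}E_l).\]
Every anticommutator occurring here is that of two elements of $\m$ and therefore lies in $\i\m\oplus\R\Id$ by \eqref{mmanticomm}; this structural fact is what makes the basis contractions run, just as in Lemmas~\ref{aux}--\ref{kappa}.

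First I would contract over the two indices sitting in a terminal slot, $l$ and then $j$, using that $\sum_\bullet\tr(X E_\bullet)\,E_\bullet$ equals minus the $\i\m$-component of $X$, together with \eqref{tracefree} to rewrite that component as $X$ itself up to an explicit correction proportional to $\tfrac1m\tr(\,\cdot\,)\Id$; here self-adjointness of $\{\,\cdot\,,E_\bullet\}$ and skew-adjointness of $\ad(Z_\k)$ are used to slide a free index across a trace. After these two steps $R_1(Z)$ is a fixed linear combination of one principal double sum over $i,k$ and of correction terms weighted by $\tfrac1m$ and $\tfrac1{m^2}$.

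The heart of the argument is the contraction over $k$, and then $i$. Writing $[Z_\k,E_k]=Z_\k E_k-E_k Z_\k$ and collecting, each $k$-sum becomes a sandwich operator $\sum_k E_k X E_k=\SW^\m_{\End\C^m}(X)$, possibly decorated by $Z_\k$ on one or both sides, i.e.\ with $X$ replaced by $XZ_\k$, $Z_\k X$ or $Z_\k X Z_\k$, cf.~\eqref{partialbase}. To apply Lemma~\ref{sandwichconst} one first resolves the sandwiched matrix into its components in $\End\C^m=\C\Id\oplus\k\oplus\m\oplus\i\k\oplus\i\m$ (the relations \eqref{mmanticomm}, \eqref{kkanticomm}, \eqref{mkanticomm} telling which component each (anti)commutator lands in), whereupon $\SW^\m_{\End\C^m}$ acts on each by the scalar $\SW^\m_\m$, $\SW^\m_\k$ or $-\Cas^\m_{\C^m}$ as appropriate. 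Carried through, the principal sum collapses to a scalar multiple of $\sum_i\tr(\{Z_\m,E_i\}[Z_\k,[Z_\k,E_i]])$, which equation~\eqref{aux2} of Lemma~\ref{aux} evaluates as a multiple of $\tr(Z_\m Z_\k^2)$; the correction terms likewise reduce to multiples of $\tr(Z_\m Z_\k^2)$. Assembling all coefficients and comparing with $P_{\k\k\m}(Z)=6\i\tr(Z_\m Z_\k^2)$ (see \eqref{eq:P12explicit}) gives $R_1=\lambda_1 P_{\k\k\m}$ with $\lambda_1$ a product of two linear forms in the Casimir and sandwich constants; the factor $\Cas^\m_{\C^m}+2\SW^\m_\k-\SW^\m_\m$ is exactly the one forced by \eqref{aux2}. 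Substituting the numerical values from Lemmas~\ref{cask}, \ref{sandwichconst} and Corollary~\ref{casm} then yields the case-by-case expressions, as in Lemma~\ref{kappa}.

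The step I expect to be the main obstacle is the $Z_\k$-decorated contraction over $k$: one must keep several sandwich sums apart, correctly decompose each intermediate matrix into its $\k$-isotypic pieces (the sandwich constant depending on whether the piece sits in $\k$, $\m$ or $\C\Id$), and then verify that the many $\tfrac1m$-corrections generated by \eqref{tracefree} along the way recombine precisely into the $-\tfrac4m$ appearing in the first factor $\Cas^\m_{\C^m}+\SW^\m_\k-\tfrac4m$. A useful partial check is that this first factor must come out of the trace corrections alone (it is not produced by \eqref{aux2}), and that the resulting $\lambda_1$, combined with $\lambda_3$ through Corollary~\ref{lambda13}, has to reproduce the constant $\lambda$ of Corollary~\ref{obstrequiv}.
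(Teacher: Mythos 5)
Your proposal is correct and follows essentially the same route as the paper: write out $R_1(Z)$ via \eqref{sigma}, contract the terminal indices using \eqref{mmanticomm} and \eqref{tracefree}, reduce the remaining sums to partial sandwich and Casimir constants via \eqref{partialbase} and Lemma~\ref{sandwichconst}, and finish with \eqref{aux2} and \eqref{eq:P12explicit}. The only difference is the order of contractions (the paper contracts $l$, then $k$, then $j$), which does not change the method; your identification of which factor comes from \eqref{aux2} and which from the trace corrections matches the paper's computation.
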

\begin{proof}
 Fix $Z\in\g$. By definition,
 \begin{align*}
  R_1(Z)&=-\i\sum_{i,j,k,l}\tr(\{Z_\m,E_i\}E_j)\tr(\{[Z_\k,E_i],E_k\}E_l)\tr(\{[Z_\k,E_k],E_j\}E_l).
 \end{align*}
 By \eqref{mmanticomm}, $\{[Z_\k,E_i],E_k\}\in\i\m\oplus\R\Id$. Contracting over $(E_l)$ and applying \eqref{tracefree}, we obtain
 \begin{align*}
  R_1(Z)&=\i\sum_{i,j,k}\tr(\{Z_\m,E_i\}E_j)\tr(\{[Z_\k,E_i],E_k\}\{[Z_\k,E_k],E_j\}_0)\\
  &=\i\sum_{i,j,k}\tr(\{Z_\m,E_i\}E_j)\Big(\tr(\{[Z_\k,E_i],E_k\}\{[Z_\k,E_k],E_j\})\\
  &\qquad\qquad\qquad\qquad\qquad-\frac{2}{m}\tr(\{[Z_\k,E_i],E_k\})\tr([Z_\k,E_k]E_j)\Big).
 \end{align*}
 Multiplying this out, we find
 \begin{align*}
  R_1(Z)&=\i\sum_{i,j,k}\tr(\{Z_\m,E_i\}E_j)\tr\Big([Z_\k,E_i]\Big(-E_k^2AE_j+E_jAE_k^2+[E_kAE_k,E_j]\\
  &\qquad\qquad\qquad\qquad\qquad\qquad\qquad\qquad+[Z_\k,E_kE_jE_k]-E_k[Z_\k,E_j]E_k\Big)\Big)\\
  &\quad-\frac{4\i}{m}\sum_{i,j,k}\tr(\{Z_\m,E_i\}E_j)\tr([Z_\k,E_i]E_k)\tr([Z_\k,E_k]E_j).
 \end{align*}
 In the first sum, we use \eqref{partialbase}, while in the second sum, we contract over the basis $(E_k)$, noting that $[Z_\k,E_i]\in\m$:
 \begin{align*}
  R_1(Z)&=\i\sum_{i,j}\tr(\{Z_\m,E_i\}E_j)\tr((\Cas^\m_{\C^m}+\SW^\m_\k)[Z_\k,E_i][Z_\k,E_j])\\
  &\quad+\tfrac{4\i}{m}\sum_{i,j}\tr(\{Z_\m,E_i\}E_j)\tr([Z_\k,[Z_\k,E_i]]E_j).
 \end{align*}
 Using the skew-symmetry of $\ad(Z_\k)$, and contracting over $(E_j)$, this simplifies to
 \begin{align*}
  R_1(Z)&=\i\left(\Cas^\m_{\C^m}+\SW^\m_\k-\tfrac{4}{m}\right)\sum_{i}\tr(\{Z_\m,E_i\}[Z_\k,[Z_\k,E_i]]).
 \end{align*}
 With \eqref{aux2} and \eqref{eq:P12explicit} we obtain
 \begin{align*}
  R_1(Z)&=-\tfrac13\left(\Cas^\m_{\C^m}+\SW^\m_\k-\tfrac{4}{m}\right)(\Cas^\m_{\C^m}+2\SW^\m_\k-\SW^\m_\m)P_{\k\k\m}(Z),
 \end{align*}
 which is the desired statement.
\end{proof}

\begin{lem}
\label{lambda3}
 \begin{align*}
  \lambda_3&=-\frac13\left((\Cas^\m_{\C^m})^2+3\Cas^\m_{\C^m}\SW^\m_\k-3\Cas^\m_{\C^m}\SW^\m_\m-2(\SW^\m_\k)^2-\SW^\m_\k \SW^\m_\m+2(\SW^\m_\m)^2\right)\\
  &\quad+\frac{1}{3m}\left(6\Cas^\m_{\C^m}+16\SW^\m_\k-10\SW^\m_\m\right).
 \end{align*}
\end{lem}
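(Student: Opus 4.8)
The plan is to compute the contraction $R_3$ directly, in exactly the same spirit as the proof of Lemma~\ref{lambda1}: rewrite $R_3(Z)$ in terms of trace forms via $\sigma(X,Y,Z)=\i\tr(\{X,Y\}Z)$, then collapse the sums over the orthonormal basis $(E_i)$ of $\m$ one index at a time, each time invoking \eqref{mmanticomm} together with the trace-free projection formula \eqref{tracefree}, and replacing the resulting sandwiched sums $\sum_iE_iAE_i$ and $\sum_iE_i^2$ by the partial sandwich and Casimir constants through \eqref{partialbase}. At the end everything should reduce to a multiple of the single scalar $\sum_i\tr(\{Z_\m,E_i\}[Z_\k,[Z_\k,E_i]])$, which by \eqref{aux2} and \eqref{eq:P12explicit} is proportional to $P_{\k\k\m}(Z)$; the constant of proportionality is then $\lambda_3$ by Corollary~\ref{constants}.

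Concretely, I would start from
\[R_3(Z)=-\i\sum_{i,j,k,l}\tr(\{Z_\m,E_i\}E_j)\,\tr(\{E_i,E_k\}E_l)\,\tr(\{E_j,E_k\}[Z_\k,[Z_\k,E_l]]).\]
Since $\{E_i,E_k\}\in\i\m\oplus\R\Id$ by \eqref{mmanticomm} and $[Z_\k,[Z_\k,\cdot\,]]$ annihilates $\R\Id$ (and preserves $\m$, as $[\k,\m]\subset\m$), contracting over $(E_l)$ feeds $\{E_i,E_k\}$ directly into the double bracket. Expanding $\delta_{Z_\k}^2(\{E_i,E_k\})$ via the Leibniz rule for the derivation $\delta_{Z_\k}=\ad(Z_\k)$, multiplying by $\{E_j,E_k\}$, and using cyclicity of the trace, one obtains a sum of products of two traces in which $E_k$ occurs twice; contracting over $(E_k)$ then produces, by \eqref{partialbase}, factors of $\Cas^\m_{\C^m}$, $\SW^\m_\k$ or $\SW^\m_\m$ according to whether the matrix being sandwiched lies in $\k$ or in $\m$, plus $\tfrac1m$-correction terms from \eqref{tracefree}. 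After using the skew-symmetry of $\ad(Z_\k)$ to integrate by parts and reorganise the sandwiched sums, and using that $\{Z_\m,\cdot\,\}$ is a symmetric endomorphism, one contracts over $(E_j)$ and finally over $(E_i)$, at which point the reduction via \eqref{aux2} and \eqref{eq:P12explicit} applies and $\lambda_3$ can be read off. (The per-case values, if wanted, then follow by substituting Lemmas~\ref{cask} and \ref{sandwichconst} and Corollary~\ref{casm}.)

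The main obstacle is purely bookkeeping: compared with $R_1$, the double bracket $[Z_\k,[Z_\k,E_l]]$ means that after the first contraction one must track several different sandwiched configurations simultaneously, and one must carefully distinguish at each step whether the sandwiched matrix sits in $\k$ or in $\m$ — this is precisely what produces the quadratic combinations $(\Cas^\m_{\C^m})^2$, $\Cas^\m_{\C^m}\SW^\m_\k$, $\Cas^\m_{\C^m}\SW^\m_\m$, $(\SW^\m_\k)^2$, $\SW^\m_\k\SW^\m_\m$, $(\SW^\m_\m)^2$ in the answer — while keeping track of all the $\tfrac1m$-corrections generated by repeated use of \eqref{tracefree}. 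A good consistency check is the relation $R_1=R_3+R_4$ established above (equivalently Corollary~\ref{lambda13}): computing $R_4$ by the identical method and verifying $\lambda_1=\lambda_3+\lambda_4$ confirms the coefficients.
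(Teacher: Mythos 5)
Your strategy is exactly the paper's: the proof of Lemma~\ref{lambda3} proceeds by precisely this direct contraction of $R_3$, collapsing one basis index at a time via \eqref{mmanticomm}, \eqref{tracefree} and \eqref{partialbase} and finishing with \eqref{aux2} and \eqref{eq:P12explicit} (the paper happens to contract over $(E_k)$ first rather than $(E_l)$, which is immaterial). Note, however, that the entire content of the lemma is the resulting coefficient and your write-up stops at the plan: in the actual computation the final stage produces not one but three distinct contractions over $(E_i)$, namely $\sum_i\tr(Z_\k^2E_i\{Z_\m,E_i\}_0)$, $\sum_i\tr(Z_\k E_iZ_\k\{Z_\m,E_i\}_0)$ and $\sum_i\tr(\{Z_\m,E_i\}[Z_\k,[Z_\k,E_i]])$, each of which must be evaluated separately as a multiple of $\tr(Z_\m Z_\k^2)$, so the bookkeeping (for which your proposed consistency check $\lambda_1=\lambda_3+\lambda_4$ is indeed a sensible safeguard) still has to be carried out before the stated value of $\lambda_3$ is established.
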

\begin{proof}
 For any $Z\in\g$, by definition
 \begin{align*}
  R_3(Z)&=-\i\sum_{i,j,k,l}\tr(\{Z_\m,E_i\}E_j)\tr(\{E_i,E_l\}E_k)\tr(\{E_j,E_k\},[Z_\k,[Z_\k,E_l]]).
 \end{align*}
 We contract over $(E_k)$, noting that $\{E_i,E_l\}\in\i\m\oplus\R\Id$ by \eqref{mmanticomm}, and use \eqref{tracefree}:
 \begin{align*}
  R_3(Z)&=\i\sum_{i,j,l}\tr(\{Z_\m,E_i\}E_j)\tr(\{E_j,\{E_i,E_l\}_0\}[Z_\k,[Z_\k,E_l]])\\
  &=\i\sum_{i,j,l}\tr(\{Z_\m,E_i\}E_j)\Big(\tr(\{E_j,\{E_i,E_l\}\}[Z_\k,[Z_\k,E_l]])\\
  &\qquad\qquad\qquad\qquad\qquad\quad-\frac{2\i}{m}\tr(\{E_j,\Id\}[Z_\k,[Z_\k,E_l]])\tr(E_iE_l)\Big)
 \end{align*}
 Multiplying out, and using the symmetry in the indices $i\leftrightarrow j$, we obtain
 \begin{align*}
  R_3(Z)&=\i\sum_{i,j,l}\tr(\{Z_\m,E_i\}E_j)\tr(\{Z_\k^2,E_iE_j\}E_l^2-2\{Z_\k,E_iE_j\}E_lZ_\k E_l+2\{Z_\k^2,E_i\}E_lE_jE_l\\
  &\qquad\qquad\qquad\qquad\quad+2E_iE_jE_lZ_\k^2E_l-2Z_\k E_iE_l[E_j,Z_\k]E_l-2Z_\k E_iE_l\{E_j,Z_\k\}E_l)\\
  &\qquad-\frac{4\i}{m}\sum_{i,j,l}\tr(\{Z_\m,E_i\}E_j)\tr(E_j[Z_\k,[Z_\k,E_l]])\tr(E_iE_l).
 \end{align*}
 Noting that $[E_j,Z_\k]\in\m$, $Z_\k^2\in\i\m\oplus\R\Id$ by \eqref{kkanticomm} and $\{E_j,Z_\k\}\in\i\k$ by \eqref{mkanticomm}, we can use \eqref{partialbase} or contract $(E_l)$ to rewrite this as
 \begin{align*}
  R_3(Z)&=\i\sum_{i,j}\tr(\{Z_\m,E_i\}E_j)\tr(-\Cas^m_{\C^m}\{Z_\k^2,E_iE_j\}-2\SW^\m_\k\{Z_\k,E_iE_j\}Z_\k+2\SW^\m_\m\{Z_\k^2,E_i\}E_j\\
  &\qquad+2E_iE_j(\SW^\m_\m(Z_\k^2)_0-\tfrac{\tr(Z_\k^2)}{m}\Cas^\m_{\C^m}\Id)-2\SW^\m_\m Z_\k E_i[E_j,Z_\k]-2\SW^\m_\k Z_\k E_i\{E_j,Z_\k\})\\
  &\qquad+\frac{4\i}{m}\sum_{i,j}\tr(\{Z_\m,E_i\}E_j)\tr(E_j[Z_\k,[Z_\k,E_i]])\\
  &=\i\sum_{i,j}\tr(\{Z_\m,E_i\}E_j)\tr((-2\Cas^\m_{\C^m}+4\SW^\m_\m-6\SW^\m_\k)Z_\k^2E_iE_j+(2\SW^\m_\m-2\SW^\k_\k)Z_\k E_iZ_\k E_j\\
  &\qquad\qquad\qquad\qquad\qquad\quad-(\SW^\m_\m+\Cas^\m_{\C^m})\tfrac{2\tr(Z_\k^2)}{m}E_iE_j)\\
  &\qquad+\frac{4\i}{m}\sum_{i,j}\tr(\{Z_\m,E_i\}E_j)\tr(E_j[Z_\k,[Z_\k,E_i]]).\\
 \end{align*}
 Next, we contract over $(E_j)$ and use $\{Z_\m,E_i\}\in\i\m\oplus\R\Id$ together with
 \[\sum_i\tr(\{Z_\m,E_i\}E_i)=2\sum_i\tr(Z_\m E_i^2)=-2\Cas^\m_{\C^m}\tr Z_\m=0.\]
 This yields
 \begin{align*}
  R_3(Z)&=-\i\sum_i\tr((-2\Cas^\m_{\C^m}+4\SW^\m_\m-6\SW^\m_\k)Z_\k^2E_i\{Z_\m,E_i\}_0\\
  &\qquad\qquad\qquad\qquad+(2\SW^\m_\m-2\SW^\k_\k)Z_\k E_iZ_\k\{Z_\m,E_i\}_0)\\
  &\qquad-\frac{4\i}{m}\sum_i\tr(\{Z_\m,E_i\}[Z_\k,[Z_\k,E_i]]).
 \end{align*}
 For the individual terms, we use \eqref{tracefree} and \eqref{partialbase}:
 \begin{align*}
  \sum_i\tr(Z_\k^2E_i\{Z_\m,E_i\}_0)&=\sum_i\tr(Z_\k^2E_iZ_\m E_i+Z_\k^2E_i^2Z_\m-\tfrac{2}{m}\tr(Z_\m E_i)Z_\k^2E_i)\\
  &=(-\Cas^\m_{\C^m}+\SW^\m_\m+\tfrac{2}{m})\tr(Z_\m Z_\k^2),\\
  \sum_i\tr(Z_\k E_iZ_\k\{Z_\m,E_i\}_0)&=\sum_i\tr(Z_\k Z_\m E_iZ_\k E_i+Z_\k E_iZ_\k E_iZ_\m-\tfrac{2}{m}\tr(Z_\m E_i)Z_\k E_iZ_\k)\\
  &=(2S^\m_\k+\tfrac{2}{m})\tr(Z_\m Z_\k^2)
 \end{align*}
 Together with these and \eqref{aux2}, we finally obtain
 \begin{align*}
  R_3(Z)&=\Big((-2\Cas^\m_{\C^m}+4\SW^\m_\m-6\SW^\m_\k)(-\Cas^\m_{\C^m}+\SW^\m_\m+\tfrac{2}{m})+(2\SW^\m_\m-2\SW^\k_\k)(2S^\m_\k+\tfrac{2}{m})\\
  &\qquad\qquad-\tfrac{8}{m}(\Cas^\m_{\C^m}+2\SW^\m_\k-\SW^\m_\m)\Big)(-\i)\tr(Z_\m Z_\k^2)
 \end{align*}
 Expanding and applying \eqref{eq:P12explicit} finishes the proof.
\end{proof}

Combining Lemmas~\ref{lambda13}, \ref{lambda1} and \ref{lambda3}, as well as substituting in the Casimir and sandwich constants from Lemmas~\ref{cask} and \ref{sandwichconst} and Corollary~\ref{casm}, we obtain the following expressions.

\begin{kor}
\label{lambda}
 \begin{align*}
  \lambda&=-\frac13(4(\Cas^\m_{\C^m})^2+12\Cas^\m_{\C^m}\SW^\m_\k+4\Cas^\m_{\C^m}\SW^\m_\m+24(\SW^\m_\k)^2-4\SW^\m_\k \SW^\m_\m-8(\SW^\m_\m)^2)\\
  &\quad+\frac{8}{3m}(\Cas^\m_{\C^m}+\SW^\m_\m)\\
  &=\begin{cases}
   -(n+4)(n^2+8)/(3n)&\text{for}\quad\k=\so(n),\\
   -4(n-2)(n^2+2)/(3n)&\text{for}\quad\k=\sp(n).
  \end{cases}
 \end{align*}
\end{kor}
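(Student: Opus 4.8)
The plan is to assemble Corollary~\ref{lambda} from the three preceding results and then specialize to the two cases. First I would invoke Corollary~\ref{lambda13}, which reduces everything to $\lambda_1$ and $\lambda_3$ via $\lambda = 8\lambda_1 - 4\lambda_3$. Substituting the closed form for $\lambda_1$ from Lemma~\ref{lambda1} and for $\lambda_3$ from Lemma~\ref{lambda3}, the right-hand side becomes an explicit expression in the three quantities $\Cas^\m_{\C^m}$, $\SW^\m_\k$, $\SW^\m_\m$, together with terms linear in $1/m$. Concretely, I would expand the product $(\Cas^\m_{\C^m}+\SW^\m_\k-\tfrac4m)(\Cas^\m_{\C^m}+2\SW^\m_\k-\SW^\m_\m)$ occurring in $8\lambda_1$, add the (already expanded) expression $-4\lambda_3$, and collect the nine monomial types: $(\Cas^\m_{\C^m})^2$, $\Cas^\m_{\C^m}\SW^\m_\k$, $\Cas^\m_{\C^m}\SW^\m_\m$, $(\SW^\m_\k)^2$, $\SW^\m_\k\SW^\m_\m$, $(\SW^\m_\m)^2$, and the three terms $\tfrac1m\Cas^\m_{\C^m}$, $\tfrac1m\SW^\m_\k$, $\tfrac1m\SW^\m_\m$. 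This collection yields the first displayed formula; in particular the coefficient of $\tfrac1m\SW^\m_\k$ should cancel, which is why it does not appear.

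For the second half I would substitute the explicit values recorded in Lemma~\ref{cask}, Corollary~\ref{casm} and Lemma~\ref{sandwichconst}. In the orthogonal case $m=n$, $\Cas^\m_{\C^m}=\tfrac{(n+2)(n-1)}{2n}$, $\SW^\m_\k=\tfrac{n+2}{2n}$, $\SW^\m_\m=\tfrac{2-n}{2n}$; in the symplectic case $m=2n$, $\Cas^\m_{\C^m}=\tfrac{(2n+1)(n-1)}{2n}$, $\SW^\m_\k=\tfrac{1-n}{2n}$, $\SW^\m_\m=\tfrac{n+1}{2n}$. Inserting these, clearing denominators (all of $\Cas^\m_{\C^m},\SW^\m_\k,\SW^\m_\m$ carry denominator $2n$, so the quadratic part has denominator $4n^2$ and the $\tfrac1m$-part denominator $2n^2$, up to the overall factor $\tfrac13$), and simplifying the resulting rational function of $n$ should collapse to $-\tfrac{(n+4)(n^2+8)}{3n}$ and $-\tfrac{4(n-2)(n^2+2)}{3n}$ respectively. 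A useful consistency check along the way is the identity $|P_{\m\m\m}|^2+|P_{\k\k\m}|^2=|P|^2$ from Proposition~\ref{p0norm} and, further downstream, coherence with Corollary~\ref{obstrequiv}.

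The only genuine obstacle is bookkeeping: one must expand two polynomial expressions in three indeterminates without sign errors, then carry out a polynomial simplification in $n$ with denominators. I expect the $\so(n)$ and $\sp(n)$ substitutions to be the most error-prone step, so I would first write out and double-check the symbolic formula (the first displayed line), test it numerically for one small value of $n$ as a sanity check, and only then streamline it into the two closed forms. No new conceptual ingredient is required beyond Corollary~\ref{lambda13}, Lemmas~\ref{lambda1} and~\ref{lambda3}, and the constants already tabulated in Lemmas~\ref{cask} and~\ref{sandwichconst} and Corollary~\ref{casm}.
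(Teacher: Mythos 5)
Your proposal is correct and follows exactly the paper's route: the corollary is obtained by combining $\lambda=8\lambda_1-4\lambda_3$ from Corollary~\ref{lambda13} with the expressions in Lemmas~\ref{lambda1} and~\ref{lambda3}, then substituting the constants from Lemmas~\ref{cask} and~\ref{sandwichconst} and Corollary~\ref{casm}. The expansion and cancellations you describe (including the vanishing of the $\tfrac1m\SW^\m_\k$ term) check out, so this is just the bookkeeping the paper itself performs.
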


According to Corollary~\ref{orthtoP0}, we may now combine Corollary~\ref{pnorms}, Lemma~\ref{kappa} and Corollary~\ref{lambda} to assemble the obstruction integral.

\begin{kor}
\label{psiandp}
Koiso's obstruction integral is given by the following multiple of $P$.
 \begin{align*}
  \Psi&=\frac{\langle 3R-EQ,P\rangle}{|P|^2}P=\frac{-E\kappa|P_{\m\m\m}|^2+3\lambda|P_{\k\k\m}|^2}{|P|^2}P\\
  &=\begin{cases}
   -\dfrac{(n+4)(7n^2+4n+24)}{8(n+1)}P&\text{for}\quad\k=\so(n),\\
   -\dfrac{(n-2)(7n^2-2n+6)}{(2n-1)}P&\text{for}\quad\k=\sp(n).
  \end{cases}
 \end{align*}
\end{kor}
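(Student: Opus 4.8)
The statement collects results that are already in hand, so the plan is simply to determine the scalar by which $P$ gets multiplied. By Lemma~\ref{psipol}, $\Psi(h_Z)$ is the $G$-average of the polynomial $3R-EQ$ along the adjoint action; for a suitably normalized Haar measure this average is exactly the orthogonal projection of $3R-EQ$ onto the subspace of $G$-invariant cubics with respect to the inner product \eqref{eq:inprod}. As recorded in~\ref{sec:suncubic}, for $G=\SU(m)$ with $m\geq3$ that subspace is one-dimensional and spanned by $P$, so $\Psi=cP$ with $c\in\R$; pairing both sides with $P$ and using that an orthogonal projection is self-adjoint and fixes $P$ gives $c=\langle 3R-EQ,P\rangle/|P|^2$. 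This is the first displayed identity, and it makes Corollary~\ref{orthtoP0} quantitative.

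For the second equality I would expand $P=P_{\m\m\m}+P_{\k\k\m}$ from Lemma~\ref{Sym3gK} and substitute $Q=\kappa P_{\m\m\m}$, $R=\lambda P_{\k\k\m}$ from Corollary~\ref{constants}. Since the Cartan decomposition $\g=\k\oplus\m$ is orthogonal for minus the trace form, the summands $(\Sym^3\m^\ast)^K$ and $(\m^\ast\otimes\Sym^2\k^\ast)^K$ of $\Sym^3\g^\ast$ are orthogonal, so $P_{\m\m\m}\perp P_{\k\k\m}$ and the cross terms drop:
\[\langle 3R-EQ,P\rangle=\langle 3\lambda P_{\k\k\m}-E\kappa P_{\m\m\m},\,P_{\m\m\m}+P_{\k\k\m}\rangle=3\lambda|P_{\k\k\m}|^2-E\kappa|P_{\m\m\m}|^2.\]

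It then remains to substitute the explicit constants: $E=m$, hence $E=n$ for $\k=\so(n)$ and $E=2n$ for $\k=\sp(n)$; the values of $\kappa$ and $\lambda$ from Lemma~\ref{kappa} and Corollary~\ref{lambda}; the norms $|P_{\m\m\m}|^2$ and $|P_{\k\k\m}|^2$ from Corollary~\ref{pnorms}; and $|P|^2$ from Proposition~\ref{p0norm} at the relevant $m$. In both cases the numerator $-E\kappa|P_{\m\m\m}|^2+3\lambda|P_{\k\k\m}|^2$ carries a common factor $(n+4)(n+2)(n-1)(n-2)$ for $\k=\so(n)$, respectively $(2n+1)(n+1)(n-1)(n-2)$ for $\k=\sp(n)$, while the residual bracket collapses to $7n^2+4n+24$, respectively $7n^2-2n+6$; dividing by $|P|^2$ then cancels the remaining factors down to the quoted rational functions. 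The norm inputs can be sanity-checked against $|P_{\m\m\m}|^2+|P_{\k\k\m}|^2=|P|^2$.

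There is no real conceptual hurdle left, since the hard Casimir and sandwich identities were already established in the preceding lemmas. The two points that need a little care are: making the averaging-equals-projection step precise, i.e.\ verifying that the Haar-measure/volume normalization contributes no spurious positive factor to $\Psi=cP$ (the formula as stated corresponds to the unit-volume convention); and carrying out the final rational-function simplification without arithmetic slips, which is most safely done by clearing denominators and comparing the resulting polynomials in $n$ on the two sides.
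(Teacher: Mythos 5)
Your proposal is correct and follows exactly the route the paper intends: averaging equals orthogonal projection onto the one-dimensional space $\R P$, the orthogonal splitting $P=P_{\m\m\m}+P_{\k\k\m}$ kills the cross terms, and the explicit constants from Lemmas~\ref{kappa}, \ref{pnorms}, \ref{p0norm} and Corollary~\ref{lambda} are substituted (your factorizations of the numerators and the resulting quotients check out). Your caveat about the Haar-measure normalization is also apt, since the paper's convention $\int_G dx=\int_M\vol_g$ means the stated formula holds up to the (irrelevant, positive) volume factor.
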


In particular, $\Psi$ does not vanish identically provided $n\geq3$.

\section{Rigidity and nonlinear instability}
\label{sec:Section6}

We are now ready to prove our main result.

\begin{thm}
\label{main}
 Let $n\geq3$, and let $(M=G/K,g)$ be one of the Riemannian symmetric spaces $\SU(n)/\SO(n)$ or $\SU(2n)/\Sp(n)$.
 \begin{enumerate}[\upshape(i)]
  \item The set of infinitesimal Einstein deformations in $\varepsilon(g)$ which are integrable to second order corresponds to the variety
  \[\quadric=\left\{X\in\g\,\middle|\,X^2=\frac{\tr(X^2)}{m}I_m\right\}\]
  under the $G$-equivariant isomorphism $\varepsilon(g)\cong\g=\su(m)$, where $m=n$ or $m=2n$.
  \item If $n$ is odd, all infinitesimal Einstein deformations of $\SU(n)/\SO(n)$ are obstructed to second order, and hence the metric $g$ is rigid.
 \end{enumerate}
\end{thm}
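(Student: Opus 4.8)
The plan is to combine the two structural results established in Sections~\ref{sec:Section4} and~\ref{sec:Section5}. By Corollary~\ref{psiandp}, Koiso's obstruction integral $\Psi$ is a nonzero scalar multiple of the invariant cubic $P(X)=2\i\tr(X^3)$ on $\g=\su(m)$ for every $n\geq3$; and by Proposition~\ref{critical}, the infinitesimal Einstein deformations integrable to second order are precisely the critical points of $\Psi$. Transporting everything through the isomorphism $\varepsilon(g)\cong\g$, part~(i) thus reduces to computing the critical locus of $P$ on $\su(m)$.

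For that I would differentiate directly: for $X,Y\in\su(m)$ the derivative of $t\mapsto P(X+tY)$ at $t=0$ equals $6\i\tr(X^2Y)=3P(X,X,Y)$, so $X$ is a critical point if and only if $\tr(X^2Y)=0$ for all $Y\in\su(m)$. Writing $X^2$ as the sum of its scalar part $\frac{\tr(X^2)}{m}I_m$ and its traceless Hermitian part $H_0\in\i\su(m)$, the tracelessness of $Y$ annihilates the scalar contribution, and nondegeneracy of the trace form on $\su(m)$ then forces $H_0=0$; that is, $X\in\quadric$. (Equivalently, one may simply quote the description of this critical locus in \cite[Prop.~6.4]{OddGrArxiv}.) This proves~(i).

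For~(ii) I would examine $\quadric$ spectrally in the case $m=n$ odd. An element $X\in\su(n)$ with $X^2=cI_n$, where $c=\tr(X^2)/n\leq0$, is normal and hence admits an orthonormal eigenbasis; its eigenvalues are purely imaginary and each squares to $c$, so $X$ has at most the two eigenvalues $\pm\i\sqrt{-c}$. If $c\neq0$ these are distinct, and $\tr X=0$ forces them to occur with equal multiplicity, whence $n$ is even --- a contradiction. Therefore $c=0$ and $X=0$, i.e.~$\quadric=\{0\}$. Together with~(i), no nonzero IED integrates to second order; by Koiso's structure theory for the moduli space of Einstein metrics (\cite{Koiso82}, cf.~\cite{besse}) this shows that $g$ is isolated, hence rigid.

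I expect the real difficulty to lie entirely upstream, in the chain of Casimir and sandwich computations culminating in Corollary~\ref{psiandp} --- in particular in confirming that the coefficient relating $\Psi$ and $P$ is nonzero for $n\geq3$. Granted that, the theorem is a short linear-algebra argument, whose only delicate points are the parity obstruction in~(ii) and the appeal to the standard fact that obstructedness of every infinitesimal deformation implies local rigidity.
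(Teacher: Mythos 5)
Your proposal is correct and follows essentially the same route as the paper's proof: combine Corollary~\ref{psiandp} (which shows $\Psi$ is a nonzero multiple of the cubic $P$ for $n\geq3$) with Proposition~\ref{critical}, then identify the integrable deformations with the critical locus of $P$ on $\su(m)$ and observe that this locus is $\quadric$, reducing to the origin when $m$ is odd. The only difference is that you work out the critical-locus description and the spectral parity argument directly, where the paper simply cites \cite[Prop.~6.4]{OddGrArxiv}; both of your computations are correct.
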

\begin{proof}
 We recollect the strategy so far. Let $h=h_Z\in\varepsilon(g)$ be the IED associated to an element $Z\in\g$ via the parallel $3$-tensor $\sigma$. Proposition~\ref{critical} states that $h$ is integrable to second order if and only if $h_Z$ is a critical point of the obstruction integral $\Psi$, which we know a priori to correspond to a multiple of the cubic form $P$ from \eqref{eq:suncubic} since $\dim(\Sym^3\g^\ast)^G=1$. By virtue of Corollary~\ref{psiandp}, $\Psi$ is never identically zero if $n\geq3$, and thus the critical points of $\Psi$ correspond to the critical points of $P$ under the identification $Z\mapsto h_Z$. The variety of critical points is precisely the set $\quadric$ above, and it consists of just the origin for $m$ odd \cite[Prop.~6.4]{OddGrArxiv}. Thus, in the case of odd $m$, none of the IED are integrable, which implies that the Einstein metric is rigid.
\end{proof}

Recall that an Einstein metric $g$ on a compact, oriented manifold $M$ is called \emph{stable for the Einstein--Hilbert action}, or just \emph{$\EH$-stable}, if it is a local maximum of $\EH$ restricted to the manifold $\CS_g$ of constant scalar curvature metrics with the same total volume as $g$. A necessary condition for this is \emph{linear semistability}, that is
\[\EH_g''(h,h)\leq 0\qquad\text{for all }h\in\TT(M),\]
but this condition is not sufficient in general. As already observed by Kröncke \cite{dynamical}, the nonvanishing of the third variation $\EH_g'''$ on $\varepsilon(g)=\ker\EH_g''\big|_{\TT(M)}$ is a sufficient criterion for $\EH$-instability. This is closely related to Koiso's obstruction integral:

\begin{lem}
\label{thirdvar}
 Let $(M,g)$ be a closed, orientable Einstein manifold. For any $h\in\varepsilon(g)$,
 \[\EH_g'''(h,h,h)=-\Ltwoinprod{\Ric_g''(h,h)}{h}=-\frac12\Psi(h).\]
\end{lem}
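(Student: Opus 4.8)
The plan is to compute the third variation of the Einstein--Hilbert functional along a curve tangent to an infinitesimal Einstein deformation, and to match it against the already-established expression \eqref{eq:psiexpr} for Koiso's obstruction integral $\Psi$. The identity has two parts: first the relation $\EH_g'''(h,h,h) = -\Ltwoinprod{\Ric_g''(h,h)}{h}$ expressing the third variation of $\EH$ in terms of the second variation of the Ricci tensor, and then the identification of $\Ltwoinprod{\Ric_g''(h,h)}{h}$ with $\tfrac12\Psi(h)$. I would treat these in turn.

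\textbf{Step 1: the third variation of $\EH$.} I would take a smooth family $g_t$ of metrics with $g_0 = g$, $\dot g_0 = h$, and $h \in \varepsilon(g)$ (so in particular $h$ is transverse trace-free and satisfies \eqref{eq:Lin_Ein}), and differentiate $\EH(g_t) = \int_M \scal_{g_t}\,\vol_{g_t}$ three times at $t=0$. The standard first variation gives $\EH_g'(h) = \int_M \langle \tfrac{\scal_g}{2}g - \Ric_g, h\rangle\,\vol_g$, which vanishes for an Einstein metric restricted to the TT-directions since $\Ric_g = Eg$ and $\tr h = 0$. Expanding to third order, many terms contain factors of $\EH_g'$ or $\EH_g''$ evaluated on TT-tensors; the point is that along an IED the linearised Einstein operator vanishes, $\Eop_g'(h) = 0$, so $h$ lies in the kernel of the second variation on $\TT(M)$. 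After discarding the terms that vanish for this reason (and using $\vol'_{g}(h) = \tfrac12(\tr h)\vol_g = 0$, $\scal'_g(h) = \delta\delta h - \Delta \tr h - \langle \Ric, h\rangle = -E\tr h = 0$ in the TT-gauge), the surviving contribution is precisely $-\Ltwoinprod{\Ric_g''(h,h)}{h}$. This is essentially the computation already carried out by Kröncke \cite{dynamical}; I would cite it and include only the bookkeeping needed to see which terms survive.

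\textbf{Step 2: matching with $\Psi$.} For the second equality I would invoke the expression for $\Psi$ in terms of the second variation of the Einstein operator, $\Psi(h) = 2\Ltwoinprod{\Eop_g''(h,h)}{h}$, together with the fact that $\Eop(g) = \Ric(g) - \tfrac{1}{n}(\int_M \scal_g\,\vol_g)\,g$. Differentiating $\Eop$ twice, the correction term $-\tfrac1n(\int\scal)\,g$ contributes $-\tfrac1n$ times $\big(\int\scal\big)''(h,h)\,g + 2\big(\int\scal\big)'(h)\,h$, and when we pair this against $h$ and integrate over $M$, both pieces vanish because $h$ is trace-free (the $g$-term) and because $\big(\int\scal\big)'(h) = \EH_g'(h) = 0$ in the TT-gauge. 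Hence $\Ltwoinprod{\Eop_g''(h,h)}{h} = \Ltwoinprod{\Ric_g''(h,h)}{h}$, giving $\Psi(h) = 2\Ltwoinprod{\Ric_g''(h,h)}{h}$ and therefore $\EH_g'''(h,h,h) = -\Ltwoinprod{\Ric_g''(h,h)}{h} = -\tfrac12\Psi(h)$.

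\textbf{The main obstacle} will be carefully controlling the curve-dependence in Step 1: the third variation of a functional along a curve of metrics genuinely depends on the second derivative $\ddot g_0$ of the curve, not just on $h$, and one must argue that this dependence drops out. It does so exactly because $h \in \varepsilon(g)$ means $\Eop_g'(h) = 0$, so that the $\ddot g_0$-terms enter only through $\EH_g''(\ddot g_0, \cdot)$-type expressions paired against objects in the kernel of $\Eop_g'$, or through $\EH_g'(\ddot g_0) = 0$ by the Einstein and trace-free conditions. Making this cancellation transparent — rather than hidden inside a long coordinate computation — is the delicate point, and it is precisely the reason the statement is restricted to $h \in \varepsilon(g)$; I would phrase the argument to foreground this kernel property, and otherwise defer to the variational formulae collected in \cite{besse,dynamical,kroenckeeinstein}.
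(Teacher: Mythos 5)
Your proposal is correct and follows essentially the same route as the paper's proof, which likewise starts from the first variation formula $\EH_g'(h)=\Ltwoinprod{-\Ric_g+\frac{\scal_g}{2}g}{h}$ and discards the terms killed by the trace-free condition and by the fact that $h\in\varepsilon(g)$ annihilates $\EH_g'$ and $\EH_g''$ (the paper defers the remaining bookkeeping to the cited folklore account). Your Step 2, reducing $\Ltwoinprod{\Eop_g''(h,h)}{h}$ to $\Ltwoinprod{\Ric_g''(h,h)}{h}$ via trace-freeness and $\EH_g'(h)=0$, is the same observation the paper makes implicitly.
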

\begin{proof}
 This is essentially folklore; an account is given in \cite[\S3]{KS24}. The calculation is routine and rests on the fact \cite[Prop.~4.17]{besse} that
 \[\EH_g'(h)=\Ltwoinprod{-\Ric_g+\frac{\scal_g}{2}g}{h}\]
 for arbitrary metrics $g$ and variations $h\in\Sy^2(M)$; moreover, any IED $h\in\varepsilon(g)$ of an Einstein metric $g$ is trace-free, i.e.~$\langle g,h\rangle=0$, and annihilates the first and second variations $\EH_g'$, $\EH_g''$.
\end{proof}

\begin{thm}
\label{thm:Sunstable}
 The Riemannian symmetric spaces $\SU(n)/\SO(n)$ and $\SU(2n)/\Sp(n)$, $n\geq3$, are unstable as critical points of the Einstein--Hilbert action.
\end{thm}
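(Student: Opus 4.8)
The plan is to read the result straight off the explicit formula for Koiso's obstruction integral obtained in Corollary~\ref{psiandp}, via the link between $\Psi$ and the third variation of $\EH$. The key input is Kröncke's instability criterion \cite{dynamical}: a linearly semistable Einstein metric $g$ — and all the spaces in Proposition~\ref{symmetricied}, in particular $\SU(n)/\SO(n)$ and $\SU(2n)/\Sp(n)$, are known to be linearly semistable — fails to be a local maximum of $\EH$ on $\CS_g$ as soon as the cubic form $\EH_g'''$ does not vanish identically on $\varepsilon(g) = \ker(\EH_g''|_{\TT(M)})$. By Lemma~\ref{thirdvar} one has $\EH_g'''(h,h,h) = -\tfrac12\Psi(h)$ for every $h\in\varepsilon(g)$, so it is enough to exhibit a single infinitesimal Einstein deformation $h$ with $\Psi(h)\neq0$.

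Such an $h$ is supplied by Corollary~\ref{psiandp}. Under the isomorphism $\varepsilon(g)\cong\g=\su(m)$ (with $m=n$ or $m=2n$), the obstruction $\Psi$ equals a nonzero scalar multiple of the invariant cubic $P(X)=2\i\tr(X^3)$, the scalar being visibly nonzero for every $n\geq3$; and $P$ itself is not the zero polynomial — for instance it does not vanish on $X=\i\,\diag(m-1,-1,\ldots,-1)\in\su(m)$, for which $P(X)=2m(m-1)(m-2)\neq0$. Picking $Z\in\g$ with $P(Z)\neq0$ and letting $h:=h_Z\in\varepsilon(g)$ be the associated IED, Lemma~\ref{thirdvar} gives $\EH_g'''(h,h,h)\neq0$.

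From here, instability follows from \cite{dynamical} (see also \cite[\S3]{KS24}): since $\EH$ is diffeomorphism-invariant, its restriction to $\CS_g$ near $g$ is modelled by its restriction to a finite-dimensional slice, on which the Taylor expansion at $g$ opens with $\tfrac12\EH_g''(\,\cdot\,,\,\cdot\,)$ in degree two and $\tfrac16\EH_g'''(\,\cdot\,,\,\cdot\,,\,\cdot\,)$ in degree three. Because the quadratic term vanishes on $\varepsilon(g)$, the deformation $h$ above — replaced by $-h$ if necessary so that $\EH_g'''(h,h,h)>0$ — strictly increases $\EH$ along a curve through $g$ in $\CS_g$; hence $g$ is not $\EH$-stable, which is the assertion.

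Essentially all the substance of the theorem already lies in Corollary~\ref{psiandp}, so little is left to do. The only point that genuinely calls for care is the reduction to the slice and the accompanying third-order Taylor argument — in particular the fact that the degree-three term really is $\tfrac16\EH_g'''(h,h,h)$, with no leftover contribution from the second derivative of the chosen curve (this is where one uses that $\EH_g''$ is negative semidefinite on $\TT(M)$ with radical exactly $\varepsilon(g)$). For this I would simply appeal to the treatment in \cite{dynamical,KS24} rather than reproduce it.
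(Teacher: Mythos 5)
Your proposal is correct and follows essentially the same route as the paper: both deduce from Corollary~\ref{psiandp} that $\Psi\not\equiv0$, invoke Lemma~\ref{thirdvar} to get $\EH_g'''(h,h,h)\neq0$ for some $h\in\varepsilon(g)$ with $\EH_g'(h)=0$ and $\EH_g''(h,h)=0$, and conclude via Kröncke's criterion that $g$ is not a local maximum of $\EH$ on $\CS_g$. Your explicit witness $X=\i\,\diag(m-1,-1,\ldots,-1)$ with $P(X)=2m(m-1)(m-2)$ and the remarks on the slice reduction are correct but not needed beyond what the paper already records.
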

\begin{proof}
 From Corollary~\ref{psiandp} we know that in both cases, $\Psi$ is not identically zero; in particular, there exists $h\in\varepsilon(g)$ such that $\Psi(h)\neq0$. Now, since $g$ is Einstein and $h\in\varepsilon(g)$, we have $\EH'_g(h)=0$ and $\EH''_g(h,h)=0$, but $\EH_g'''(h,h,h)\neq0$ by Lemma~\ref{thirdvar}. Thus $g$ is not a local maximum of $\EH$ among constant scalar curvature metrics.
\end{proof}

\begin{bem}
\label{Sunstable2}
 For the bi-invariant metric on $\SU(n)$, $n\geq3$, instability for the Einstein--Hilbert action was shown in \cite{BHMW}. For the complex Grassmannians $\Gr_p(\C^{p+q})$, $p,q\geq2$, $\EH$-instability follows in the same fashion from \cite[Thm.~1.3]{OddGrArxiv}.
\end{bem}

The Einstein--Hilbert action $\EH$ is closely related to Perelman's $\nu$-entropy functional. Its critical points are precisely the shrinking gradient Ricci solitons, which include all Einstein metrics of positive scalar curvature as a subclass.

If $\bar g$ is a metric of constant scalar curvature and $d=\dim M$, then
\[\nu(\bar g)=\frac{2-d}{2}\log\Vol(M,\bar g)+\frac{d}{2}\log\EH(\bar g)+\frac{d}{2}(1-\log(2\pi d)),\]
see the proof of \cite[Thm.~8.3]{kroenckeeinstein}. This implies that at an Einstein metric $g$ of positive scalar curvature, which are critical points of both $\EH$ and $\nu$ restricted to fixed volume metrics, the second variations $\EH_g''$ and $\nu_g''$ coincide up to a (positive) factor on $\TT(M)\subset T_g\CS_g$. This has been utilized for example in the dynamical stability analysis for compact symmetric spaces \cite{caohe}.

Moreover it follows that the third variations $\EH_g'''$ and $\nu_g'''$ coincide up to a factor on $\varepsilon(g)=\ker\EH_g''\big|_{\TT(M)}$.

\begin{prop}[\cite{BHMW}, Thm.~D]
\label{thirdvarnu}
 Let $(M^n,g)$ be an Einstein manifold with Einstein constant $E>0$. For any $h\in\varepsilon(g)$,
 \[\nu'''_g(h,h,h)=-\frac{E^{d/2}}{4E(2\pi)^{d/2}}\Psi(h).\]
\end{prop}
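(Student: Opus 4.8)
The plan is to leverage the relation between Perelman's $\nu$ and the Einstein--Hilbert action $\EH$ on constant scalar curvature metrics, exactly along the lines of the paragraph preceding the statement, and then to pin down the proportionality constant. Write $d=\dim M$. Since $g$ is Einstein with $\Ric_g=Eg$, $E>0$, it is a gradient shrinking Ricci soliton with constant potential, hence a critical point of $\nu$ on the space of all metrics: $\nu'_g=0$. Moreover, the second variation formulas of Cao--Hamilton--Ilmanen, Cao--Zhu and Kröncke show that $\nu''_g$ respects the splitting of $\Sy^2(M)$ into transverse-traceless tensors, Lie derivatives of $g$, and pure-trace tensors, that it annihilates the latter two summands, and that on $\TT(M)$ it agrees up to a positive constant with $\EH''_g$, whose null space on $\TT(M)$ is $\varepsilon(g)$. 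Hence every $h\in\varepsilon(g)$ lies in the kernel of $\nu''_g$ (and of $\EH''_g$) as a bilinear form on $\Sy^2(M)$, and therefore, using the Taylor expansion of $\nu$ along a curve together with $\nu'_g=0$, the value $\nu'''_g(h,h,h)$ equals $\tfrac{d^3}{dt^3}\big|_{t=0}\nu(g_t)$ for \emph{any} curve $g_t$ with $\dot g_0=h$; the same is true for $\EH$.

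First I would choose the curve $g_t$ to consist of constant scalar curvature metrics of the same volume as $g$, with $\dot g_0=h$; this is possible because $\varepsilon(g)\subseteq\TT(M)\subseteq T_g\CS_g$. Along such a curve the quoted formula for $\nu$ on constant scalar curvature metrics has constant $\log\Vol$ term, so $\nu(g_t)=\mathrm{const}+\tfrac d2\log\EH(g_t)$; and $g$ is a critical point of $\EH$ restricted to $\CS_g$, so $\tfrac{d}{dt}\big|_0\EH(g_t)=0$. Differentiating the logarithm three times and using this vanishing of the first derivative gives
\[\nu'''_g(h,h,h)=\frac{d}{2\,\EH(g)}\,\EH'''_g(h,h,h).\]
By Lemma~\ref{thirdvar} we have $\EH'''_g(h,h,h)=-\tfrac12\Psi(h)$, and since $\scal_g\equiv dE$ one has $\EH(g)=dE\,\Vol(M,g)$, so
\[\nu'''_g(h,h,h)=-\frac{\Psi(h)}{4E\,\Vol(M,g)}.\]
To recover the stated form, I would rewrite $\Vol(M,g)$ in terms of $E$ and the universal constants in the $\nu$-functional: at $g$ the entropy is attained by a constant $f_0$ and $\tau_0=\tfrac1{2E}$, and the normalization constraint $(4\pi\tau_0)^{-d/2}\int_M e^{-f_0}\,\vol_g=1$ turns $\Vol(M,g)$ into a power of $2\pi/E$, yielding the coefficient $-\dfrac{E^{d/2}}{4E(2\pi)^{d/2}}$.

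The main obstacle is the first paragraph: one must argue carefully that $h\in\varepsilon(g)$ is a genuine null direction of $\nu''_g$ as a form on all of $\Sy^2(M)$, so that $\nu'''_g(h,h,h)$ does not see the second- and third-order terms of the curve $g_t$ and the reduction to a constant scalar curvature curve is legitimate; this is exactly the point that makes the assertion ``$\EH'''_g$ and $\nu'''_g$ coincide up to a factor on $\varepsilon(g)$'' precise, and it relies on the explicit structure of the second variation of $\nu$ at a positive Einstein metric. The rest --- keeping track of the factors of $d$, $E$, $2\pi$ and $\Vol(M,g)$ through the relation between $\EH$, $\Vol$ and Perelman's normalization --- is routine but must be handled with care to land on exactly the stated constant.
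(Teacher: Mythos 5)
Your overall strategy is exactly the one the paper itself sketches in the paragraphs preceding the statement (which is quoted from \cite{BHMW}, Thm.~D, rather than proved here): reduce to a fixed-volume curve of constant scalar curvature metrics, use the formula $\nu(\bar g)=\tfrac{2-d}{2}\log\Vol(M,\bar g)+\tfrac{d}{2}\log\EH(\bar g)+\mathrm{const}$, and invoke Lemma~\ref{thirdvar}. The reduction is legitimate for the reason you give: $\nu_g'=0$ at an Einstein metric, and $\nu_g''$ is block-diagonal with respect to the splitting of $\Sy^2(M)$ into $\TT(M)$, Lie derivatives and conformal directions, with $\nu_g''\big|_{\TT(M)}$ a positive multiple of $\EH_g''\big|_{\TT(M)}$; hence $h\in\varepsilon(g)$ annihilates $\nu_g''(h,\cdot)$ on all of $\Sy^2(M)$ and the higher-order terms of the chosen curve do not contribute to $\frac{d^3}{dt^3}\big|_0\nu(g_t)$.

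The gap is in the very last step. Your computation correctly yields
\[\nu'''_g(h,h,h)=\frac{d}{2\,\EH(g)}\,\EH'''_g(h,h,h)=-\frac{\Psi(h)}{4E\,\Vol(M,g)},\]
but the normalization constraint $(4\pi\tau_0)^{-d/2}\int_Me^{-f_0}\,\vol_g=1$ does \emph{not} ``turn $\Vol(M,g)$ into a power of $2\pi/E$'': for constant $f_0$ it reads $e^{-f_0}\Vol(M,g)=(4\pi\tau_0)^{d/2}$ and therefore determines $f_0$ in terms of the volume, not the volume itself. The volume of an Einstein manifold is not pinned down by its Einstein constant. Your expression and the stated one agree precisely when $\Vol(M,g)=(4\pi\tau_0)^{d/2}=(2\pi/E)^{d/2}$, i.e.\ when the minimizing potential is $f_0=0$; this is a genuine normalization hypothesis that is neither stated nor justified. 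Note that your $1/\Vol(M,g)$ prefactor is the one consistent with the $\tau/\Vol(M,g)$ prefactor in the Cao--Zhu second variation formula, so the issue lies in your conversion of the constant, not in the derivation preceding it. Either carry the factor $\Vol(M,g)$ explicitly, or state and justify the normalization under which the quoted coefficient is obtained. For the use made of the proposition in Section~\ref{sec:Section6} (only the non-vanishing of $\nu_g'''$ on $\varepsilon(g)$ matters, and the two coefficients are positive multiples of one another) the discrepancy is harmless, but as a proof of the stated identity the last step does not go through.
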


Performing the same argument as above with the $\nu$-entropy and Proposition~\ref{thirdvarnu}, we obtain that the symmetric metrics on $\SU(n)/\SO(n)$ and $\SU(2n)/\Sp(n)$, $n\geq3$, are not local maxima of the $\nu$-entropy. By \cite[Thm.~1.3]{dynamical}, this implies that they are \emph{dynamically unstable} in the sense of Sesum \cite{sesum}. Thus we have reproven:

\begin{kor}[\cite{caohe}, Thm.~4.3]
 The Riemannian symmetric spaces $\SU(n)/\SO(n)$ and $\SU(2n)/\Sp(n)$, $n\geq3$, are dynamically unstable as Ricci solitons.
\end{kor}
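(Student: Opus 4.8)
The plan is to deduce this from the nonvanishing of Koiso's obstruction $\Psi$ via the relationship between $\Psi$ and the third variation of Perelman's $\nu$-entropy, exactly parallel to the proof of Theorem~\ref{thm:Sunstable} but with $\EH$ replaced by $\nu$. The key inputs are already in place: Corollary~\ref{psiandp} shows that $\Psi$ is a nonzero multiple of the cubic invariant $P$ whenever $n\geq3$, and Proposition~\ref{thirdvarnu} expresses $\nu'''_g$ in terms of $\Psi$ at any positive Einstein metric. Since the symmetric metric $g$ on $M=\SU(n)/\SO(n)$ or $\SU(2n)/\Sp(n)$ has Einstein constant $E=m>0$, it is a critical point of $\nu$ restricted to the fixed-volume slice, and the present corollary amounts to exhibiting a destabilizing direction.

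First I would pick, using Corollary~\ref{psiandp}, an element $h\in\varepsilon(g)$ with $\Psi(h)\neq0$; this is possible precisely because $\Psi\not\equiv0$ for $n\geq3$. Since $E>0$, Proposition~\ref{thirdvarnu} then gives
\[\nu'''_g(h,h,h)=-\frac{E^{d/2}}{4E(2\pi)^{d/2}}\,\Psi(h)\neq0,\qquad d=\dim M.\]
Next I would recall that $\varepsilon(g)$ coincides with the null space $\ker\nu''_g\big|_{\TT(M)}$ of the second variation of $\nu$ on transverse trace-free tensors; this follows from linear semistability of $(M,g)$ together with the fact, recalled above, that $\nu''_g$ and $\EH''_g$ agree up to a positive factor on $\TT(M)$. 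Consequently, along the direction $h$ the $\nu$-functional has vanishing first and second variation but nonzero third variation, so $g$ is not a local maximum of $\nu$ among metrics of fixed volume.

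Finally I would invoke \cite[Thm.~1.3]{dynamical}: a compact positive Einstein metric that is not a local maximum of the $\nu$-entropy is dynamically unstable as a Ricci soliton in the sense of Sesum \cite{sesum}. Chaining these three steps establishes the claim for all $n\geq3$. Essentially all the analytic and representation-theoretic work is already done, namely the computation of $\Psi$ (Corollary~\ref{psiandp}) and the variational formula (Proposition~\ref{thirdvarnu}), so the only point requiring care is the passage from ``cubic term nonzero, hence not a local maximum of $\nu$'' to genuine dynamical instability, i.e.\ the existence of a nearby Ricci flow trajectory escaping the fixed point; this is supplied verbatim by \cite[Thm.~1.3]{dynamical}, and the remaining verification is just bookkeeping of constants and signs.
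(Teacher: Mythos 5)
Your proposal is correct and follows exactly the paper's argument: it chains Corollary~\ref{psiandp} (nonvanishing of $\Psi$), Proposition~\ref{thirdvarnu} (the formula for $\nu_g'''$ on $\varepsilon(g)$), the agreement of $\nu_g''$ and $\EH_g''$ up to a positive factor on $\TT(M)$, and finally \cite[Thm.~1.3]{dynamical} to pass from ``not a local maximum of $\nu$'' to dynamical instability. No gaps; this is the same route the paper takes.
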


\begin{bem}
 It had already been proven in \cite[Thm.~4.3]{caohe} that the spaces (i)--(iv) in Proposition~\ref{symmetricied} are \emph{linearly} unstable for the $\nu$-entropy, that is, $\nu''_g\not\leq0$ (and hence dynamically unstable). The linear instabilities arise from eigenfunctions of the Laplace--Beltrami operator and are independent of the behaviour of $\EH$; as mentioned above, all spaces in Koiso's list (i)--(v) are $\EH$-linearly semistable.

 That the complex Grassmannians $\Gr_p(\C^{p+q})$ are dynamically unstable under the Ricci flow had been shown by Hall--Murphy--Waldron for $1\leq p\neq q$ \cite{HMW}, the case $p=1$ (i.e.~$\CP^{q}$) going back to Kröncke \cite{kroenckeeinstein}. The (nonlinear) instabilities constructed in \cite{HMW,kroenckeeinstein} again originate from eigenfunctions of the Laplace--Beltrami operator, not from the Einstein--Hilbert action. This construction does not work for $p=q$ because a certain cubic form on $\g$ vanishes identically, cf.~\cite[\S6.2]{OddGrArxiv}. Thus the instability result for $p=q$ (Remark~\ref{Sunstable2}) is indeed completely new.

 The question of $\EH$-stability remains open only for the space $\rmE_6/\rmF_4$, where Koiso's obstruction integral $\Psi$ vanishes identically.
\end{bem}

\section*{Acknowledgments}

The second named author was supported by the Procope project no.~\textbf{48959TL}, by the ANR grant no.~\textbf{ANR-21-CE40-0017} and FAPESP project no.~\textbf{2024/08127-4}, the latter two under the BRIDGES collaboration.

The third named author acknowledges the support received by the Special Priority  Program \textbf{SPP 2026 Geometry at Infinity} funded by the Deutsche Forschungsgemeinschaft DFG, and was partly supported by the Procope project no.~\textbf{57650868} (Germany)/\allowbreak\textbf{48959TL} (France).

Finally, the authors would like to thank Klaus Kröncke and Gregor Weingart for helpful discussions.

\end{document}